\numberwithin{equation}{section}
\newtheorem{Theorem}{Theorem}[section]
\newtheorem{Corollary}[Theorem]{Corollary}
\newtheorem{Lemma}[Theorem]{Lemma}
\newtheorem{Proposition}[Theorem]{Proposition}
 { \theoremstyle{definition}
\newtheorem{Definition}[Theorem]{Definition}
\newtheorem{Remark}[Theorem]{Remark}
\newtheorem{Example}[Theorem]{Example}
 }
\newcommand{\tensor}[1]{\otimes_{\scriptscriptstyle{#1}}}
\newcommand{\Sf}[1]{\mathsf{#1}}
\newcommand{\fk}[1]{\mathfrak{#1}}
\newcommand{\lmod}[1]{{}_{#1}\Sf{Mod}}
\renewcommand{\hom}[3]{\mathrm{Hom}_{\Sscript{#1}}\left(#2, #3\right)}
\newcommand{\td}[1]{\widetilde{#1}}
\newcommand{\bd}[1]{\boldsymbol{#1}}
\newcommand{\bara}[1]{\overline{#1}}
\newcommand{\Rep}[1]{\mathbf{Rep}_{\Sscript{\Bbbk}}(#1)}
\newcommand{\Vect}{{\rm Vect}_{\Sscript{\Bbbk}}}
\newcommand{\vect}{{\rm vect}_{\Sscript{\Bbbk}}}
\newcommand{\tF}{\texttt{f}}
\newcommand{\phil}{\upphi_{\Sscript{*}}}
\newcommand{\Go}{\cG_{\Sscript{0}}}
\newcommand{\Ga}{\cG_{\Sscript{1}}}
\newcommand{\Ho}{\cH_{\Sscript{0}}}
\newcommand{\Ha}{\cH_{\Sscript{1}}}
\newcommand{\Ko}{\cK_{\Sscript{0}}}
\newcommand{\Ka}{\cK_{\Sscript{1}}}
\newcommand{\No}{\cN_{\Sscript{0}}}
\newcommand{\Na}{\cN_{\Sscript{1}}}
\newcommand{\Co}{\mathcal{C}_{\Sscript{0}}}
\newcommand{\Ca}{\mathcal{C}_{\Sscript{1}}}
\newcommand{\Do}{\cD_{\Sscript{0}}}
\newcommand{\Da}{\cD_{\Sscript{1}}}
\newcommand{\Fo}{F_{\Sscript{0}}}
\newcommand{\Fa}{F_{\Sscript{1}}}
\newcommand{\Vx}{\cV_{\Sscript{x}}}
\newcommand{\Ux}{\cU_{\Sscript{x}}}
\newcommand{\Wu}{\cW_{\Sscript{u}}}
\newcommand{\Vg}{\cV^{\Sscript{g}}}
\newcommand{\Umg}{\cU^{\Sscript{g}^{-1}}}
\newcommand{\Wh}{\cW^{\Sscript{h}}}
\newcommand{\We}{\cW^{\Sscript{e}}}
\newcommand{\Ker}[1]{{\rm Ker}(#1)}
\newcommand{\Starl}[1]{{{\rm Star}}^{\Sscript{l}}(#1)}
\newcommand{\Ad}[1]{{\bd{ad}}_{\Sscript{#1}}}
\newcommand{\Orbit}[2]{{{\rm Orb}}_{\Sscript{\mathcal{#2}}}^{\Sscript{l}}\big(
#1 \big)}
\newcommand{\mycircled}[2][none]{\tikz[baseline=(a.base)]\node[draw,circle,inner
sep=0.5pt, outer sep=0pt,fill=#1](a){\ensuremath #2\strut};
 }
\newcommand{\oO}{\mathscr{O}}
\newcommand{\uU}{\mathscr{U}}
\newcommand{\vV}{\mathscr{V}}
\newcommand{\cD}{{\mathcal D}}
\newcommand{\cE}{{\mathcal E}}
\newcommand{\cG}{{\mathcal G}}
\newcommand{\cH}{{\mathcal H}}
\newcommand{\cK}{{\mathcal K}}
\newcommand{\cN}{{\mathcal N}}
\newcommand{\cR}{{\mathcal R}}
\newcommand{\cU}{{\mathcal U}}
\newcommand{\cV}{{\mathcal V}}
\newcommand{\cW}{{\mathcal W}}
\newcommand{\cY}{{\mathcal Y}}
\newcommand{\Ni}{\mathcal{N}^{\Sscript{(i)}}}
\newcommand{\Hi}{\mathcal{H}^{\Sscript{(i)}}}
\newcommand{\Gx}{\mathcal{G}^{\Sscript{x}}}
\newcommand{\phix}{\upphi^{\Sscript{x}}}
\newcommand{\phixp}{\upphi^{\Sscript{x'}}}
\newcommand{\Sets}{\mathsf{Sets}}
\newcommand{\Gsets}{\cG\text{-}{\rm Sets}}
\newcommand{\Sscript}[1]{\scriptscriptstyle{#1}}
\newcommand{\due}[3]{{}_{{#2 }} {#1}_{{ #3}} }
\newcommand{\prolimit}[2]{\varprojlim_{#1}\left({#2}\right)}
\newcommand{\injlimit}[2]{\varinjlim_{#1}\left({#2}\right)}
\newcommand{\upg}{\uU^{\Sscript{\upphi}}(\cG)}
\newcommand{\pug}{{}^{\Sscript{\upphi}}\uU(\cG)}
\newcommand{\pls}{{}^{*}\upphi}
\newcommand{\phio}{\upphi_{\Sscript{0}}}
\newcommand{\phia}{\upphi_{\Sscript{1}}}
\newcommand{\Vgx}{\cV^{\Sscript{\cG}}_{\Sscript{x}}}
\newcommand{\Vgsg}{\cV^{\Sscript{\cG}}_{\Sscript{s(g)}}}
\newcommand{\Vgtg}{\cV^{\Sscript{\cG}}_{\Sscript{t(g)}}}
\newcommand*{\QEDA}{\hfill\ensuremath{\blacksquare}}%
\begin{document}

\allowdisplaybreaks

\newcommand{\arXivNumber}{1806.09327}

\renewcommand{\PaperNumber}{019}

\FirstPageHeading

\ShortArticleName{Linear Representations and Frobenius Morphisms of Groupoids}

\ArticleName{Linear Representations and Frobenius Morphisms\\ of Groupoids}

\Author{Juan Jes\'us BARBAR\'AN S\'ANCHEZ~$^\dag$ and Laiachi EL KAOUTIT~$^\ddag$}

\AuthorNameForHeading{J.J.~Barbar\'an S\'anchez and L.~El Kaoutit}

\Address{$^\dag$~Universidad de Granada, Departamento de \'{A}lgebra, Facultad de Educaci\'{o}n,\\
\hphantom{$^\dag$}~Econon\'ia y Tecnolog\'ia de Ceuta, Cortadura del Valle, s/n.~E-51001 Ceuta, Spain}
\EmailD{\href{mailto:barbaran@ugr.es}{barbaran@ugr.es}}
\URLaddressD{\url{http://www.ugr.es/~barbaran/}}

\Address{$^\ddag$~Universidad de Granada, Departamento de \'{A}lgebra and IEMath-Granada, \\
\hphantom{$^\dag$}~Facultad de Educaci\'{o}n, Econon\'ia y Tecnolog\'ia de Ceuta, Cortadura del Valle,\\
\hphantom{$^\dag$}~s/n.~E-51001 Ceuta, Spain}
\EmailD{\href{mailto:kaoutit@ugr.es}{kaoutit@ugr.es}}
\URLaddressD{\url{http://www.ugr.es/~kaoutit/}}

\ArticleDates{Received June 26, 2018, in final form February 22, 2019; Published online March 12, 2019}

\Abstract{Given a morphism of (small) groupoids with injective object map, we provide sufficient and necessary conditions under which the
induction and co-induction functors between the categories of linear representations are naturally isomorphic. A morphism with
this property is termed a \emph{Frobenius morphism of groupoids}. As a consequence, an extension by a subgroupoid is Frobenius if and
only if each fibre of the (left or right) pull-back biset has finitely many orbits. Our results extend and clarify the classical Frobenius reciprocity formulae in the theory of finite groups, and characterize Frobenius extension of algebras with enough orthogonal idempotents.}

\Keywords{Linear representations of groupoids; restriction, inductions and co-induction functors; groupoids-bisets; translation
groupoids; Frobenius extensions; Frobenius reciprocity formula}

\Classification{18B40, 20L05, 20L99; 18D10,16D90, 18D35}

\newcommand{\rJoin}{\rtimes}
\newcommand{\lJoin}{\ltimes}

\vspace{-2mm}

\section{Introduction}

In this section, we first explain the motivations behind this research and we give a general overview of the theory developed here. Secondly, we introduce the notations and conventions that are needed in order to give a detailed description of the main results obtained in this paper and to make this introduction self-contained as much as possible.

\subsection{Motivation and overview}\label{ssec:1} Either as abstract objects or as geometrical ones, groupoids appear in different branches of mathematics and mathematical physics: see for instance the brief surveys \cite{Brown:1987, Landsman:2005,Weinstein:1996}. It seems that the most common motivation for studying groupoids has its roots in the concept of symmetry and in the knowledge of its formalism. Apparently, groupoids do not only allow to consider symmetries coming from transformations of the object (i.e., algebraic and/or geometric automorphisms), but they also allow to deal with symmetries among the parts of the object.

As it was claimed in \cite{Guay/Hepburn:2009}, to find a proper generalization of the formal definition of symmetry, one doesn't need to consider the class of all groupoids: for that, the \emph{equivalence relation} and \emph{action groupoids} serve as an intermediate step. From an abstract point of view, \emph{equivalence relation groupoids} are too restrictive as they do not admit any non trivial isotropy group. In other words, there is no internal symmetry to be considered when these groupoids are employed. Concerning \emph{action groupoids} and their linear representations, where internal symmetries appear, it is noteworthy to mention that they have been manifested implicitly in several physical situations a long time ago. In terms of homogeneous vector bundles, the study of molecular vibration is, for instance, a situation where linear representations of action groupoids are exemplified (see \cite[Section~3.2, p.~97]{Sternberg:1994} for more details in the specific case of the space of motions of carbon tetrachloride and~\cite{Pysiak:2011} for others examples).

Let us explain with some details how this exemplification appears in a general situation. Assume a group $G$ is given together with a right $G$-set $M$ (see \cite{Bouc:2010} for the precise definition) and consider the associated action groupoid $(M\times G, M)$ as in Example \ref{exam:action} below. Then, any abstract homogeneous vector bundle $\pi\colon E \to M$ over $M$\footnote{In the aforementioned physical situation, $M$ is the finite set of four chlorine atoms at the vertices of a regular tetrahedron including the carbon atom located in the centre and each fibre of $E$ is the three-dimensional vector space, which describes the displacement of the atom from its equilibrium position. The acting group is $S_{\Sscript{4}}$, the symmetric group of four elements. The global sections of $E$ are functions that parametrize the displacements of the molecule in its whole shape.} leads to a linear representation on the action groupoid $(M\times G, M)$ given by the functor $\{x \mapsto E_{\Sscript{x}}\}_{\Sscript{x \in M}}$, where the vector spaces $E_x$'s are the fibres of~$(E,\pi)$. This bundle also leads to a morphism $(\pi \times G, \pi)\colon (E \times G, E) \to (M\times G, M)$ of action groupoids. Additionally, there is an equivalence of symmetric monoidal categories between the category of homogeneous vector bundles over the $G$-set $M$ and the category of linear representations of the groupoid $(M\times G, M)$\footnote{This perhaps suggests that certain spaces of motions could be better understood by appealing to the symmetric rigid monoidal category of linear representations of finite type over adequate groupoids.}. Furthermore, the functor of global sections can be identified with the induction functor attached to the canonical morphism of groupoids
$(M\times G, M) \to (G,\{*\})$ (here the group $G$ is considered as a groupoid with only one object~$\{*\}$, see Example~\ref{exam:GM} below). As we will see, in the groupoid context, the induction functor is related to the restriction functor via the (right) Frobenius reciprocity formula.

Frobenius reciprocity formula appears in the framework of finite groups under different forms (see for instance \cite[equation~(3.4), p.~109]{Sternberg:1994} or \cite[equation~(3.7), p.~111]{Sternberg:1994} and, e.g., \cite[Proposition~2.3.9]{Kowalski:2014}\footnote{We refer to \cite[Section~3.6, p.~128]{Sternberg:1994} for an application of this formula to Raman spectrum in quantum mechanics.}) and it has been extended to other classes as well, like locally compact groups \cite{Mackey:1953, MooreC:1962} or certain algebraic groups \cite{Howe:1971}. In the finite case, this formula compares the vector space dimensions of homomorphism spaces of linear representations over two different groups connected by a morphism of groups. In more conceptual terms, this amounts to say that for a given morphism of groups (not necessarily finite), the restriction
functor has the induction functor as right adjoint and the co-induction functor as a left adjoint. From a categorical point of view, these functors are well known constructions due to Kan and termed right and left Kan extensions, respectively~\cite{MacLane}. In the same direction, if both groups are finite and the connecting morphism is injective, then the induction and co-induction functors are naturally isomorphic and the resulting morphism between the group algebras produces a Frobenius extension of unital algebras \cite{Kadison:1999} (this result becomes in fact a direct consequence of our main theorem, see the forthcoming subsection).

Apart from the interest they generate in algebra, geometry and topology, Frobenius unital algebras are objects that deserve to be studied on their own. For instance, commutative Frobenius algebras over fields, like group algebras of finite abelian groups, play a prominent role in 2-dimensional topological quantum field theory, as it was corroborated in \cite{Kock:2003}.

So far, we have been dealing with situations where only finitely many objects were available. In other words, Frobenius unital algebras and groups (or finite bundles of these ones) are objects mainly built from categories with finitely many objects. Up to our knowledge, the general case of infinitely many objects is still unexplored in the literature. As an illustration, the Frobenius formulae for locally compact topological groupoids are far from being understood, since these formulae are not even explicitly computed for the case of abstract groupoids.

\looseness=-1 Our motivation is to introduce the main ideas that underpin techniques from the theory of linear representation of groupoids in relation with their non-unital algebras, which however admits enough orthogonal idempotents (see the subsequent subsection for the definition). Thus, this paper intends to set up, in a very elementary way, the basic tools to establish Frobenius formulae in the context of abstract groupoids and to employ these formulae to characterize Frobenius extensions of groupoids on the one hand and Frobenius extensions of their associated path algebras on the other, hoping in this way to fill in the lack that is present in the literature about this subject.

\vspace{-1.2mm}

\subsection{General notions and notations}\label{ssec:notation}

We fix some conventions that will be held all along this paper. If $\mathcal{C}$ is a small category (the class of objects is actually a set) and $\cD$ is any other category, then the symbol $[\mathcal{C}, \cD]$ stands for the category whose objects are functors and whose morphisms are natural transformations between these. Since $\mathcal{C}$ is a small category, the resulting category is in fact a Hom-set category, which means that the class of morphisms (or arrows) between any pair of objects forms a set and this set will be denoted by ${\rm Nat}(F,G)$ for any pair of functors~$F$,~$G$. We shall represent a functor $F \colon \mathcal{C} \to \cD$ between small Hom-set categories as a pair of maps $F=(\Fa,\Fo)$, where $\Fa\colon \Ca \to \Da$ and $\Fo\colon \Co \to \Do$ are the associated maps on the sets of arrows and objects, respectively. Given two objects $d,d' \in \cD$, we denote as usual by $\cD(d,d')$ the set of all arrows from $d$ to $d'$. Assume now that we have a functor $F\colon \mathcal{C} \to \cD$. Then by $\cD(d,\Fa(f))$ we denote the map which sends any arrow $p \in \cD(d, F_{\Sscript{0}}(s(f)))$ to the composition $\Fa(f)p \in \cD(d,F_{\Sscript{0}}(t(f)))$ (here $s(h)$ and $t(h)$ stand for the source and the target of a given arrow $h$). In this way, for each object $d \in \cD$ we have a functor $\cD(d,F(-))$ from $\mathcal{C}$ to the category of sets. Similarly, for each object $d' \in \cD$, we have the functor $\cD(F(-), d')$, as well as the functor $\cD(F(-), F(-))$ from the category $\mathcal{C}^{\Sscript{\rm op}} \times \mathcal{C}$ to the category of sets ($\mathcal{C}^{\Sscript{\rm op}}$ is the opposite category of $\mathcal{C}$ obtained by reversing the arrows of~$\mathcal{C}$).

Let $\Bbbk$ be a fixed base field and $1_{\Sscript{\Bbbk}}$ its identity element. Vector spaces over $\Bbbk$ and their morphisms (i.e.,~$\Bbbk$-linear maps) form a category, denoted by $\Vect$. Finite dimensional ones form a full subcategory of this, denoted by $\vect$. The symbol $\tensor{\Bbbk}$ denotes the tensor product between $\Bbbk$-vector spaces and their $\Bbbk$-linear maps. For any set $S$, we denote by $\Bbbk S:={\rm Span}_{\Sscript{\Bbbk}}\big\{x \,|\, x \in S \big\}$ the $\Bbbk$-vector space whose basis is the set $S$. Any element $x \in S$ is identified with its image $1_{\Sscript{\Bbbk}} x \in \Bbbk S$. By convention $\Bbbk S$ is the zero vector space whenever~$S$ is an empty set. When it is needed, we will also consider~$\Bbbk S$ as a set.

\looseness=-1 In this paper we shall consider rings without identity element (i.e., unity). Nevertheless, we will consider a class of rings (or $\Bbbk$-algebras) which have enough orthogonal idempotents in the sense of \cite{Fuller:1976, Gabriel:1962}, and that are mainly constructed from small categories. Specifically, given any small Hom-set category $\cD$, we can consider the \emph{path algebra} or \emph{Gabriel's ring of $\cD$}: Its underlying $\Bbbk$-vector space is the direct sum $R=\bigoplus_{\Sscript{x, x' \in \Do}} \Bbbk \cD(x,x')$ of $\Bbbk$-vector spaces. The multiplication of this ring is given by
the composition of $\cD$. Thus, for any two homogeneous generic elements $r, r' \in \Da$, the multiplication $(1_{\Sscript{\Bbbk}}r) . (1_{\Sscript{\Bbbk}}r')$ is defined by the rule: $ (1_{\Sscript{\Bbbk}}r) . (1_{\Sscript{\Bbbk}}r') =1_{\Sscript{\Bbbk}} (r r') $, the image of the composition of $r$ and $r'$ when $s(r)=t(r')$, otherwise we set $ (1_{\Sscript{\Bbbk}}r) . (1_{\Sscript{\Bbbk}}r') =0$ (see \cite[p.~346]{Gabriel:1962}). For any $x \in \Do$ we denote by $1_{\Sscript{x}}$ the image of the identity arrow of $x$ in the $\Bbbk$-vector space $R$.

In general, the ring $R$ has no unit, unless the set of objects $\Do$ is finite. Instead of that, it has a set of local units\footnote{Recall that a $\Bbbk$-vector space $R$ endowed with an associative $\Bbbk$-bilinear multiplication is said to be a \emph{ring with local units} over $\Bbbk$ if it has a set of idempotent elements, say $E \subset R$, such that for any finite subset of elements $\{r_1, \dots,r_n\} \subset R$, there is an element $e \in E$ such that $r_ie=er_i=r_i$, for any $i=1,\dots,n$. This means that any two elements $r, r' \in R$ are contained in an unital subring of the form $R_e=eRe$, for some $e \in E$, and $R$ is a~directed limit of the $R_e$'s, see \cite{Abrams:1983, Anh/Marki:1983, Anh/Marki:1987} and~\cite{ElKaoutit:2009}.}. Namely, the local units are given by the set of idempotent elements:
\begin{gather*}
\big\{1_{\Sscript{x_1}} \dotplus \cdots \dotplus 1_{\Sscript{x_n}}\in R \,|\, x_i \in\Do,\, i=1,\dots, n, \text{ and } n \in \mathbb{N}\setminus\{0\}\big\}.
\end{gather*}
For example, if we assume that $\cD$ is a discrete category, that is, the only arrows are the identities (or equivalently $\Do=\Da=X$) then $R=\Bbbk^{(X)}$ is the ring defined as the direct sum of $X$-copies of the base field.

A \emph{unital right $R$-module} is a right $R$-module $M$ such that $MR=M$, \emph{left unital modules} are similarly defined (see \cite[p.~347]{Gabriel:1962}). For instance, the previous ring $R$ attached to the category~$\cD$ decomposes as a direct sum of left and also of right unital $R$-modules:
\begin{gather*}
R = \bigoplus_{x \in \Do} R 1_{\Sscript{x}} = \bigoplus_{x \in \Do} 1_{\Sscript{x}} R.
\end{gather*}
Following \cite{Fuller:1976}, a ring which satisfies these two equalities is referred to as a \emph{ring with enough orthogonal idempotents}, whose complete set of idempotents is the set $\{1_{\Sscript{x}}\}_{x \in \Do}$. A morphism in this category of rings is obviously defined.

The $\Bbbk$-vector space of all homomorphisms between two left $R$-modules $M$ and $M'$ will be denoted by $\hom{R\text{-}}{M}{M'}$. If $T$ is another ring with enough orthogonal idempotents and if $M$ is an $(R, T)$-bimodule ($R$ acts on the left and $T$ on the right), then $\hom{R\text{-}}{M}{M'}$ is considered as left $T$-module by using the standard action $(a .f) \colon M \to M'$ sending $m$ to $f(m a)$ for every $a \in T$ and $f \in \hom{R\text{-}}{M}{M'}$.

\subsection{Description of the main results}\label{ssec:raroes}
A \emph{groupoid} is a small Hom-set category where each morphism is an isomorphism. More precisely, this is a pair of sets
$\cG:=(\cG_{\Sscript{1}}, \cG_{\Sscript{0}})$ with diagram of sets
\begin{gather*}
\xymatrix@C=35pt{
\cG_{\Sscript{1}}\ar@<0.70ex>@{->}|-{\scriptscriptstyle{{s}}}[r]
\ar@<-0.70ex>@{->}|-{\scriptscriptstyle{{t}}}[r] & \ar@{->}|-{
\scriptscriptstyle{\iota}}[l] \cG_{\Sscript{0}}},
\end{gather*}
where as above $s$ and $t$ are the source and the target of a given arrow respectively, and $\iota$ assigns to each object its identity arrow. In addition, there is an associative and unital multiplication $\cG_{\Sscript{2}}:= \cG_{\Sscript{1}} \due \times {\Sscript{{s}}} { \Sscript{{t}}} \cG_{\Sscript{1}} \to \cG_{\Sscript{1}}$ acting by $(f,g) \mapsto fg$, as well as a map $\cG_{\Sscript{1}} \to \cG_{\Sscript{1}}$ which associates to each arrow its inverse. Notice that $\iota$ is an injective map, and so~$\Go$ is identified with a subset of~$\Ga$. Then a groupoid is a category with additional structure, namely the map which sends any arrow to its inverse. We implicitly identify a groupoid with its underlying category. A morphism between two groupoids is just a functor between the underlying categories.

Given a groupoid $\cG$, we denote by $\Rep{\cG}$ its category of $\Bbbk$-linear representations, that is, $\Rep{\cG}=[\cG, \Vect]$, the category of functors from $\cG$ to $\Vect$. Let $\upphi \colon \cH \to \cG$ be a morphism of groupoids and denote by $\upphi_{\Sscript{*}}\colon \Rep{\cG} \to \Rep{\cH}$ the associated restriction functor. The induction and the co-induction functors are denoted by~$\upphi^{*}$ and~${}^{*}\upphi$, respectively (see Lemmas~\ref{lema:Induction} and~\ref{lema:Co-Induction} for the precise definitions of these functors). These are the right and the left adjoint functor of $\upphi_{\Sscript{*}}$. We say that $\upphi$ is a \emph{Frobenius morphism} (see Definition~\ref{def:Frobenius} below) provided that~$\upphi^{*}$ and~${}^{*}\upphi$ are naturally isomorphic.

\looseness=-1 Let $\upphi\colon \cH \to \cG$ be a morphism as above and denote by $A$ and $B$ the rings with enough orthogonal idempotents attached to $\cH$ and $\cG$, respectively. Then there is a $\Bbbk$-linear map given by
\begin{gather}\label{Eq:phi}
\phi\colon \ A \longrightarrow B, \qquad \bigg( \sum_i \lambda_i h_i \longmapsto \sum_i \lambda_i \phia(h_i)\bigg).
\end{gather}

As it is shown in Example~\ref{exam:Phi} below, this map is not in general multiplicative. However, under the assumption that $\phio\colon \Ho \to \Go$ is an injective map, $\phi\colon A \to B$ becomes a homomorphism (or extension) of rings with enough orthogonal idempotents and the complete set of idempotents $\{1_{\Sscript{\phio(u)}}\}_{u \in \Ho}$ is injected into the set $\{1_{\Sscript{x}}\}_{x \in \Go}$. In this way, $B$ becomes an $A$-bimodule via the restriction of scalars, although not necessarily a unital one.

The notion of right (left) groupoid-set and that of groupoid-biset are explicitly recalled in Definitions~\ref{def:Gset} and \ref{def:biset}, respectively. For any morphism $\upphi$ of groupoids as above, we denote $\upg:=\Ga \due \times {\Sscript{s}}{ \Sscript{\upphi_0} } \Ho$ and similarly ${ }^{\Sscript{\upphi}}\uU(\cG):=\Ho \due \times{\Sscript{\upphi_0}}{ \Sscript{t}} \Ga$. These are the right and the left pull-back bisets associated to $\upphi$. More precisely, $\upg$ is the right pull-back $(\cG,\cH)$-biset with structure maps $\varsigma\colon \upg \to \Go$, $(a,u) \mapsto t(a)$ and ${\rm pr}_{\Sscript{2}}\colon \upg \to \Ho$, $(a,u) \mapsto u$. Its right $\cH$-action is given by $(a,u) h= (a\phia(h), s(h))$, whenever $u=t(h)$, and its left $\cG$-action is given by $g(a,u)=(ga,u)$, whenever $s(g)=t(a)$. Similarly, we find that the left pull-back ${ }^{\Sscript{\upphi}}\uU(\cG)$ is an $(\cH,\cG)$-biset with structure maps $\vartheta\colon {}^{\Sscript{\upphi}}\uU(\cG) \to \Go$, $(u,a) \mapsto s(a)$, ${\rm pr}_{\Sscript{1}}\colon \pug \to \Ho, (u,a) \mapsto u$. The left $\cH$-action is given by $h (u,a)=(t(h), \phia(h)a)$, where $s(h)=u$, while the right $\cG$-action is given by $(u,a) g=(u,ag)$, where $s(a)=t(g)$ (see Examples~\ref{exam: HG} and~\ref{exam:bisets} below).

Now that we have all the ingredients at our disposal, we proceed to articulate our main result in the subsequent theorem, which is stated below as Theorem~\ref{thm:FM}.

\begin{Theorem}\label{thmA:FM}Let $\upphi\colon \cH \to \cG$ be a morphism of groupoids and consider as above the associated rings $A$ and $B$, respectively. Assume that $\upphi_{\Sscript{0}}$ is an injective map. Then the following are equivalent.
\begin{enumerate}[$(i)$]\itemsep=0pt
\item $\upphi$ is a Frobenius morphism;
\item There exists a natural transformation $\Sf{E}_{\Sscript{(u, v)}}\colon \cG(\phio(u),\phio(v)) \longrightarrow \Bbbk \cH(u,v)$ in $\cH^{\Sscript{\rm op}} \times \cH$, and for every $x \in \Go$, there exists a finite set $\{\big((u_{i},b_{i}), c_{i}\big)\}_{i=1, \dots, N} \in \varsigma^{-1}\big( \{x\}\big)\times \Bbbk \cG(x, \phio(u_{i}))$ such that, for every pair of elements $ (b, b') \in \cG(x, \phio(u)) \times \cG(\phio(u), x)$, we have
\begin{gather*}
\sum_{i}\Sf{E}(bb_{i})c_{i} = b \in \Bbbk \cG(x, \phio(u)) \qquad \text{and} \qquad b' = \sum_{i}b_{i}\Sf{E}(c_{i}b') \in \Bbbk \cG(\phio(u), x).
\end{gather*}
\item For every $x\in \Go$, the left unital $A$-module $AB1_{x}$ is finitely generated and projective and there is a natural isomorphism $B1_{u} \cong B\hom{A-}{AB}{A1_{u}}$, of left
unital $B$-modules, for every $u\in \Ho$.
\end{enumerate}
\end{Theorem}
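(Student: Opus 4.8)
The plan is to route everything through the path algebras $A$ and $B$ and to reduce the three statements to the classical characterisations of a Frobenius extension, keeping careful track of the fact that $A$ and $B$ have no global unit but only the local units $\{1_{\Sscript{u}}\}_{u\in\Ho}$ and $\{1_{\Sscript{x}}\}_{x\in\Go}$. First I identify $\Rep{\cG}$ with the category of unital left $B$-modules and $\Rep{\cH}$ with that of unital left $A$-modules; under these identifications $\upphi_{\Sscript{*}}$ is restriction of scalars along $\phi$, and the induction and co-induction functors of Lemmas~\ref{lema:Induction} and~\ref{lema:Co-Induction} become, respectively, its right adjoint $\upphi^{\Sscript{*}}=\hom{A-}{B}{-}$ and its left adjoint ${}^{\Sscript{*}}\upphi=B\tensor{A}(-)$ (in each case one passes to the unital part). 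Because the modules involved are unital, I may evaluate these functors one object at a time: the $x$-component of ${}^{\Sscript{*}}\upphi(W)$ is $1_{\Sscript{x}}B\tensor{A}W$, while that of $\upphi^{\Sscript{*}}(W)$ is $\hom{A-}{AB1_{\Sscript{x}}}{W}$, where $AB1_{\Sscript{x}}$ is the unital left $A$-module of arrows out of $x$ whose target lies in $\phio(\Ho)$ (the left $A$ in the notation records exactly this reduction, forced by $A$ acting through $\phi$). A natural isomorphism ${}^{\Sscript{*}}\upphi\cong\upphi^{\Sscript{*}}$ is thus the same datum as a family, natural in $W$ and compatible across $x\in\Go$, of isomorphisms $1_{\Sscript{x}}B\tensor{A}W\cong\hom{A-}{AB1_{\Sscript{x}}}{W}$. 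I will prove $(i)\Rightarrow(iii)\Rightarrow(ii)\Rightarrow(i)$.

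For $(i)\Rightarrow(iii)$, observe that ${}^{\Sscript{*}}\upphi=B\tensor{A}(-)$ is a left adjoint, hence preserves all colimits; a natural isomorphism ${}^{\Sscript{*}}\upphi\cong\upphi^{\Sscript{*}}$ therefore forces $\upphi^{\Sscript{*}}=\hom{A-}{B}{-}$ to preserve coproducts and to be right exact. Read fibrewise, this says that $\hom{A-}{AB1_{\Sscript{x}}}{-}$ preserves coproducts and is exact, which is precisely the dual basis characterisation of $AB1_{\Sscript{x}}$ being finitely generated (coproducts) and projective (exactness) as a left $A$-module. Finally, evaluating the natural isomorphism at the representable modules $A1_{\Sscript{u}}$ and using $B\tensor{A}A1_{\Sscript{u}}\cong B1_{\Sscript{u}}$ yields the asserted left $B$-module isomorphism $B1_{\Sscript{u}}\cong B\hom{A-}{AB}{A1_{\Sscript{u}}}$.

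For $(iii)\Rightarrow(ii)$, the finite generation and projectivity of $AB1_{\Sscript{x}}$ give, by the dual basis lemma, finitely many $c_{\Sscript{i}}\in AB1_{\Sscript{x}}$ and $\beta_{\Sscript{i}}\in\hom{A-}{AB1_{\Sscript{x}}}{A}$ with $b=\sum_{i}\beta_{\Sscript{i}}(b)c_{\Sscript{i}}$ for all $b\in AB1_{\Sscript{x}}$. The bimodule isomorphism of $(iii)$, read one $u$ at a time and transposed to an isomorphism $\hom{A-}{AB1_{\Sscript{x}}}{A}\cong 1_{\Sscript{x}}BA$, converts each $\beta_{\Sscript{i}}$ into an element $b_{\Sscript{i}}$ with $(b_{\Sscript{i}},u_{\Sscript{i}})\in\varsigma^{-1}(\{x\})$, and at the same time produces the $A$-bilinear map $\Sf{E}\colon B\to A$ determined by $\beta_{\Sscript{i}}=\Sf{E}\big((-)\,b_{\Sscript{i}}\big)$; its components are the required natural transformation $\Sf{E}_{\Sscript{(u,v)}}\colon\cG(\phio(u),\phio(v))\to\Bbbk\cH(u,v)$, naturality in $\cH^{\Sscript{\rm op}}\times\cH$ amounting to the identity $\Sf{E}(h'\,z\,h)=h'\,\Sf{E}(z)\,h$. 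The first relation of $(ii)$ is now the dual basis equation rewritten through $\Sf{E}$, and the second relation is its mirror image, which follows from bijectivity of the bimodule isomorphism, i.e., from non-degeneracy of the pairing $(z,z')\mapsto\Sf{E}(zz')$.

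Finally, $(ii)\Rightarrow(i)$ reverses this and is where the computation is cleanest. For each unital left $A$-module $W$ and each $x\in\Go$, define $\hom{A-}{AB1_{\Sscript{x}}}{W}\to 1_{\Sscript{x}}B\tensor{A}W$ by $f\mapsto\sum_{i}b_{\Sscript{i}}\tensor{A}f(c_{\Sscript{i}})$, and a candidate inverse $1_{\Sscript{x}}B\tensor{A}W\to\hom{A-}{AB1_{\Sscript{x}}}{W}$ by $z\tensor{A}w\mapsto\big(b'\mapsto\Sf{E}(b'z)\,w\big)$; here $b'\in AB1_{\Sscript{x}}$ and (after absorbing the idempotent carried by $w$) $z$ both have their free endpoints in $\phio(\Ho)$, so $b'z$ lies in a diagonal block and $\Sf{E}$ applies. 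The first relation of $(ii)$ gives $f(b')=\sum_{i}\Sf{E}(b'b_{\Sscript{i}})f(c_{\Sscript{i}})=f\big(\sum_{i}\Sf{E}(b'b_{\Sscript{i}})c_{\Sscript{i}}\big)$, so one composite is the identity, and the second relation gives $\sum_{i}b_{\Sscript{i}}\Sf{E}(c_{\Sscript{i}}z)=z$, so the other composite is the identity; left $B$-linearity and naturality in $W$ are then routine, and assembling over $x$ produces the natural isomorphism ${}^{\Sscript{*}}\upphi\cong\upphi^{\Sscript{*}}$. I expect the main obstacle to be not any single computation but the systematic non-unital bookkeeping: in $(i)\Rightarrow(iii)$ one must extract \emph{fibrewise} finiteness from an abstract natural isomorphism over the possibly infinite object set $\Go$, where the usual Eilenberg--Watts-type arguments (which presuppose a unit) must be replaced by their local-unit analogues; and in the remaining directions one must verify that the per-object maps and the fibrewise-finite dual bases glue into genuine natural transformations of functors on unital modules, the orbit-finiteness of the fibres $\varsigma^{-1}(\{x\})$ being exactly the input the dual basis lemma consumes.
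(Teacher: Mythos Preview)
Your proof is correct and takes a genuinely different route from the paper's. The paper argues along the cycle $(i)\Rightarrow(ii)\Rightarrow(iii)\Rightarrow(i)$ and stays largely on the representation-theoretic side: for $(i)\Rightarrow(ii)$ it evaluates the natural isomorphism at the representable $\cH$-representations $\cH_u$, computes $\injlimit{\cH^{\upphi,x}}{\vV^{\upphi,x}_*(\cH_u)}\cong\Bbbk\cG(\phio(u),x)$ directly (Lemma~\ref{lema:I-II-1}), and then extracts $\Sf{E}$ via a Yoneda-type bijection (Lemma~\ref{lema:I-II-2}); the remaining steps are explicit constructions of $\Psi_u$, $\mho$, and $\Gamma_{(u,x)}$. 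You instead pass immediately to the module picture $\Rep{\cG}\simeq{}_B\Sf{Mod}$, identify $\upphi^*$ and ${}^*\upphi$ with the Hom and tensor functors, and run the cycle $(i)\Rightarrow(iii)\Rightarrow(ii)\Rightarrow(i)$ using standard Frobenius-extension machinery (colimit preservation forces $AB1_x$ to be finitely generated projective; the dual basis plus the bimodule isomorphism yield $\Sf{E}$ and the two reconstruction formulae; the explicit inverse pair in $(ii)\Rightarrow(i)$ is the usual Frobenius isomorphism written fibrewise). What your approach buys is economy and a transparent link to the classical ring-theoretic criterion for Frobenius extensions; what the paper's approach buys is that it never leaves the language of functors and natural transformations, so the identifications $\upphi^*(\cW)_x\cong\hom{A-}{AB1_x}{\cW}$ and ${}^*\upphi(\cW)_x\cong 1_xB\tensor{A}\cW$ need not be established up front. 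The one place you should tighten is $(iii)\Rightarrow(ii)$: the second identity $b'=\sum_i b_i\Sf{E}(c_ib')$ does not follow from ``non-degeneracy'' alone but from injectivity of the fibrewise isomorphism $1_xB1_{\phio(u)}\to\hom{A-}{AB1_x}{A1_u}$ together with the already-proved first identity (apply $\Sf{E}(b\,-)$ to both sides and use left $A$-linearity); this is exactly the computation the paper does with $\Psi_u$ and $\mho$, only in reverse order.
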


In what follows a groupoid is said to be \emph{finite} if it has finitely many connected components and each of its isotropy group is finite. On the other hand, it is noteworthy to mention that if the arrow map $\phia\colon \Ha \to \Ga$ of a given morphism of groupoids $\upphi\colon \cH \to \cG$ is injective, then $\upphi$ is obviously a faithful functor and the object map $\phio\colon \Ho \to \Go$ is also injective. The following result, which characterizes the case of an extension by subgroupoids, is a corollary of Theorem~\ref{thmA:FM} and stated below as Corollary~\ref{coro:Finite}.

\begin{Corollary}\label{coroB:Finite}Let $\upphi\colon \cH \to \cG$ be a morphism of groupoids such that $\upphi$ is a faithful functor and $\upphi_{\Sscript{0}}\colon \Ho \to \Go$ is an injective map. Then the following are equivalent:
\begin{enumerate}[$(a)$]\itemsep=0pt
\item $\upphi$ is a Frobenius extension;
\item For any $x \in \Go$, the left $\cH$-set $\vartheta^{-1}\big(\{x\} \big)$ has finitely many orbits;
\item For any $x \in \Go$, the right $\cH$-set $\varsigma^{-1}\big(\{x\} \big)$ has finitely many orbits.
\end{enumerate}
In particular, any inclusion of finite groupoids is a Frobenius extension.
\end{Corollary}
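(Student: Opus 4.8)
The plan is to derive the corollary from Theorem~\ref{thmA:FM}: assertion $(a)$ is condition $(i)$ there, and I would match it to $(b)$ and $(c)$ through condition $(ii)$ (equivalently the module reformulation $(iii)$). The hypotheses are compatible, since a faithful $\upphi$ has $\phia$ injective, which forces $\phio$ injective. The one feature of the faithful case that drives everything is \emph{freeness of the action}: for $(a,u)\in\varsigma^{-1}(\{x\})$ the right $\cH$-stabilizer is $\{h : s(h)=u,\ a\phia(h)=a\}=\{h : \phia(h)=\mathrm{id}\}$, which is trivial because $\phia$ is injective; the same holds for the left action on $\vartheta^{-1}(\{x\})$. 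First I would record the module--biset dictionary: unwinding the path-algebra product gives $1_{\phio(u)}B1_{x}=\Bbbk\cG(x,\phio(u))$, whence $AB1_{x}=\bigoplus_{u\in\Ho}\Bbbk\cG(x,\phio(u))$ is exactly the linearization $\Bbbk\big[\vartheta^{-1}(\{x\})\big]$ of the left $\cH$-set $\vartheta^{-1}(\{x\})$, with $A$-module structure induced from the $\cH$-action. Since the action is free, each orbit is a torsor and its linearization is a representable, hence projective, module; consequently $AB1_{x}$ is finitely generated and projective if and only if $\vartheta^{-1}(\{x\})$ has finitely many orbits. This is the bridge between condition $(iii)$ of Theorem~\ref{thmA:FM} and assertion $(b)$.

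Next, faithfulness supplies the natural transformation of condition $(ii)$ canonically: as $\phia$ is injective, define $\Sf{E}_{(u,v)}\colon \cG(\phio(u),\phio(v))\to\Bbbk\cH(u,v)$ by sending $g$ to the unique $h$ with $\phia(h)=g$ when $g\in\mathrm{im}(\phia)$ and to $0$ otherwise; naturality in $\cH^{\Sscript{\rm op}}\times\cH$ is then routine. Assuming $(c)$, I would pick representatives of the finitely many orbits of $\varsigma^{-1}(\{x\})$, with underlying arrows $b_{i}\colon\phio(u_{i})\to x$, and set $c_{i}:=b_{i}^{-1}\in\cG(x,\phio(u_{i}))$. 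For $b\in\cG(x,\phio(u))$ the term $\Sf{E}(bb_{i})c_{i}$ equals $b$ exactly when $b^{-1}$ lies in the orbit of $b_{i}$ and vanishes otherwise; freeness means $b^{-1}$ lies in exactly one orbit and is hit by exactly one $i$, so $\sum_{i}\Sf{E}(bb_{i})c_{i}=b$, and the second identity of $(ii)$ follows symmetrically. Thus $(c)\Rightarrow(a)$. Conversely, the second identity of $(ii)$ exhibits every element of $\varsigma^{-1}(\{x\})$ as lying in the right $A$-submodule generated by the finite family $\{b_{i}\}$; since, by freeness, this linearization is a direct sum of the projective pieces attached to the orbits, finite generation forces finitely many orbits, i.e.\ $(c)$. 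Finally $(b)\Leftrightarrow(c)$ comes from groupoid inversion $a\mapsto a^{-1}$, a bijection $\vartheta^{-1}(\{x\})\to\varsigma^{-1}(\{x\})$ that interchanges the left and right $\cH$-actions and hence matches orbits.

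For the last assertion, let $\cH\hookrightarrow\cG$ be an inclusion of finite groupoids; then $\phia$ and $\phio$ are injective, so it suffices to bound the number of $\cH$-orbits in $\vartheta^{-1}(\{x\})$. Grouping the arrows $a\colon x\to\phio(u)$ by the connected component of $u$ in $\cH$, every orbit meeting a fixed component $[u_{0}]$ has a representative with target $\phio(u_{0})$, and two such representatives are $\cH$-equivalent precisely when they differ by left multiplication by an element of $\phia\big(\cH(u_{0},u_{0})\big)$. Hence the orbits in that component are indexed by the coset space $\cG(\phio(u_{0}),\phio(u_{0}))/\phia(\cH(u_{0},u_{0}))$, which is finite because the isotropy group $\cG(\phio(u_{0}),\phio(u_{0}))$ is finite; as $\cH$ has finitely many components, the total orbit count is finite and $(b)$ holds. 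The main obstacle throughout is the bookkeeping in the module--biset dictionary: identifying $AB1_{x}$ with $\Bbbk[\vartheta^{-1}(\{x\})]$ and verifying that freeness of the action, the one place where faithfulness is genuinely used, simultaneously converts finiteness of the orbit count into finite generation and projectivity and licenses the inverses $b_{i}^{-1}$ as dual coordinates in condition $(ii)$.
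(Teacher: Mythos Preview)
Your approach is essentially the paper's: you invoke Theorem~\ref{thmA:FM}, identify $AB1_{x}\cong\Bbbk\bigl[\vartheta^{-1}(\{x\})\bigr]$, use freeness of the $\cH$-action to pass between finite generation and finitely many orbits, and build $\Sf{E}$ together with the dual-basis data $\{b_i, b_i^{-1}\}$ from a choice of orbit representatives; your $\Sf{E}$ (preimage if in $\mathrm{im}\,\phia$, else $0$) is literally the paper's projection $\fk{p}_{1,\phio(u)}$ onto the orbit of the identity. The only slip is the sentence ``a faithful $\upphi$ has $\phia$ injective, which forces $\phio$ injective'': faithfulness alone only gives injectivity on each hom-set, not of $\phia$ globally; it is the \emph{combination} of faithfulness with the assumed injectivity of $\phio$ that makes $\phia$ injective, and this is what you actually use throughout.
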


If we consider, in both Theorem~\ref{thmA:FM} and Corollary~\ref{coroB:Finite}, groupoids with only one object, then we recover the classical result on Frobenius extensions of group algebras, see Corollary \ref{cor:FiniteI} below for more details.

\section[Abstract groupoids: General definition, basic properties and examples]{Abstract groupoids: General definition, basic properties\\ and examples}\label{sec:Grpd}
This section contains all the material: definitions, properties and examples of abstract groupoids that will be used in the course of the following sections. This material was recollected form \cite{ElKaoutit:2015, Kaoutit/Kowalzig:14, Kaoutit/Spinosa:2018} and from the references quoted therein. All groupoids discussed below are abstract and small ones, in the sense that the class of arrows is actually a set, and they do not admit any topological or combinatorial structures.

\subsection{Notations, basic notions and examples}\label{ssec:basic}Let $\cG$ be a groupoid and consider an object $x \in \Go$. \emph{The isotropy group of $\cG$ at $x$}, is the group:
\begin{gather}\label{Eq:isotropy}
\cG^{\Sscript{x}}:=\cG(x,x) = \big\{ g \in \cG_{\Sscript{1}}\,|\, s(g)=t(g)=x \big\}.
\end{gather}

Clearly, for any two objects $x, y \in G_{\Sscript{0}}$, we have that each of the sets $\cG(x,y)$ is, by the groupoid multiplication, a right $\cG^{\Sscript{x}}$-set and left $\cG^{\Sscript{y}}$-set. Notice here that the multiplication of the groupoid is by convention defined as the map $\cG_{\Sscript{1}} \due \times {\Sscript{{s}}} { \Sscript{{t}}} \cG_{\Sscript{1}} \to \cG_{\Sscript{1}}$ sending $(f,g)$ to $f \circ g:=fg$, so that $\mathcal{G}^{y} \times \mathcal{G}(x,y)$ and $\mathcal{G}(x,y) \times \mathcal{G}^{x} $ are both subsets of $\cG_{\Sscript{1}} \due \times {\Sscript{{s}}} { \Sscript{{t}}} \cG_{\Sscript{1}} $. In fact, each of the $\cG(x,y)$'s is a $(\cG^{\Sscript{y}}, \cG^{\Sscript{x}})$-biset, see~\cite{Bouc:2010} for pertinent definitions.

The \emph{$($left$)$ star} of an object $x \in \Go$ is defined by $\Starl{x}:=t^{-1}\big( \{ x\}\big)=\{ g \in \Ga \,|\, t(g)=x \}$. The right star is defined using the source map, and both left and right stars are in bijection. Now, given an arrow $g \in \Ga$, we define the \emph{conjugation operation} (or the \emph{adjoint operator}) as the morphism of groups:
\begin{gather}\label{Eq:Ad}
\Ad{g}\colon \ \cG^{\Sscript{s(g)}} \longrightarrow \cG^{\Sscript{t(g)}}, \qquad \big( f \longmapsto gfg^{-1} \big).
\end{gather}

Let $\upphi\colon \cH \to \cG$ be a morphism of groupoids. Obviously $\upphi$ induces homomorphisms of groups between the isotropy groups: $\upphi^{\Sscript{y}}\colon \cH^{\Sscript{y}} \to \cG^{\Sscript{\upphi_0(y)}}$, for every $y \in H_{\Sscript{0}}$. The family of homomorphisms $\{\upphi^{\Sscript{y}}\}_{ y \in \Ho }$ is referred to as \emph{the isotropy maps of $\upphi$}. For a fixed object $x \in \Go$, its fibre $\phio^{-1}(\{x\})$, if not empty, leads to the following ``star'' of homomorphisms of groups:
\begin{scriptsize}
\begin{center}
\tikzstyle{every pin edge}=[<-,shorten <=1pt]
\tikz[pin distance=5mm]
\node [circle, draw, pin=right: {\mycircled{$H^{\Sscript{y}}$}},
pin=above right:{\mycircled{$H^{\Sscript{y}}$}},
pin=above:{\mycircled{$H^{\Sscript{y}}$}}, pin=above
left:{\mycircled{$H^{\Sscript{y}}$}},
pin=left:{\mycircled{$H^{\Sscript{y}}$}},
pin=below:{\mycircled{$H^{\Sscript{y}}$}}, pin=below
left:{\mycircled{$H^{\Sscript{y}}$}},pin=below
right:{\mycircled{$H^{\Sscript{y}}$}} ]
{$G^{\Sscript{x}}$};
\end{center}
\end{scriptsize}
where $y$ runs in the fibre $\phio^{-1}(\{x\})$.

\begin{Example}[trivial groupoid]\label{exam:trivial}Let $X$ be any set. Then the pair $(X, X)$ can be considered as a groupoid with a trivial structure. Thus, the only arrows are the identities. This groupoid is known as \emph{a trivial groupoid}.
\end{Example}

\begin{Example}[action groupoid]\label{exam:action} \looseness=-1 Any group $G$ can be considered as a~groupoid by taking $G_{\Sscript{1}}=G$ and $G_{\Sscript{0}}=\{*\}$ (a set with one element). Now, if $X$ is a right $G$-set with action $\rho\colon X\times G \to X$, then one can define the so called \emph{the action groupoid}: $G_{\Sscript{1}}=X \times G$ and $G_{\Sscript{0}}=X$, the source and the target are $s=\rho$ and $t={\rm pr}_{\Sscript{1}}$, the identity map sends $x \mapsto (x, e)=\iota_{\Sscript{x}}$, where $e$ is the identity element of $G$. The multiplication is given by $(x,g) (x',g')=(x,gg')$, whenever $xg=x'$, and the inverse is defined by $(x,g)^{-1}=\big(xg,g^{-1}\big)$. Clearly the pair of maps $({\rm pr}_{\Sscript{2}}, *)\colon \cG:= (G_{\Sscript{1}}, G_{\Sscript{0}}) \to (G,\{*\})$ defines a morphism of groupoids. For a given $ x \in X$, the isotropy group $\cG^{\Sscript{x}}$ is clearly identified with the stabilizer $\emph{Stab}_{\Sscript{G}}(x)=\{g \in G \,|\, gx=x\}$ subgroup of~$G$.

Given $\sigma\colon X \to Y$ a morphism of right $G$-sets, then the pair of maps $(X\times G, X) \to (Y\times G, Y)$ sending $\big( (x,g), x' \big) \mapsto \big( (\sigma(x), g),\sigma(x') \big)$ defines a morphism of action groupoids.
\end{Example}

\begin{Example}[equivalence relation groupoid]\label{exam:X} We expound here several examples which, in fact, belong to the same class, that of equivalence relation groupoids.
\begin{enumerate}[(1)]\itemsep=0pt
\item One can associate to a given set $X$ the so called \emph{the groupoid of pairs} (called \emph{fine groupoid} in~\cite{Brown:1987} and \emph{simplicial groupoid} in~\cite{Higgins:1971}), its set of arrows is defined by $G_{\Sscript{1}}=X \times X$ and the set of objects by $G_{\Sscript{0}}=X$; the source and the target are $s={\rm pr}_{\Sscript{2}}$ and $t={\rm pr}_{\Sscript{1}}$, the second and the first projections, and the map of identity arrows $\iota$ is the diagonal map. The multiplication and the inverse maps are given by
\begin{gather*}
(x,x') (x',x'') = (x,x''),\qquad \text{and} \qquad (x,x')^{-1} = (x',x).
\end{gather*}
Let $f\colon X\to Y$ be any map and consider the trivial groupoid $(X,X)$ as in Example~\ref{exam:trivial} together with the groupoid of pairs $(Y \times Y, Y)$. Then, the pair of maps
$(F_{\Sscript{1}}, F_{\Sscript{0}})\colon (X,X) \to (Y\times Y, Y)$, where $F_{\Sscript{1}}\colon X \to Y\times Y$, $x \mapsto (f(x), f(x))$ and $F_{\Sscript{0}}=f$, establishes a morphism of groupoids.

\item Let $\upnu\colon X \to Y$ be a map. Consider the fibre product $X \due \times {\Sscript{\upnu}} {\; \Sscript{\upnu}} X$ as a set of arrows of the groupoid $\xymatrix@C=35pt{ X \due \times {\Sscript{\upnu}} {\; \Sscript{\upnu}} X \ar@<0.8ex>@{->}|-{\scriptscriptstyle{{\rm pr}_2}}[r] \ar@<-0.8ex>@{->}|-{\scriptscriptstyle{{\rm pr}_1}}[r] & \ar@{->}|-{ \scriptscriptstyle{\iota}}[l] X, }$ where as before $s={\rm pr}_{\Sscript{2}}$ and $t={\rm pr}_{\Sscript{1}}$, and the map of identities arrows is $\iota$ the diagonal map. The multiplication and the inverse are the obvious ones.

\item Assume that $\cR \subseteq X \times X$ is an equivalence relation on the set $X$. One can construct a groupoid
$\xymatrix@C=35pt{\cR \ar@<0.8ex>@{->}|-{\scriptscriptstyle{{\rm pr}_2}}[r] \ar@<-0.8ex>@{->}|-{\scriptscriptstyle{{\rm pr}_1}}[r] & \ar@{->}|-{\scriptscriptstyle{\iota}}[l] X, }$ with structure maps as before. This is an important class of groupoids known as \emph{the groupoid of equivalence relation} (or \emph{equivalence relation groupoid}). Obviously $(\cR, X) \hookrightarrow (X\times X, X)$ is a morphism of groupoid, see for instance \cite[Example~1.4, p.~301]{DemGab:GATIGAGGC}.
\end{enumerate}
Notice that in all these examples each of the isotropy groups is the trivial group.
\end{Example}

\begin{Example}[induced groupoid]\label{exam:induced}Let $\cG=(G_{\Sscript{1}}, G_{\Sscript{0}})$ be a groupoid and $\varsigma\colon X \to G_{\Sscript{0}}$ a map. Consider the following pair of sets:
\begin{gather*}
G^{\Sscript{\varsigma}}{}_{\Sscript{1}}:= X \due \times {\Sscript{\varsigma}} { \Sscript{t}} G_{\Sscript{1}} \; \due \times {\Sscript{s}} { \Sscript{\varsigma}} X= \big\{ (x,g,x')
\in X\times G_{\Sscript{1}}\times X\,| \, \varsigma(x)=t(g), \varsigma(x')=s(g) \big\}, \quad G^{\Sscript{\varsigma}}{}_{\Sscript{0}}:=X.
\end{gather*}
Then $\cG^{\Sscript{\varsigma}}{}=(G^{\Sscript{\varsigma}}{}_{\Sscript{1}}, G^{\Sscript{\varsigma}}{}_{\Sscript{0}})$ is a groupoid, with structure maps: $s= {\rm pr}_{\Sscript{3}}$, $t= {\rm pr}_{\Sscript{1}}$, $\iota_{\Sscript{x}}=(\varsigma(x), \iota_{\Sscript{\varsigma(x)}}, \varsigma(x))$, $x \in X$. The multiplication is defined by $(x,g,y) (x',g',y')= ( x,gg',y')$, whenever $y=x'$, and the inverse is given by $(x,g,y)^{-1}=\big(y,g^{-1},x\big)$. The groupoid $\cG^{\Sscript{\varsigma}}$ is known as \emph{the induced groupoid of $\cG$ by the map~$\varsigma$}, (or \emph{the
pull-back groupoid of $\cG$ along $\varsigma$}, see~\cite{Higgins:1971} for dual notion). Clearly, there is a canonical morphism $\phi^{\Sscript{\varsigma}}:=({\rm pr}_{\Sscript{2}},
\varsigma)\colon \cG^{\Sscript{\varsigma}} \to \cG$ of groupoids. A particular instance of an induced groupoid, is the one when $\cG=G$ is a groupoid with one object. Thus, for any group~$G$, one can consider the Cartesian product $X \times G \times X$ as a set of arrows of a groupoid with set of objects~$X$.
\end{Example}

\begin{Example}[frame groupoid]\label{exam:frame}Let $\pi\colon \cY \to X$ be a surjective map, and write $\cY =\biguplus_{\Sscript{ x \in X }} \cY_{\Sscript{x}}$, where $\cY_{\Sscript{x}}:=\pi^{-1}(\{x\})$ (the fibres of $\pi$ at $x$). For any pair of elements $x, x' \in X$, we set
\begin{gather*}
\cG(x,x'):=\big\{ f \colon \cY_{\Sscript{x}} \to \cY_{\Sscript{x'}}\,|\, f \text{ is a bijective map} \big\},
\end{gather*}
then the pair $(\Ga, \Go):=\big(\biguplus_{\Sscript{x, x' \in X}} \cG(x,x'), X\big)$ admits a structure of groupoid (possibly a trivial one), referred to as \emph{the frame groupoid of $(\cY, \pi)$} and denoted by ${\rm Iso}(\cY, \pi)$, see also~\cite{Renault:1980}.

In a more general setting, one can similarly define the frame groupoid of a given family $\{\cY_{\Sscript{x}}\}_{x \in X}$ of objects in a certain category, indexed by a set~$X$. For instance, we could take each of the $\cY_{\Sscript{x}}$'s as an abelian group (resp.~$\Bbbk$-vector space), in this case, the set of arrows $\cG(x,x')$ should be the set of all abelian group isomorphisms (resp.~$\Bbbk$-linear isomorphisms).
\end{Example}

\begin{Example}[isotropy groupoid]\label{exam:IsotropyGrpd}Let $\cG$ be a groupoid, then the disjoint union ${\biguplus}_{\Sscript{x \in G_0}} \cG^{\Sscript{x}}$ of all its isotropy groups form the set of arrows of a subgroupoid of $\cG$ whose source is equal to its target, namely the projection $\varsigma\colon {\biguplus}_{\Sscript{x \in G_0}} \cG^{\Sscript{x}} \to G_{\Sscript{0}}$. We denote this groupoid by $\cG^{\Sscript{(i)}}$ and refer to it as \emph{the isotropy groupoid of~$\cG$}. For instance, the isotropy groupoid of any equivalence relation groupoid is a trivial one as in Example~\ref{exam:trivial}.
\end{Example}

\subsection{Groupoids actions and equivariant maps}\label{ssec:Grpd1}
The subsequent definition is, in fact, an abstract formulation of that given in \cite[Definition~1.6.1]{Mackenzie:2005} for Lie groupoids, and essentially the same definition based on the Sets-bundles notion given in \cite[Definition~1.11]{Renault:1980}.

\begin{Definition}\label{def:Gset}Given a groupoid $\mathcal{G}$ and a map $\varsigma\colon X \to \Go$. We say that $(X,\varsigma)$ is a \emph{right} $\cG$-\emph{set} (with a \emph{structure map} $\varsigma$), if there is a map (\emph{the action}) $\rho\colon X \due \times {\Sscript{\varsigma}} { \Sscript{{t}}} \Ga \to X$ sending $(x,g) \mapsto xg$, satisfying the following conditions:
\begin{enumerate}[1)]\itemsep=0pt
\item $s(g)=\varsigma(xg)$, for any $x \in X$ and $g \in \Ga$ with $\varsigma(x)=t(g)$,
\item $x \iota_{\varsigma(x)}= x$, for every $x \in X$,
\item $ (xg)h= x(gh)$, for every $x \in X$, $g,h \in \Ga$ with $\varsigma(x)={t}(g)$ and $t(h)=s(g)$.
\end{enumerate}
\end{Definition}

A \emph{left action} is analogously defined by interchanging the source with the target. In general, a set with a (right or left) groupoid action is called \emph{a groupoid-set}.

\begin{Remark}\label{rem:GrpdSets}If we think of group as a groupoid with a single object, then Definition~\ref{def:Gset} leads to the definition of the usual action of a group on a set (see~\cite{Bouc:2010}). From a categorical point of view, this action is nothing but a functor from the underlying category of such a groupoid to the core category of sets\footnote{The core category of a given category is the subcategory whose arrows are all the isomorphisms.}. Writing down this formulation for groupoids with several objects, will leads to the Definition~\ref{def:Gset}. Specifically, following \cite[Remark~2.6]{Kaoutit/Spinosa:2018}, for any groupoid $\cG$, there is a (symmetric monoidal) equivalence between the category of right $\cG$-sets and the category of functors from $\cG^{\Sscript{\rm op}}$ to the core category of sets. An analogue equivalence of categories holds true for left $\cG$-sets. Following the same reasons that were explained in \cite[Remark 2.6, Section~5.3]{Kaoutit/Spinosa:2018}, in this paper we will work with Definition~\ref{def:Gset} instead of the aforementioned functorial approach.
\end{Remark}

Obviously, any groupoid $\cG$ acts over itself on both sides by using the regular action, i.e., the multiplication $\Ga \due \times {\Sscript{{s}}} { \Sscript{{t}}} \Ga \to \Ga$. That is, $(\Ga, {s})$ is a right $\cG$-set and $(\Ga, {t})$ is a left $\cG$-set with this action. On the other hand, the pair $(\Go, {\rm id})$ admits a structure of right $\cG$-set, as well as a structure of a left $\cG$-set. For instance, the right action is given by the map $\Go \due \times {\Sscript{{\rm id}}} { \Sscript{{t}}} \Ga \to \Go$ sending $(x,g) \mapsto x . g= s(g)$.

A \emph{morphism of right $\cG$-sets} (or \emph{$\cG$-equivariant map}) $F\colon (X,\varsigma) \to (X',\varsigma')$ is a map $F\colon X \to X'$ such that the diagrams
\begin{gather}
\begin{gathered}
\xymatrix@R=12pt{ & X \ar@{->}_-{\Sscript{\varsigma}}[ld]
\ar@{->}^-{F}[dd] & \\ G_{\Sscript{0}}& & \\ & X'
\ar@{->}^-{\Sscript{\varsigma'}}[lu] & } \qquad \qquad
\xymatrix@R=12pt{X \due \times {\Sscript{\varsigma}} {
\Sscript{t}} \Ga \ar@{->}^-{}[rr] \ar@{->}_-{\Sscript{F \times
 {\rm id}}}[dd] & & X \ar@{->}^-{\Sscript{F}}[dd] \\ & & \\ X' \due
\times {\Sscript{\varsigma'}} { \Sscript{t}} \Ga
\ar@{->}^-{}[rr] & & X' }
\end{gathered}
\end{gather}
commute. The category so is constructed is termed \emph{the category of right $\cG$-sets} and denoted by $\Gsets$. It is noteworthy to mention that this category admits a structure of symmetric monoidal category, which is isomorphic to the category of left $\cG$-sets. Indeed, to any right $\cG$-set $(X,\varsigma)$ one associated its \emph{opposite left $\cG$-set} $(X, \varsigma)^{\Sscript{o}}$ whose underlying set is~$X$ and structure maps is $\varsigma$, while the left action is given by $g x = x\big(g^{-1}\big)$, for every pair $(g, x) \in \Ga \due \times {\Sscript{s}} { \Sscript{\varsigma}} X$, see~\cite{Kaoutit/Kowalzig:14} for more properties of these categories.

Given two right $\cG$-sets $(X,\varsigma)$ and $(X',\varsigma')$, we denote by $\hom{\Gsets}{X}{X'}$ the set of all $\cG$-equivariant maps from $(X,\varsigma)$ to $(X',\varsigma')$. A subset $Y \subseteq X$ of a right $\cG$-set $(X,\varsigma)$, is said to be \emph{$\cG$-invariant} whenever the inclusion $Y \hookrightarrow X$ is a $\cG$-equivariant map. For instance, any left star $\Starl{x}$ of any object $x \in\Go$, is a $\cG$-invariant subset of the right $\cG$-set $(\Ga, s)$.

A trivial example of right groupoid-set is a right group-set. Specifically, if we consider a group as a groupoid with only one object, then its category of group-sets coincides with its category of groupoid-sets. The following example, which will be used in the sequel, describes non trivial examples of groupoids-sets.

\begin{Example}\label{exam: HG}Let $\upphi\colon \cH \!\to\! \cG$ be a morphism of groupoids. Consider the triple $(\Ho \due \times {\Sscript{\upphi_0}} { \Sscript{t}} \Ga, {\rm pr}_{\Sscript{1}}, \vartheta)$, where $\vartheta\colon \Ho \due \times {\Sscript{\upphi_0}} { \Sscript{t}} \Ga \to \Go$ sends $(u,a) \mapsto s(a)$, and ${\rm pr}_{\Sscript{1}}$ is the first projection. Then the following maps
\begin{gather*}
\xymatrix@R=0pt{ \big(\Ho \due \times {\Sscript{\upphi_0}} { \Sscript{t}} \Ga\big) \due \times {\Sscript{\vartheta}} { \Sscript{t}} \Ga \ar@{->}^-{}[r] & \Ho \due \times {\Sscript{\upphi_0}} { \Sscript{t}} \Ga, \\ \big((u,a),g\big)\ar@{|->}^-{}[r] & (u, ag), } \qquad \xymatrix@R=0pt{ \Ha \due \times {\Sscript{s}} { \Sscript{{\rm pr}_1}} \big(\Ho \due
\times {\Sscript{\upphi_0}} { \Sscript{t}} \Ga\big) \ar@{->}^-{}[r] & \Ho \due \times {\Sscript{\upphi_0}} { \Sscript{t}} \Ga, \\ \big(h, (u,a)\big) \ar@{|->}^-{}[r] & (t(h),\phia(h)a) }
\end{gather*}
define, respectively, a structure of right $\cG$-sets and that of left $\cH$-set. Analogously, the maps
\begin{gather*}
\xymatrix@R=0pt{ \big(\Ga \due \times {\Sscript{s}} { \Sscript{\upphi_0}} \Ho\big) \due \times {\Sscript{{\rm pr}_2}} { \Sscript{t}} \Ha \ar@{->}^-{}[r] & \Ga \due \times
{\Sscript{s}} { \Sscript{\upphi_0}} \Ho, \\ \big((a,u),h \big)\ar@{|->}^-{}[r] & (a\phia(h), s(h)), } \qquad
\xymatrix@R=0pt{ \Ga \due \times {\Sscript{s}} { \Sscript{\varsigma}} \big(\Ga \due \times {\Sscript{s}} { \Sscript{\upphi_0}} \Ha\big) \ar@{->}^-{}[r] & \Ga \due \times
{\Sscript{s}} { \Sscript{\upphi_0}} \Ho, \\ \big(g, (a,u)\big)\ar@{|->}^-{}[r] & (ga,u), }
\end{gather*}
where $\varsigma\colon \Ga \due \times {\Sscript{s}} { \Sscript{\upphi_0}} \Ho \to \Go$ sends $(a,u) \mapsto t(a)$, define, respectively, a right $\cH$-set and left $\cG$-set structures on $\Ga \due \times {\Sscript{s}} { \Sscript{\upphi_0}} \Ho$. This in particular can be applied to the morphisms described in Examples~\ref{exam:action} and~\ref{exam:X}(1). More precisely, keeping the notation of these two examples, then in the first one, we have that $X \due \times {\Sscript{\sigma}} { \Sscript{{\rm pr}_1}} (Y \times G) =\big\{ (x, (\sigma(x),g)) \,|\, x \in X, g \in G \big\} $ is a right groupoid-set with structure map $(x, (\sigma(x),g)) \mapsto \sigma(xg)$ and action $(x, (\sigma(x),g)) (y, g' )= (x, (\sigma(x),gg'))$, whenever $\sigma(xg)=y$. Moreover, this set is also a left groupoid-set with structure map $(x, (\sigma(x),g)) \mapsto x$ and action $(x',g') (x, (\sigma(x),g)) = (x', (\sigma(x'),g'g))$, whenever $\sigma(x'g')=\sigma(x)$. Concerning the second example, we have that the set $(Y\times Y) \due \times {\Sscript{{\rm pr}_1}} { \Sscript{f}} X =\big\{ (y,f(x)),x)| x \in X, y \in Y\big\}$ is a left groupoid-set with structure map sending $((y,f(x)),x) \mapsto y$ and action $(y',y) ((y, f(x)),x) = ((y',f(x)),x)$, while its right groupoid-set structure is the trivial one.
\end{Example}

\subsection{Translation groupoids and the orbits sets}\label{ssec:Grpd12}
\looseness=-1 Let $\cG$ be a groupoid and $(X,\varsigma)$ a right $\cG$-set. Consider the pair of sets $\big( X \due \times {\Sscript{\varsigma}} { \Sscript{{t}}} \Ga, X \big)$ as a~groupoid with structure maps ${s}=\rho$, $t={\rm pr}_{\Sscript{1}}$, $\iota_{\Sscript{x}}=(x,\iota_{\varsigma(x)})$. The multiplication and the inverse maps are defined by $(x,g)(x',g')=(x,gg')$ and $(x,g)^{-1}=\big(xg,g^{-1}\big)$. This groupoid is denoted by $X \rJoin \cG$ and it is known in the literature as the \emph{right translation groupoid of~$X$ by~$\cG$} (or \emph{semi-direct product groupoid}, see for instance \cite[p.~163]{Moedijk/Mrcun:2005} and \cite{Kaoutit/Kowalzig:14}). Furthermore, there is a canonical morphism of groupoids $\upsigma\colon X\rJoin \cG \to \cG$, given by the pair of maps $\upsigma=({\rm pr}_{\Sscript{2}}, \varsigma)$. Clearly any $\cG$-equivariant map $F\colon (X,\varsigma) \to (X',\varsigma')$, induces a morphism $\Sf{F}\colon X \rJoin \cG \to X' \rJoin \cG$ of groupoids, whose arrows map is given by $X \due \times {\Sscript{\varsigma}} { \Sscript{{t}}} \Ga \to X' \due \times {\Sscript{\varsigma'}} { \Sscript{{t}}} \Ga $, $(x, g) \mapsto (F(x), g)$, and its objects map is $F\colon X \to X'$.

\begin{Example}\label{exam:refereePacience}Any groupoid $\cG$ can be seen as a (right) translation groupoid of $\Go$ along $\cG$ itself. Thus, the right translation groupoid of the right $\cG$-set $(\Go, id)$ coincides (up to a canonical iso) with $\cG$ itself. Now, let $G$ be a group and $X$ a right $G$-set. Then the attached action groupoid, as described in Example~\ref{exam:action}, is precisely the right translation groupoid of~$X$ along $G$, where $G$ is considered as a groupoid with one object.
\end{Example}

Next we recall the notion of the orbit set attached to a right groupoid-set. This notion is a~generalization of the orbit set in the context of group-sets. Here we use the (right) translation groupoid to introduce this set.
First we recall the notion of the orbit set of a given groupoid. \emph{The orbit set of a groupoid} $\cG$ is the quotient set of $G_{\Sscript{0}}$ by the following equivalence relation: Two objects $x, x' \in G_{\Sscript{0}}$ are said to be equivalent if and only if there is an arrow connecting them, that is, there is $g \in\Ga$ such that $t(g)=x$ and $s(g)=x'$. Viewing $x, x' \in \Go$ as elements in the right (or left) $\cG$-set $(\Go, {\rm id})$, then this means that $x$ and $x'$ are equivalent if and only if, there exists $g \in \Ga$ such that $x . g=x'$. The quotient set of~$\Go$ by this equivalence relation, is nothing but the set of all connected components of $\cG$, which we denote by $\pi_{\Sscript{0}}(\cG):=\Go/\cG$.

Given a right $\cG$-set $(X,\varsigma)$, the \emph{orbit set} $X/\cG$ of $(X,\varsigma)$ is the orbit set of the (right) translation groupoid $X \rJoin \cG$, that is, $X/\cG=\pi_{\Sscript{0}}(X \rJoin \cG)$. If $\cG=(X\times G, X)$ is an action groupoid as in Example~\ref{exam:action}, then obviously the orbit set of this groupoid coincides with the classical set of orbits $X/G$, see also Example~\ref{exam:refereePacience}. Of course, the orbit set of an equivalence relation groupoid $(\cR, X)$, see Example~\ref{exam:X}, is precisely the quotient set $X/\cR$ modulo the equivalence relation $\cR$.

\begin{Remark}\label{rem:R} In this remark we exhibit the connection between a given groupoid and its attached equivalence relation groupoid. So, let $\cG$ be a groupoid and consider the pair of maps $\big( (t, s), {\rm id}\big)\colon (\Ga, \Go) \to (\Go \times \Go, \Go)$, where the first component sends $g \mapsto (t(g), s(g))$. The pair $\big( (t, s), {\rm id}\big)$ establishes a morphism of groupoids from $\cG$ to the groupoid of pairs $(\Go \times \Go, \Go)$. Now, denotes by $\cR$ the equivalence relation defined as above by the action of $\cG$ on $\Go$, that is, for a given pair of objects $ x, x' \in \Go$, we have that $x\sim_{\Sscript{\cR}} x'$, if and only if, there is an arrow $g \in \Ga$ such that $s(g)=x$ and $t(g)=x'$. In this way, we obtain another groupoid, namely, the equivalence relation groupoid $(\cR, \Go)$ as in Example~\ref{exam:X}(3).

These three groupoids are connected by the following commutative diagram of groupoids:
\begin{gather}\label{diag:not}
\begin{gathered}
\xymatrix@R=12pt{ & & \Ga \ar@{->}_-{\Sscript{(t,s)}}[lld] \ar@{->>}^{}[dd] |!{[dll];[drr]} \hole
\ar@/^1pc/@<0.80ex>@{->}|-{\scriptscriptstyle{{s}}}[rrd]
\ar@/^1pc/@<-0.80ex>@{->}|-{\scriptscriptstyle{{t}}}[rrd] & & \\
\Go \times \Go \ar@<0.80ex>@{->}|-{\scriptscriptstyle{{s}}}[rrrr]
\ar@<-0.80ex>@{->}|-{\scriptscriptstyle{{t}}}[rrrr] & & & &
\ar@{->}|-{\scriptscriptstyle{{\iota}}}[llll] \Go.
\ar@/^1pc/@{->}|-{\scriptscriptstyle{{\iota}}}[dll]
\ar@/_1pc/@{->}|-{\scriptscriptstyle{{\iota}}}[ull] \\ & & \cR
\ar@/_1pc/@<0.80ex>@{->}|-{\scriptscriptstyle{{s}}}[rru]
\ar@/_1pc/@<-0.80ex>@{->}|-{\scriptscriptstyle{{t}}}[rru]
\ar@{_{(}->}[llu] & & }
\end{gathered}
\end{gather}
More precisely, we already know from Example \ref{exam:X}(3) that there is a morphism of groupoids $(\cR, \Go) \to (\Go \times \Go, \Go)$, and since the image of $(t,s)$ lands in $\cR$, we obtain the vertical morphism of groupoids, whose arrow map is by definition surjective. If in diagram \eqref{diag:not} the lower left hand map is an identity, i.e., if $\cR=\Go \times \Go$, then $\cG$ posses only one connected component. Thus~$\pi_{\Sscript{0}}(\cG)$ is a set with one element, and this happens if and only if $\cG$ is a \emph{transitive groupoid}.

Summing up, the vertical map in diagram~\eqref{diag:not} is injective, if and only if, $\cG \cong (\cR, \Go)$ an isomorphism of groupoids, if and only if, $\cG$ has no parallel arrows, that is, none of the forms

\begin{center}
\begin{tikzpicture}[x=11pt,y=11pt,thick]\pgfsetlinewidth{0.5pt}
\node(1) at (-6,0){$\bullet$};
\node(2) at (0,0) {$\bullet$};

\draw[->] (1) to [out=30, in=150] (2);
\draw[->] (1) to [out=-30, in=-150] (2);
\end{tikzpicture} \qquad
\begin{tikzpicture}[x=11pt,y=11pt,thick]\pgfsetlinewidth{0.5pt}
\node(1) at (0,0) {$\bullet$};

\draw[->] (1) to [out=150, in=30, loop] (1);
\draw[->] (1) to [out=-30, in=-150, loop] (1);
\end{tikzpicture}
\end{center}
As a conclusion, a groupoid is an equivalence relation one, if and only if, its has no parallel arrows.
\end{Remark}

\subsection{Bisets, two sided translation groupoid and the tensor product}\label{ssec:biset}

Let $\cG$ and $\cH$ be two groupoids and $(X, \vartheta,\varsigma)$ a triple consisting of a set $X$ and two maps $\varsigma \colon X \to \Go$, $\vartheta\colon X \to \Ho$. The following definitions are abstract formulations of those given in~\cite{Jelenc:2013, Moedijk/Mrcun:2005} for topological and Lie groupoids, see also \cite{ElKaoutit:2015, Kaoutit/Spinosa:2018}.

\begin{Definition}\label{def:biset}The triple $(X,\vartheta,\varsigma)$ is said to be an \emph{$(\cH,\cG)$-biset} (or \emph{groupoid-bisets}) if there is a left $\cH$-action $\lambda\colon \Ha \due \times {\Sscript{{s}}} { \Sscript{\vartheta}} X \to X$ and right $\cG$-action $\rho\colon X \due \times {\Sscript{\varsigma}} { \Sscript{{t}}} \Ga \to X$ such that
\begin{enumerate}\itemsep=0pt\samepage
\item For any $x \in X$, $h \in \cH_{\Sscript{1}}$, $g \in \cG_{\Sscript{1}}$ with $\vartheta(x)={s}(h)$ and $\varsigma(x)={t}(g)$, we have
\begin{gather*} \vartheta(xg) =\vartheta(x)\qquad \text{and}\qquad \varsigma(hx)=\varsigma(x).\end{gather*}
\item For any $ x \in X$, $h \in \cH_{\Sscript{1}}$ and $ g \in \cG_{\Sscript{1}}$ with $\varsigma(x)={t}(g)$, $\vartheta(x)={s}(h)$, we have $h(xg) = (hx)g$.
\end{enumerate}
\end{Definition}

In analogy with that was mentioned in Remark~\ref{rem:GrpdSets}, groupoids-bisets can be also realized as functors from the Cartesian product of groupoids to the core category of sets. Thus, the category of groupoid-bisets is isomorphic (as a symmetric monoidal category) to the category of (right) groupoid-sets over the Cartesian product groupoid, see \cite[Proposition~3.12]{Kaoutit/Spinosa:2018}.

\emph{The two sided translation groupoid} associated to a given $(\cH, \cG)$-biset $(X,\varsigma, \vartheta)$ is defined to be the groupoid $\cH \lJoin X \rJoin \cG$ whose set of objects is $X$ and set of arrows is
\begin{gather*}
\cH_{\Sscript{1}} \due \times {\Sscript{{s}}} {\Sscript{\vartheta}} X \due \times {\Sscript{\varsigma}}{\Sscript{{s}}} \cG_{\Sscript{1}} = \big\{ (h,x,g) \in\cH_{\Sscript{1}}\times X \times \cG_{\Sscript{1}}\,|\, {s}(h)= \vartheta(x), {s}(g)=\varsigma(x) \big\}.
\end{gather*}
The structure maps are
\begin{gather*}
{s}(h,x,g)=x,\qquad {t}(h,x,g)=hxg^{-1}\qquad \text{and}\qquad \iota_{\Sscript{x}}=(\iota_{\Sscript{\vartheta(x)}}, x,\iota_{\Sscript{\varsigma(x)}}).
\end{gather*}
The multiplication and the inverse are given by:
\begin{gather*}
(h,x,g) (h',x',g') = (hh',x',gg'),\qquad (h,x,g)^{-1}=\big(h^{-1}, hxg^{-1}, g^{-1}\big).
\end{gather*}

The \emph{orbit space of} $X$, is the quotient set $X / (\cH,\cG)$ defined using the equivalence relation $x \sim x'$, if and only if, there exist $h \in \Ha$ and $g \in \Ga$ with $s(h)=\vartheta(x)$ and $t(g)=\varsigma(x')$, such that $hx = x'g$. Thus it is the set of connected components of the associated two translation groupoid.

\begin{Example}\label{exam:bisets}Let $\upphi\colon \cH \to \cG$ be a morphism of groupoids. Consider, as in Example \ref{exam: HG}, the associated triples $(\Ho \due \times {\Sscript{\upphi_0}} { \Sscript{t}} \Ga, \vartheta, {\rm pr}_{\Sscript{1}},)$ and $(\Ga \due \times {\Sscript{s}} { \Sscript{\upphi_0}} \Ho, {\rm pr}_{\Sscript{2}}, \varsigma)$. These are $(\cH, \cG)$-biset and $(\cG,\cH)$-biset,, respectively.
\end{Example}

Next we recall the definition of the \emph{tensor product of two groupoid-bisets}, see for instance~\cite{ElKaoutit:2015, Kaoutit/Spinosa:2018} or~\cite{Kaoutit/Kowalzig:14}. Fix three groupoids $\cG$, $\cH$ and $\cK$. Given $(Y, \varkappa,\varrho)$ and $(X, \vartheta,\varsigma)$, a~$(\cG, \cH)$-biset and $(\cH,\cK)$-biset, respectively. Consider the map $\upomega: Y \due \times {\Sscript{\varrho}}{ \Sscript{\vartheta}} X \to \Ho$ sending $(y,x) \mapsto \varrho(y)=\vartheta(x)$. Then the pair $\big(Y \due \times {\Sscript{\varrho}}{ \Sscript{\vartheta}} X, \upomega \big)$ admits a structure of right $\cH$-set with action
\begin{gather*}
\big(Y \due \times {\Sscript{\varrho}}{ \Sscript{\vartheta}} X\big) \due \times {\Sscript{\upomega}}{ \Sscript{t}} \Ha \longrightarrow \big(Y \due \times {\Sscript{\varrho}}{
\Sscript{\vartheta}} X\big),\qquad \big( \big((y,x), h\big) \longmapsto \big(yh,h^{-1}x\big) \big).
\end{gather*}
Following the notation and the terminology of \cite[Remark 2.12]{ElKaoutit:2015}, we denote by $\big(Y \due \times {\Sscript{\varrho}}{ \Sscript{\vartheta}} X\big) / \cH:= Y \tensor{\cH}X$ the orbit set of the right $\cH$-set $\big(Y \due \times {\Sscript{\varrho}}{ \Sscript{\vartheta}} X, \upomega\big)$. We refer to $Y\tensor{\cH}X$ as \emph{the tensor product over $\cH$} of $Y$ and $X$. It turns out that $Y\tensor{\cH}X$ admits a structure of $(\cG,\cK)$-biset whose structure maps are given as follows. First, denote by $y\tensor{\cH}x$ the equivalence class of an element $(y,x) \in Y \due \times {\Sscript{\varrho}}{ \Sscript{\vartheta}} X$. That is, we have $yh\tensor{\cH}x=y\tensor{\cH}hx$ for every $h \in \Ha$ with $\varrho(y)=t(h)=\vartheta(x)$. Second, one can easily check that, the maps
\begin{gather*}
\td{\varkappa}\colon \ Y \tensor{\cH} X \to \Go, \quad \big( y\tensor{\cH}x \longmapsto \varkappa(y) \big) ;\qquad \td{\varsigma}\colon \ Y \tensor{\cH} X \to \Ko, \quad \big( y\tensor{\cH}x \longmapsto \varsigma(x) \big)
\end{gather*}
are well defined, in such a way that the following ones
\begin{gather*}
\big(Y \tensor{\cH} X\big) \due \times {\Sscript{\td{\varkappa}}}{ \Sscript{t} } \Ka \longrightarrow Y \tensor{\cH} X, \qquad \big( \big( y\tensor{\cH}x, k\big) \longrightarrow y\tensor{\cH}xk \big), \\
\Ga \due \times {\Sscript{s}}{ \Sscript{\td{\varkappa}} } \big(Y \tensor{\cH} X\big) \longrightarrow \big(Y \tensor{\cH}X\big), \qquad \big( \big(g, y\tensor{\cH}x\big) \longrightarrow gy\tensor{\cH}x \big)
\end{gather*}
define a structure of $(\cG,\cK)$-biset on $Y\tensor{\cH}X$, as claimed.

\subsection{Normal subgroupoids and quotients}\label{ssec:normal}
Given a morphism of groupoids $\upphi\colon \cH \to \cG$, we define \emph{the kernel of $\upphi$} and denote by $\Ker{\upphi}$ (or by $\upphi^{\Sscript{k}}\colon \Ker{\upphi} \hookrightarrow \cH$), the groupoid whose underlying category is a subcategory of $\cH$ given by following pair of sets:
\begin{gather*}
\Ker{\upphi}_{\Sscript{0}}=\Ho,\qquad \Ker{\upphi}_{\Sscript{1}}=\big\{ h \in \Ha\,|\, \phia(h)=\iota_{\Sscript{ \upphi_0( s(h))}}= \iota_{\Sscript{\upphi_0( t(h))}}\big\}.
\end{gather*}

In other words, $\Ker{\upphi}$ is the subcategory of $\cH$ whose arrows are mapped to identities by~$\upphi$. In particular, the isotropy groups of $\Ker{\upphi}$ coincide with the kernels of the isotropy maps. Thus, we have that
\begin{gather*}
\Ker{\upphi}^{\Sscript{u}}= {\rm Ker}\big(\upphi^{\Sscript{u}}\colon \cH^{\Sscript{u}} \to \cG^{\Sscript{\upphi_{\Sscript{0}}(u)}} \big),\qquad \text{ for any object } u \in \Ho.
\end{gather*}
Furthermore, for any arrow $h \in \Ha$, we have that
\begin{gather*}
\Ad{h}\big( \Ker{\upphi}^{\Sscript{s(h)}} \big) = \Ker{\upphi}^{\Sscript{t(h)}},
\end{gather*}
where $\Ad{h}$ is the adjoint operator of $h$ defined in \eqref{Eq:Ad}. These properties motivate the following definition.
\begin{Definition}\label{def:Normal}Let $\cH$ be a groupoid. A \emph{normal subgroupoid} of $\cH$ is a subcategory $\cN \hookrightarrow \cH$ such that
\begin{enumerate}[(i)]
\item $\cN_{\Sscript{0}} =\cH_{\Sscript{0}}$;
\item For every $h \in \Ha$, we have $\Ad{h}\big(\cN^{\Sscript{s(h)}}\big) = \cN^{\Sscript{t(h)}} $ as subgroups of $\cH^{\Sscript{t(h)}}$.
\end{enumerate}
\end{Definition}
Notice that given a normal subgroupoid $\cN$ of $\cH$, then each of the isotropy groups $\cN^{\Sscript{u}}$, $u \in \No$, is a normal subgroup of $\cH^{\Sscript{u}}$. In particular the isotropy groupoid $\cN^{\Sscript{(i)}}$ of $\cN$, as defined in Example~\ref{exam:IsotropyGrpd}, is a normal subgroupoid of the isotropy groupoid $\cH^{\Sscript{(i)}}$ of $\cH$.
\begin{Example}\label{exam:Normal}
As we have seen above the kernel of any morphism of groupoids is a normal subgroupoid. The converse also holds true (see Proposition~\ref{prop:quotient} below). On the other hand, if $H \lhd G$ is a normal subgroup, then $(X \times H \times X, X)$ is clearly a normal subgroupoid of the induced groupoid $(X \times G \times X, X)$, see Example~\ref{exam:induced}. Now taking $\cR$ any equivalence relation on a set $X$, and consider the associated groupoid as in Example~\ref{exam:X}. Then $(\cR,X)$ is a normal subgroupoid of the groupoid of pairs $(X\times X, X)$.
\end{Example}

Next we recall the construction of the quotient groupoid from a given normal subgroupoid. Let $\cN$ be a normal subgroupoid of~$\cH$. Clearly $(\Ha,s)$ and~$(\Ha,t)$ are, respectively, right $\cN$-set and left $\cN$-set with actions given by the multiplication of $\cH$:
\begin{gather*}
\Ha \due \times {\Sscript{s}} { \Sscript{t}} \cN_{\Sscript{1}} \longrightarrow \Ha,\qquad \big((h,e) \longmapsto
he \big) \qquad \Na \due \times {\Sscript{s}} { \Sscript{t}} \cH_{\Sscript{1}} \longrightarrow \Ha,\qquad \big((e,h) \longmapsto eh \big).
\end{gather*}
Therefore $(\Ha, s, t)$ is a $\cN$-biset in the sense of Section~\ref{ssec:biset}. We denote its orbit set by~$\Ha/\cN$. That is, the quotient set of $\Ha$ modulo the equivalence relation $h \sim h' \Leftrightarrow \exists\, (e,h,e') \in \cN_{\Sscript{1}} \due \times {\Sscript{s}} { \Sscript{t}} \Ha $ $\due \times {\Sscript{s}}{ \Sscript{s}} \cN_{\Sscript{1}}$ such that $eh = h'e'$. On the other hand, we can consider the quotient set of $\Ho$ modulo the relation: $u \sim u' \Leftrightarrow \exists\, e \in \cN_{\Sscript{1}}$ such that $s(e)=u$ and $t(e)=u'$. Denotes by $\cH_{\Sscript{0}}/\cN$ the associated quotient set and by $\cH/\cN:=\big(\cH_{\Sscript{1}}/\cN, \cH_{\Sscript{0}}/\cN \big)$ the pair of sets, which going to be \emph{the quotient groupoid}.

\begin{Proposition}\label{prop:quotient}Let $\cN$ be a normal subgroupoid of~$\cH$. Then the pair of orbit sets $\cH/\cN$ admits a structure of groupoid such that there is a ``sequence'' of groupoids:
\begin{gather*}
\xymatrix{\cN \ar@{^{(}->}[r] & \cH \ar@{->>}[r] & \cH/\cN.}
\end{gather*}
Furthermore, any morphism of groupoids $\upphi\colon \cH \to \cG$ with $\cN \subseteq \Ker{\upphi} $, factors uniquely as
\begin{gather*}
\xymatrix{ \cH \ar@{->}^-{\upphi}[r] \ar@{->>}_-{\uppi}[rd] & \cG
\\ & \cH/\cN. \ar@{-->}_-{\bara{\phi}}[u]. }
\end{gather*}
\end{Proposition}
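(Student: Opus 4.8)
The goal is to construct the quotient groupoid $\cH/\cN$ and verify the universal property. My plan is to build the structure maps on the pair of orbit sets $\big(\Ha/\cN, \Ho/\cN\big)$ by descending the corresponding maps of $\cH$ along the quotient projections, and then check that these are well defined, i.e., independent of the chosen representatives. Write $[h]$ for the class of an arrow $h \in \Ha$ in $\Ha/\cN$ and $[u]$ for the class of an object $u \in \Ho$ in $\Ho/\cN$. Since the source and target of $\cH$ intertwine with the two relations on arrows and objects, I would define $s([h]) = [s(h)]$, $t([h]) = [t(h)]$, and $\iota_{\Sscript{[u]}} = [\iota_{\Sscript{u}}]$; the inverse is $[h]^{-1} = [h^{-1}]$. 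The key point to check here is well-definedness: if $h \sim h'$ via $eh = h'e'$ with $e, e' \in \Na$, then $s(e) = t(e')$-type compatibilities together with $\cN_{\Sscript{0}} = \cH_{\Sscript{0}}$ force $s(h) \sim s(h')$ and $t(h) \sim t(h')$ as objects, so source and target descend.

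\emph{The main obstacle} will be defining the multiplication on $\Ha/\cN$ and proving it is well defined, since composability is not literally preserved under the equivalence relation. The plan is to declare $[h][k] := [hk]$ whenever $[s(h)] = [t(k)]$, but two difficulties arise. First, the composite $hk$ may not exist on the nose even when $[s(h)] = [t(k)]$, because $s(h)$ and $t(k)$ are only equivalent, not equal; so I must first replace $k$ by an equivalent arrow $k'$ with $t(k') = s(h)$, which is possible precisely because $s(h) \sim t(k)$ supplies a normal arrow $e \in \Na$ realizing the equivalence, and then set $k' = ek$ (after adjusting sources via normality). Second, I must show the resulting class $[hk']$ is independent of all choices. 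This is exactly where \emph{normality} of $\cN$ (condition (ii) of Definition~\ref{def:Normal}, $\Ad{h}(\cN^{\Sscript{s(h)}}) = \cN^{\Sscript{t(h)}}$) is essential: if I change $h$ to $e_{\Sscript{1}} h e_{\Sscript{1}}'$ and $k$ to $e_{\Sscript{2}} k e_{\Sscript{2}}'$ with normal arrows $e_i, e_i'$, then rewriting the product and sliding the normal arrows past $h$ and $k$ using the adjoint identity shows the two composites differ by multiplication by elements of $\cN$ on the left and right, hence lie in the same orbit. I would carry out this bookkeeping to confirm associativity and the unit and inverse axioms, each of which reduces to the analogous identity in $\cH$ after passing to representatives.

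Having established that $\cH/\cN$ is a groupoid, the exactness ``sequence'' $\cN \hookrightarrow \cH \twoheadrightarrow \cH/\cN$ is immediate: the inclusion is the given subcategory, and the projection $\uppi = (\uppi_{\Sscript{1}}, \uppi_{\Sscript{0}})$ sending $h \mapsto [h]$ and $u \mapsto [u]$ is a morphism of groupoids by the very way the structure maps were defined, and it is surjective on both arrows and objects by construction. Moreover an arrow $h$ lies in $\Na$ exactly when $[h] = [\iota_{\Sscript{s(h)}}]$ with $[s(h)] = [t(h)]$, so $\cN$ is carried into the identities of $\cH/\cN$, as the sequence requires.

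Finally, for the universal property, suppose $\upphi \colon \cH \to \cG$ satisfies $\cN \subseteq \Ker{\upphi}$. I would define $\bara{\phi}$ on objects by $[u] \mapsto \phio(u)$ and on arrows by $[h] \mapsto \phia(h)$. Well-definedness follows directly from the hypothesis: if $u \sim u'$ via $e \in \Na \subseteq \Ker{\upphi}_{\Sscript{1}}$, then $\phia(e)$ is an identity, forcing $\phio(u) = \phio(u')$; similarly if $eh = h'e'$ with $e, e' \in \Na$, applying $\phia$ and using that $\phia(e), \phia(e')$ are identities gives $\phia(h) = \phia(h')$. That $\bara{\phi}$ is a functor and satisfies $\bara{\phi} \circ \uppi = \upphi$ is then a formal check on generators, and uniqueness is forced because $\uppi$ is surjective on arrows and objects, so any factorization must agree with $\bara{\phi}$ on every class. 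This completes the plan.
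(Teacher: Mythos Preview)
Your proposal is correct and follows essentially the same approach as the paper: define the structure maps of $\cH/\cN$ by descending those of $\cH$, invoke condition~(ii) of Definition~\ref{def:Normal} for the well-definedness of the multiplication, and obtain the factorization $\bara{\phi}$ by sending classes to images under $\upphi$. Your treatment is in fact more careful than the paper's terse argument, since you explicitly address the issue that representatives $h$, $k$ with $[s(h)]=[t(k)]$ need not be literally composable and must first be adjusted via an arrow of $\cN$---a point the paper's proof leaves implicit.
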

\begin{proof}The source, the target and the identity maps of $\cH/\cN$, are defined using those of $\cH$, that is, for a given arrow $\bara{h} \in \cH_{\Sscript{1}}/\cN$, we set $s\big(\bara{h}\big)=\bara{s(h)}$, $t(\bara{h})=\bara{t(h)}$ and $\iota_{\bara {\Sscript{u}}}=\bara{\iota_{\Sscript{u}}}$, for any $\bara{u} \in \Ho/\cN$. These are well defined maps, since they are independent form the chosen representative of the equivalence class. The multiplication is defined by
\begin{gather*}
\big(\cH/\cN\big) \due \times { \Sscript{\bara{s}}} {\Sscript{\bara{t}}} \big(\cH/\cN\big) \longrightarrow\big(\cH/\cN\big), \qquad \big( (\bara{h}, \bara{h'}) \longmapsto \bara{hh'} \big)
\end{gather*}
this is a well defined associative multiplication thanks to condition (ii) of Definition~\ref{def:Normal}. Lastly, the inverse of an arrow $\bara{h} \in \cH/\cN$ is given by the class of the inverse $\bara{h^{-1}}$. The canonical map $(h,u) \mapsto (\bara{h}, \bara{u})$ defines morphism of groupoids $\cH \to \cH/\cN$ whose kernel is $\cN \hookrightarrow \cH$. The proof of the rest of the statements is immediate.
\end{proof}

The fact that normal subgroups can be characterize as the invariant subgroups under the conjugation action, can be immediately extended to the groupoids context, as the following Lemma shows. But first let us observe that the conjugation operation of equation \eqref{Eq:Ad}, induces a left $\cH$-action on the set of objects of the isotropy groupoid $\Hi{}_{\Sscript{1}}$ with the structure map $\varsigma\colon \Hi{}_{\Sscript{1}}=\cup_{u \in \Ho}\cH^{\Sscript{u}} \to \Ho$ (source or the target of the isotropy groupoid $\Hi$). That is,
\begin{gather}\label{Eq:Gi}
\Ha \due \times {\Sscript{s}} { \Sscript{\varsigma}}\Hi{}_{\Sscript{1}} \longrightarrow \Hi{}_{\Sscript{1}},\qquad \big((h,l) \longmapsto hlh^{-1} \big)
\end{gather}
defines a left $\cH$-action on $\Hi{}_{\Sscript{1}}$.
\begin{Lemma}\label{lema:normalsugpd}Let $\cH$ be a groupoid and $\cN \hookrightarrow \cH$ a subcategory with $\No=\Ho$. Then $\cN$ is a~normal subgroupoids if and only if $\Ni{}_{\Sscript{1}}$ is an $\cH$-invariant subset of $\Hi{}_{\Sscript{1}}$ with respect to the action of equation~\eqref{Eq:Gi}.
\end{Lemma}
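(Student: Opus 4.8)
The plan is to reduce both sides of the asserted equivalence to a statement about the conjugation operators $\Ad{h}$, and then observe that the only difference between them is ``inclusion'' versus ``equality'', a gap that closes automatically once one uses that each arrow is invertible.

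First I would translate the invariance condition. By the definition of invariant subset recalled in Section~\ref{ssec:Grpd1}, saying that $\Ni{}_{\Sscript{1}}$ is an $\cH$-invariant subset of $\Hi{}_{\Sscript{1}}$ for the action~\eqref{Eq:Gi} means precisely that this action preserves $\Ni{}_{\Sscript{1}}$: for every $h \in \Ha$ and every $l \in \cN^{\Sscript{s(h)}}$ (so that the matching condition $s(h)=\varsigma(l)$ holds) the element $hlh^{-1}$ again lies in $\Ni{}_{\Sscript{1}}$. Since $hlh^{-1}=\Ad{h}(l)$ belongs to $\cH^{\Sscript{t(h)}}$, and since $\Ni{}_{\Sscript{1}} \cap \cH^{\Sscript{t(h)}}=\cN^{\Sscript{t(h)}}$ (the isotropy groups at distinct objects being disjoint inside $\Ha$), this is exactly the requirement that $\Ad{h}\big(\cN^{\Sscript{s(h)}}\big) \subseteq \cN^{\Sscript{t(h)}}$ for every $h \in \Ha$. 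On the other hand, $\cN$ is a normal subgroupoid precisely when condition (ii) of Definition~\ref{def:Normal} holds, namely $\Ad{h}\big(\cN^{\Sscript{s(h)}}\big)=\cN^{\Sscript{t(h)}}$ for every $h$, the object condition (i) being supplied by the hypothesis $\No=\Ho$. Hence the implication ``normal $\Rightarrow$ invariant'' is immediate, since an equality trivially yields the corresponding inclusion.

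The content lies in the converse, where I would concentrate. Assume the inclusion $\Ad{h}\big(\cN^{\Sscript{s(h)}}\big) \subseteq \cN^{\Sscript{t(h)}}$ for all arrows $h$. Applying this same hypothesis to the inverse arrow $h^{-1}$ and using $s(h^{-1})=t(h)$, $t(h^{-1})=s(h)$, I obtain $\Ad{h^{-1}}\big(\cN^{\Sscript{t(h)}}\big) \subseteq \cN^{\Sscript{s(h)}}$. Now $\Ad{h}\colon \cH^{\Sscript{s(h)}} \to \cH^{\Sscript{t(h)}}$ is an isomorphism of groups by~\eqref{Eq:Ad}, with inverse $\Ad{h^{-1}}$; applying $\Ad{h}$ to the last inclusion gives $\cN^{\Sscript{t(h)}}=\Ad{h}\big(\Ad{h^{-1}}(\cN^{\Sscript{t(h)}})\big) \subseteq \Ad{h}\big(\cN^{\Sscript{s(h)}}\big)$. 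Combined with the assumed inclusion in the opposite direction, this forces the equality $\Ad{h}\big(\cN^{\Sscript{s(h)}}\big)=\cN^{\Sscript{t(h)}}$, which is condition (ii), so $\cN$ is normal.

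The one point requiring care is the bookkeeping of sources and targets under passage to $h^{-1}$, together with the fact that $\Ad{h}$ and $\Ad{h^{-1}}$ are mutually inverse bijections; this is exactly the mechanism that upgrades a one-sided inclusion, valid for \emph{every} arrow simultaneously, into a genuine equality. No nontrivial obstacle beyond this is expected, as the remaining verifications are the routine unravelling of the definitions of the action~\eqref{Eq:Gi} and of invariance.
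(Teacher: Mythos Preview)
Your proof is correct and is precisely the kind of unraveling the paper's one-word proof ``Straightforward'' is pointing to: both conditions amount to statements about the conjugation operators $\Ad{h}$ (equality versus inclusion), and invertibility of arrows closes the gap. There is nothing to add.
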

\begin{proof}Straightforward.
\end{proof}

\section{Linear representations of groupoid. Revisited}\label{sec:rep}
We provide in this section the construction and the basic properties of the induction, restriction and co-induction functors attached to a morphism of groupoids, and connect the categories of linear representations. These properties are essential to follow the arguments presented in the forthcoming sections. The material presented here is probably well known to specialists, with the exception perhaps the result dealing with the characterization of linear representations of quotient groupoid that has its own interest. Nevertheless, we have preferred to give a self-contained and elementary exposition, which we think is accessible to wide range of the audience.

\subsection{Linear representations: basic properties}\label{ssec:Rep}
Given a groupoid $\cG$, we denote, as in Section~\ref{ssec:raroes}, by $\Rep{\cG}$ the category of all $\Bbbk$-linear $\cG$-representations. The $\Bbbk$-vector space of morphisms between two $\cG$-representations $\cV$ and $\cV'$, will be denoted by $\hom{\cG}{\cV}{\cV'}$.

To any representation $\cV$ one associated the functor $\tilde{\cV}\colon \cG \to \Sets$ by forgetting the $\Bbbk$-vector space structure of the representation. The same notation $\tilde{\tF}$ will be used for any morphism $\tF \in \hom{\cG}{\cV}{\cV'}$. The resulting functor $\td{(-)}\colon \Rep{\cG}=[\cG, \Vect] \to [\cG, \Sets]$ is called \emph{the forgetful functor}. The image of an object $x \in \Go$ by the representation $\cV$, is denoted by $\Vx=\cV(x)$. Given an arrow $g \in \Ga$, we denote by $\Vg\colon \cV_{\Sscript{s(g)}} \to \cV_{\Sscript{t(g)}}$ the image of~$g$ by~$\cV$.

\begin{Remark}\label{rem:FrameGrpd}As in the case of groups, a linear representation can be defined via a morphism of groupoids, see also \cite[Definition~1.11]{Renault:1980}. Namely, fix a groupoid $\cG$ and recall (see for instance \cite[p.~98]{Sternberg:1994}) that a ``\emph{vector bundle}'' or a $\Vect$-\emph{bundle} over $\Go$, is a disjoint union $E=\biguplus_{ x \in \Go}E_{\Sscript{x}}$ of $\Bbbk$-vector spaces with the canonical projection $\pi\colon E \to \Go$\footnote{This is a vector bundle (possibly with infinite dimensional fibers) in the topological sense \cite[Definition~2.1]{Karoubi:1978}, by taking the discrete topology on both sets $\Go$ and~$\Bbbk$.}. We denote a vector bundle over $\Go$ simply by $(E,\pi)$ and call the vector space $E_{\Sscript{x}}$ \emph{the fibre of $E$ at $x$}. In this way, the frame groupoid ${\rm Iso}(E,\pi)$ of $(E,\pi)$, defined in Example~\ref{exam:frame}, has $\Go$ as a set of objects, and the set of arrows from~$x$ to~$x'$ is determined by~${\rm Iso}(E,\pi)(x,x'):={\rm Iso}_{\Sscript{\Bbbk}}( E_{\Sscript{x}}, E_{\Sscript{x'}})$ the set of all $\Bbbk$-linear isomorphisms from $E_{\Sscript{x}}$ to $E_{\Sscript{x'}}$. In the same direction, any morphism of groupoids $\upmu\colon \cG \to {\rm Iso}(E,\pi)$ whose objects map is the identity $\upmu_{\Sscript{0}}={\rm id}_{\Sscript{\Go}}$, gives rise to a linear representation of $\cG$. Namely, the corresponding representation is given by the functor $\cE\colon \cG \to \Vect$ acting on objects by $x \mapsto E_{\Sscript{x}}$ (the fibre of $E$ at $x$) and on arrows by $g \mapsto \big[\cE^{\Sscript{g}}=\upmu_{\Sscript{1}}(g)\colon E_{\Sscript{s(g)}} \to E_{\Sscript{t(g)}}\big]$.

Conversely, assume we are given a $\cG$-representation $\cV$. Then the pair $(\bara{\cV}, \pi_{\Sscript{\cV}})$ which consists of the disjoint union $\bara{\cV}=\biguplus_{x \in \Go}\Vx$ and the projection $\pi_{\Sscript{\cV}}\colon \bar{\cV} \to \Go$, clearly defines a vector bundle over $\Go$. Moreover, we have a morphism of groupoids, defined by
\begin{gather}\label{Eq:rhoV}
\varrho_{\Sscript{\cV}}:=\upnu\colon \ \cG \longrightarrow {\rm Iso}(\bara{\cV}, \pi_{\Sscript{\cV}}), \qquad \upnu_{\Sscript{0}}(x)=x, \qquad \text{and} \qquad \upnu_{\Sscript{1}}(g)=\Vg,
\end{gather}
for every $x \in \Go$ and $g \in \Ga$.

On the other hand it is clear that for any $\cG$-representation $\cV$, the pair $(\bara{\cV}, \pi_{\Sscript{\cV}})$ with the map
\begin{gather}\label{Eq:actionV}
\Ga \due \times {\Sscript{s}} { \Sscript{\pi_{\cV}}} \bara{\cV} \longrightarrow \bara{\cV},\qquad \big((g,v) \longmapsto gv:=\Vg(v) \big)
\end{gather}
lead to a left $\cG$-set structure on the bundle $\bara{\cV}$, in the sense of the left version of Definition~\ref{def:Gset}. This in fact establishes a faithful functor from the category of $\cG$-representations to the category of left $\cG$-sets, which is in turn the composition of the forgetful functor $\td{(-)}$ and the functor discussed in the Remark~\ref{rem:GrpdSets}, that goes from the category of functors $[\cG, \Sets]$ to the category of left $\cG$-sets.
\end{Remark}

It is well known, see for instance~\cite{Mitchell:1965}, that the category $\Rep{\cG}$ is an abelian symmetric monoidal category with a set of small generators. The monoidal structure is extracted from that of $\Vect$, that is, for any two representations $\cU$ and $\cV$, their tensor product is the functor $\cU \tensor{}\cV\colon \cG \to \Vect$ defined by $(\cU\tensor{}\cV)_{\Sscript{x}}= \cU_{\Sscript{x}} \tensor{\Bbbk}\cV_{\Sscript{x}}$ and
$(\cU\tensor{}\cV)^{\Sscript{g}}= \cU^{\Sscript{g}}\tensor{\Bbbk} \cV^{\Sscript{g}}$, for every $x \in \Go$ and $g \in \Ga$.

The category $\Rep{\cG}$, is in particular locally small, in the sense that the class of subobjects of any object is actually a set. The zero representation well be denoted by $\bf 0$ and the identity representation (with respect to the tensor product), or the trivial representation, by~$\bf 1$. Moreover, to any representation one can associate its dual representation. Indeed, take a representation~$\cV$, for any object $x \in \Go$, set $(\cV^*)_{\Sscript{x}}:= (\cV_{\Sscript{x}})^*=\hom{\Bbbk}{\cV_{\Sscript{x}}}{\Bbbk}$ the linear dual of the $\Bbbk$-vector space~$\cV_{\Sscript{x}}$, and set $(\cV^*){}^g:= \big(\cV^{\Sscript{g^{-1}}}\big)^*$ for a given arrow~$g \in \Ga$. In this way, we obtain a representation~$\cV^*$ with a canonical morphism of representations $\cV^* \tensor{} \cV \to {\bf 1}$ fibrewise given by the evaluation maps $\cV^*_{\Sscript{x}} \tensor{\Bbbk} \Vx \to \Bbbk$ acting by $\varphi \tensor{\Bbbk}v \mapsto \varphi(v)$, for every $x \in \Go$.

We say that \emph{a representation $\cV \in \Rep{\cG}$ is finite}, when its image lands in the subcategory $\vect$ of finite dimensional $\Bbbk$-vector spaces. The full subcategory of finite representations is then an abelian symmetric rigid monoidal category.

\begin{Example}\label{exam:One}For instance a finite representation where each one of its fibres is a one-dimen\-sio\-nal $\Bbbk$-vector space can be identified with a~family of elements $\{ \lambda_{\Sscript{(s(g), t(g))}}\}_{\Sscript{ g \in \Ga}}$ in $\Bbbk^{\Sscript{\times}}$, the multiplicative group of~$\Bbbk$,
satisfying
\begin{gather*}
\lambda_{(s(g), t(g))} \lambda_{(s(h), t(h))} = \lambda_{(s(hg), t(hg))}, \qquad \text{whenever} \qquad t(g)=s(h), \qquad \text{and} \\ \lambda_{(x, x)} = 1_{\Sscript{\Bbbk}}, \qquad \text{for every} \quad x \in \Go.
\end{gather*}

In the second condition, the term $\lambda{\Sscript{(x,x)}}$ stands for the $\iota_{\Sscript{x}}$'s projection of $\lambda$. The family \linebreak $\{ \lambda_{\Sscript{(s(g), t(g))}}\}_{\Sscript{ g \in \Ga}}$ in $\Bbbk^{\Sscript{\times}}$, where $\lambda_{\Sscript{(s(g), t(g))}}=1_{\Sscript{\Bbbk}}$, for every $g \in \Ga$, corresponds then to the trivial representation $\bf 1$.
\end{Example}

To any representation $\cV \in \Rep{\cG}$, one can consider the projective and the inductive limits of its underlying functor, since this one lands in the Grothendieck category~$\Vect$. These $\Bbbk$-vector spaces, are denoted by $\prolimit{\cG}{\cV}$ and $\injlimit{\cG}{\cV}$, respectively.

Given a representation $\cV$ we can define as follows its $\cG$-invariant subrepresentation. For any $x \in \Go$, we set~$\Vgx$ the subspace of $\Vx$ invariant under the action of the isotropy group $\cG^{\Sscript{x}}$. That is,
\begin{gather*}
\Vgx=\big\{ v \in \Vx\,|\, \cV^{\Sscript{l}}(v):= l v=v, \text{ for all } l \in \cG^{\Sscript{x}} \big\}.
\end{gather*}
Now, take an arrow $g \in \Ga$, and a vector $v \in \Vgsg$. Then, for any $q \in \cG^{\Sscript{t(g)}}$, we have that
\begin{gather*}
q ( g v) = g \big( \big(g^{-1}q g\big) v \big) = g v.
\end{gather*}
Therefore, the image under the linear map $\cV^{\Sscript{g}}$ of any vector in $ \Vgsg$ lands in $ \Vgtg$. The same holds true interchanging $g$ by $g^{-1}$. This means that for any arrow $g \in
\Ga$, we have a commutative diagram
\begin{gather*}
\xymatrix{ \cV_{\Sscript{s(g)}}\ar@{->}^-{\Sscript{\cV^{\Sscript{g}}}}[rr] && \cV_{\Sscript{t(g)}}\\ \cV^{\Sscript{\cG}}_{\Sscript{s(g)}} \ar@{-->}^-{}[rr]
\ar@{^(->}^-{}[u] & & \cV^{\Sscript{\cG}}{}_{\Sscript{t(g)}}.\ar@{^(->}^-{}[u] }
\end{gather*}
In this way we obtain a representation $\cV^{\Sscript{\cG}}\colon \cG \to \Vect$ with a monomorphism $\cV^{\Sscript{\cG}} \hookrightarrow\cV$ in $\Rep{\cG}$. This representation is referred to as the \emph{$\cG$-invariant subrepresentation of $\cV$}.

\begin{Remark}\label{rem:invaraiant} If $\cG$ is a groupoid with only one object, that is a group, then $\cV^{\Sscript{\cG}} = \hom{\cG}{\bf 1}{\cV}$. In general, however, we can not directly relate this later vector space with the fibres of the $\cG$-invariant representation. More precisely, we have that $\prolimit{\cG}{\cV} = \hom{\cG}{\bf 1}{\cV}$ as vector spaces, and the following commutative diagram of vector spaces:
\begin{gather*}
\xymatrix@C=45pt{ 0 \ar@{->}^-{}[r] & \prolimit{\cG}{\cV} \ar@{->}^-{}[r] \ar@{-->}^-{}[dr] & \underset{x \in \Go}{\prod} \Vx \ar@{->}^-{\pi}[r] & \underset{g \in \Ga}{\prod} \cV_{\Sscript{t(g)}} \\ 0 \ar@{->}^-{}[r] & \prolimit{\cG}{\cV^{\Sscript{\cG}}} \ar@{->}^-{}[r] \ar@{->}^-{}[u] & \underset{x \in \Go}{\prod} \Vgx \ar@{->}^-{}[u] \ar@{->}^-{}[r] & \underset{g \in \Ga}{\prod} \cV_{\Sscript{t(g)}}, \ar@{->}^-{}[u] \\ & 0 \ar@{->}^-{}[u] & 0 \ar@{->}^-{}[u] &}
\end{gather*}
where for every $g \in \Ga$, we have $\pi{\Sscript{g}}= \Vg \circ p_{\Sscript{s(g)}} - p_{\Sscript{t(g)}}$ and the $p$'s are the canonical projections. Then the dashed monomorphism of vector spaces is not necessarily an isomorphism.
\end{Remark}

In a dual way, one defines the \emph{co-invariant representation}. Specifically, let $\cV$ be a $\cG$-representation, for any $x \in \Go$, we set the quotient $\Bbbk$-vector space
\begin{gather*}
\cV{\Sscript{\Gx}}: = \Vx/ {\rm Span}_{\Sscript{\Bbbk}} \big\{ ev-v\,|\, e \in \Gx, v \in \Vx \big\}.
\end{gather*}
Given now an arrow $g \in\Ga$, we have that the linear map $\cV^{\Sscript{g}}$ extend to the quotients, that is, we have a~commutative diagram
\begin{gather*}
\xymatrix{ \cV_{\Sscript{s(g)}}
\ar@{->}^-{\Sscript{\cV^{\Sscript{g}}}}[rr] \ar@{->>}^-{}[d] &&
\cV_{\Sscript{t(g)}} \ar@{->>}^-{}[d] \\ \cV_{\Sscript{\cG^{s(g)}}}
\ar@{-->}^-{\Sscript{\bara{\cV^{\Sscript{g}}}}}[rr] & &
\cV_{\Sscript{\cG^{t(g)}}}. }
\end{gather*}
Therefore, the family $\{(\cV_{\Sscript{\cG^{s(g)}}}, \bara{\cV^{\Sscript{g}}} )\}_{x \in, \Go}$ defines a representation which we denote by $\cV_{\Sscript{\cG}}$. In this way, we have a canonical epimorphism $\cV \twoheadrightarrow \cV_{\Sscript{\cG}}$ in the category $\Rep{\cG}$ given fibrewise by the linear map $\pi_{\Sscript{x}}\colon \Vx \twoheadrightarrow \cV_{\Sscript{\cG^x}}$. In analogy with group theory context, the representation $\cV_{\Sscript{\cG}}$ is referred to as \emph{the coinvariant quotient representation of $\cV$}.

\begin{Remark}\label{rem:coinv}Similar to Remark \ref{rem:invaraiant}, the coinvaraint representation $\cV_{\Sscript{\cG}}$ is related to the limit of the representation $\injlimit{\cG}{\cV}$ and also to the vector space
$\hom{\cG}{\cV}{\bf 1}$. More precisely, we have, form one hand, an isomorphism of $\Bbbk$-vector spaces
\begin{gather*}
\Big(\big(\injlimit{\cG}{\cV} \big)^* \longrightarrow
\hom{\cG}{\cV}{\bf 1}, \;\; \varphi \longmapsto \big( \varphi \circ
\zeta_{\Sscript{x}}\big)_{\Sscript{x \in \Go}} \Big); \\ \Big(
\hom{\cG}{\cV}{\bf 1} \longrightarrow \big(\injlimit{\cG}{\cV}
\big)^*,\;\;\injlimit{\cG}{f_{\Sscript{x}}} \longmapsto \big(
f_{\Sscript{x}}\big)_{\Sscript{x \in \Go}}\Big),
\end{gather*}
where $\{\zeta_{\Sscript{x}}\colon \cV_{\Sscript{x}} \to \injlimit{\cG}{\cV}\}_{\Sscript{x \in \Go}}$ are the structural maps of the stated limit. On the other hand, since each of the maps $\zeta_{\Sscript{x}}$ factors through the quotient $\cV_{\Sscript{\cG^x}}$, we have a family of maps $\bara{\zeta_{\Sscript{x}}}\colon \cV_{\Sscript{\cG^x}} \to \injlimit{\cG}{\cV}$ whose direct sum rends the following diagram
\begin{gather*}
\xymatrix@C=45pt{ \bigoplus_{g \in \Ga} \cV_{\Sscript{s(g)}}
\ar@{->}^-{\tau}[r] \ar@{->}^-{}[d] & \bigoplus_{x \in \Go}
\cV_{\Sscript{x}} \ar@{->}^-{\tau^{\Sscript{c}}}[r] \ar@{->}^-{}[d]
& \injlimit{\cG}{\cV} \ar@{->}^-{}[r] \ar@{->}^-{}[d] & 0 \\
\bigoplus_{g \in \Ga} \cV_{\Sscript{\cG^{s(g)}}}
\ar@{->}^-{}[r] & \bigoplus_{x \in \Go} \cV_{\Sscript{\cG^x}}
\ar@{->}^-{}[d] \ar@{-->}_-{\oplus_{\Sscript{x \in
\Go}}\bara{\zeta_{\Sscript{x}}} }[ru] \ar@{->}^-{}[r] &
\injlimit{\cG}{\cV_{\Sscript{\cG}}} \ar@{->}^-{}[d] \ar@{->}^-{}[r]
& 0 \\ & 0 & 0 & }
\end{gather*}
commutative, where $\tau$ is given by $\tau_{\Sscript{g}}=
\tau_{\Sscript{t(g)}} \circ \cV^{\Sscript{g}}-
\tau_{\Sscript{s(g)}}$, for every $g \in \Ga$. The dashed
epimorphism is then not necessarily an isomorphism.
\end{Remark}

We finish this subsection by the following observation.

\begin{Lemma}\label{lema:Hom}Let $\cG$ be a groupoid and $\cV$, $\cU$ two representations in $\Rep{\cG}$. Then the fa\-mily of $\Bbbk$-vector spaces $\big\{\hom{\Bbbk}{\Ux}{\Vx}\big\}_{x \in \Go }$ defines a~representation in $\Rep{\cG}$ denoted by $\hom{\Bbbk}{\cU}{\cV}$. In particular, if $\cU$ is a finite representation then
\begin{gather*}
\cV \tensor{} \cU^{*} \cong \hom{\Bbbk}{\cU}{\cV},
\end{gather*}
an isomorphism in the category $\Rep{\cG}$. Furthermore, for any $\cU$ and $\cV$, we have
\begin{gather*}
\prolimit{\cG}{\hom{\Bbbk}{\cU}{\cV}^{\Sscript{\cG}}} =\hom{\cG}{\cU}{\cV}.
\end{gather*}
\end{Lemma}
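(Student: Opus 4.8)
The plan is to handle the three assertions separately; each reduces to a fibrewise construction in $\Vect$ that is then checked to respect the $\cG$-action, so the work is essentially bookkeeping. For the representation structure, for each arrow $g\in\Ga$ I would set $\hom{\Bbbk}{\cU}{\cV}^{\Sscript{g}}\colon \hom{\Bbbk}{\cU_{\Sscript{s(g)}}}{\cV_{\Sscript{s(g)}}}\to\hom{\Bbbk}{\cU_{\Sscript{t(g)}}}{\cV_{\Sscript{t(g)}}}$, $f\mapsto \Vg\circ f\circ\Umg$; this is well defined because every arrow of a groupoid is an isomorphism, so $\Ug$ is invertible with inverse $\Umg$. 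Functoriality is then immediate: for an identity arrow the two flanking maps reduce to identities, while for composable $g,h$ with $s(h)=t(g)$ the functoriality of $\cU$ and $\cV$ together with $(hg)^{-1}=g^{-1}h^{-1}$ give $\cV^{\Sscript{hg}}\,f\,\cU^{\Sscript{(hg)^{-1}}}=\Vh\big(\Vg\,f\,\Umg\big)\cU^{\Sscript{h^{-1}}}$, which is exactly the composite of the two maps. This produces the object $\hom{\Bbbk}{\cU}{\cV}\in\Rep{\cG}$.

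For the isomorphism when $\cU$ is finite, for each $x\in\Go$ I would take the canonical map $\Theta_{\Sscript{x}}\colon \Vx\tensor{\Bbbk}\Ux^{*}\to\hom{\Bbbk}{\Ux}{\Vx}$, $v\tensor{\Bbbk}\varphi\mapsto\big(u\mapsto\varphi(u)v\big)$, which is a linear isomorphism precisely because $\Ux$ is finite dimensional. It remains to check that $\{\Theta_{\Sscript{x}}\}_{\Sscript{x\in\Go}}$ is natural. Recalling that $g$ acts on $\cV\tensor{}\cU^{*}$ by $\Vg\tensor{\Bbbk}(\cU^{*})^{\Sscript{g}}$ with $(\cU^{*})^{\Sscript{g}}=\big(\Umg\big)^{*}$ (the transpose $\varphi\mapsto\varphi\circ\Umg$), and on $\hom{\Bbbk}{\cU}{\cV}$ as above, I would evaluate both composites $\Theta_{\Sscript{t(g)}}\circ(\cV\tensor{}\cU^{*})^{\Sscript{g}}$ and $\hom{\Bbbk}{\cU}{\cV}^{\Sscript{g}}\circ\Theta_{\Sscript{s(g)}}$ on a simple tensor $v\tensor{\Bbbk}\varphi$: both send it to the map $u'\mapsto\varphi\big(\Umg(u')\big)\,\Vg(v)$ on $\cU_{\Sscript{t(g)}}$. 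Hence the $\Theta_{\Sscript{x}}$ assemble into an isomorphism in $\Rep{\cG}$.

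Finally, for the limit identity I would unwind $\prolimit{\cG}{\hom{\Bbbk}{\cU}{\cV}^{\Sscript{\cG}}}$ directly. A point of this limit is a family $(f_{\Sscript{x}})_{\Sscript{x\in\Go}}$, $f_{\Sscript{x}}\in\hom{\Bbbk}{\Ux}{\Vx}$, satisfying $\hom{\Bbbk}{\cU}{\cV}^{\Sscript{g}}(f_{\Sscript{s(g)}})=f_{\Sscript{t(g)}}$ for every $g\in\Ga$, i.e.\ $\Vg\circ f_{\Sscript{s(g)}}=f_{\Sscript{t(g)}}\circ\Ug$. This system, ranging over all arrows, is precisely the condition that $(f_{\Sscript{x}})$ be a natural transformation $\cU\Rightarrow\cV$, so the limit is $\hom{\cG}{\cU}{\cV}$; moreover each $f_{\Sscript{x}}$ is automatically $\cG^{\Sscript{x}}$-invariant, since specializing the equation to an isotropy arrow $l\in\cG^{\Sscript{x}}$ gives $\cV^{\Sscript{l}}f_{\Sscript{x}}=f_{\Sscript{x}}\cU^{\Sscript{l}}$, so imposing invariance fibrewise costs nothing. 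Equivalently, one may invoke Remark~\ref{rem:invaraiant} to write the limit as $\hom{\cG}{{\bf 1}}{\hom{\Bbbk}{\cU}{\cV}}$ and unwind a morphism out of the trivial representation.

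There is no genuine obstacle here: all three steps are routine verifications. The one point requiring care is the second step, where the dual representation carries the inverse $g^{-1}$ in its structure map and the source/target fibres must be tracked correctly so that the naturality square type-checks.
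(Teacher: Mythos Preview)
Your proposal is correct and follows essentially the same route as the paper. The paper defines the action on arrows by $\sigma\mapsto\Vg\circ\sigma\circ\Umg$, invokes the canonical fibrewise isomorphism $\Vx\tensor{\Bbbk}\Ux^{*}\cong\hom{\Bbbk}{\Ux}{\Vx}$ for finite $\Ux$, and for the limit identity builds the cone $\fk{z}_{\Sscript{x}}\colon\hom{\cG}{\cU}{\cV}\to\hom{\Bbbk}{\Ux}{\Vx}^{\Sscript{\cG^{x}}}$, $\alpha\mapsto\alpha_{\Sscript{x}}$, and asserts it is universal; your third step is the same computation organized in the reverse direction (describe the limit first, then observe invariance is forced by the isotropy arrows), which is a harmless reordering.
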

\begin{proof}The action of $\hom{\Bbbk}{\cU}{\cV}$ on a given arrow $g \in \Ga$, is defined by the following linear isomorphism
\begin{gather*}
\hom{\Bbbk}{\cU_{\Sscript{s(g)}}}{\cV_{\Sscript{s(g)}}}\longrightarrow \hom{\Bbbk}{\cU_{\Sscript{t(g)}}}{\cV_{\Sscript{t(g)}}}, \qquad \big( \sigma \longmapsto \Vg \circ \sigma \circ \Umg \big).
\end{gather*}
This clearly defines an object in $\Rep{\cG}$. If $\cU$ is a finite representation, then each fibre $\hom{\Bbbk}{\Ux}{\Vx}$ is linearly isomorphic to the $\Bbbk$-vector space $\Vx\tensor{\Bbbk}\Ux^*$. This family of linear isomorphisms leads in fact to the stated natural isomorphism.

For the proof of last statement, let us consider the following well defined map:
\begin{gather*}
\xymatrix@R=0pt{ \hom{\cG}{\cU}{\cV} \ar@{->}^-{\fk{z}_{\Sscript{x}} }[rr] & & \hom{\Bbbk}{\Ux}{\Vx}^{\Sscript{\cG^x}}, \\ \alpha \ar@{|->}^-{}[rr]
& & \alpha_{\Sscript{x}} }
\end{gather*}
for every $x \in \Go$. Each one of these maps is a $\Bbbk$-linear map where $\hom{\cG}{\cU}{\cV}$ is endowed with the structure of $\Bbbk$-vector space fibrewise inherited from that of $\cV$. This leads to a projective system which is in turn the universal one. Thus, $\hom{\cG}{\cU}{\cV}=\prolimit{\cG}{\hom{\Bbbk}{\cU}{\cV}^{\Sscript{\cG}}}$ as claimed.
\end{proof}

\subsection{The restriction functor}\label{ssec:Res}
Let $\upphi\colon \cH \to \cG$ be a morphism of groupoids. The \emph{restriction functor} is the functor defined by
\begin{gather}\label{Eq:restriction}
\phil\colon \ \Rep{\cG} \longrightarrow \Rep{\cH}, \qquad \big( \cV\longrightarrow \cV \circ \upphi;\ \tF \longrightarrow\tF_{\Sscript{\upphi}} \big),
\end{gather}
where the notation is the obvious one. In the subsequent we analyze the property of the restriction functor corresponding to a normal subgroupoid. Such a property is in fact a generalization of \cite[Proposition~2.3.2]{Kowalski:2014} and of course has its own interest in groupoids context.

\begin{Proposition}[representations of quotients]\label{prop:Res} Let $\cN$ be a normal subgroupoid of $\cH$ and denote by $\uppi\colon \cH \to \cG=\cH/\cN$ the canonical projection. Then the restriction functor induces an isomorphism of categories between $\Rep{\cG}$ and the full subcategory $\Rep{\cH}^{\Sscript{\cN}}$ of $\Rep{\cH}$ whose representations are trivial on $\cN$, that is, an object in $\Rep{\cH}^{\Sscript{\cN}}$ is a representation~$\cV$ of~$\cH$ such that $\cN \subset \Ker{\varrho_{\Sscript{\cV}}}$, where $\varrho_{\Sscript{\cV}}$ is as in equation~\eqref{Eq:rhoV}.
\end{Proposition}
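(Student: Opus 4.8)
The plan is to produce an explicit, strict two-sided inverse of the restriction functor $\uppi_{\Sscript{*}}\colon \Rep{\cG}\to \Rep{\cH}$, corestricted to the subcategory $\Rep{\cH}^{\Sscript{\cN}}$, so as to obtain a genuine isomorphism of categories and not merely an equivalence. First I would unwind the defining condition of $\Rep{\cH}^{\Sscript{\cN}}$: through equation~\eqref{Eq:rhoV}, the requirement $\cN \subseteq \Ker{\rhoW}$ says exactly that $\cW^{\Sscript{e}}$ is an identity for every $e \in \Na$. Since $\cW^{\Sscript{e}}\colon \cW_{\Sscript{s(e)}}\to \cW_{\Sscript{t(e)}}$, this simultaneously forces the fibres of $\cW$ to be literally constant along each $\cN$-orbit of objects and forces the transition maps attached to $\cN$-arrows to be trivial. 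That $\uppi_{\Sscript{*}}$ takes values in $\Rep{\cH}^{\Sscript{\cN}}$ is then immediate: for $\cV\in\Rep{\cG}$ and $e\in\Na$ the arrow $\uppi(e)$ is an identity of $\cG=\cH/\cN$, so $(\cV\circ\uppi)^{\Sscript{e}}=\cV^{\Sscript{\uppi(e)}}=\mathrm{id}$.

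For the inverse on objects, given $\cW\in\Rep{\cH}^{\Sscript{\cN}}$ I would descend it to a representation $\cV$ of $\cG$ by setting $\cV_{\Sscript{\bara{u}}}:=\cW_{\Sscript{u}}$ and $\cV^{\Sscript{\bara{h}}}:=\cW^{\Sscript{h}}$, in the spirit of the universal property of Proposition~\ref{prop:quotient}. The fibre assignment is unambiguous because, whenever $\bara{u}=\bara{u'}$, some $e\in\Na$ joins $u$ to $u'$ and $\cW^{\Sscript{e}}=\mathrm{id}$ gives $\cW_{\Sscript{u}}=\cW_{\Sscript{u'}}$. The crux is well-definedness on arrows: recalling from Section~\ref{ssec:normal} that $\bara{h}=\bara{h'}$ in $\cH_{\Sscript{1}}/\cN$ means $eh=h'e'$ for suitable $e,e'\in\Na$, triviality on $\cN$ yields $\cW^{\Sscript{h}}=\cW^{\Sscript{e}}\cW^{\Sscript{h}}=\cW^{\Sscript{eh}}=\cW^{\Sscript{h'e'}}=\cW^{\Sscript{h'}}\cW^{\Sscript{e'}}=\cW^{\Sscript{h'}}$, so $\cV^{\Sscript{\bara{h}}}$ is well defined; functoriality of $\cV$ is then inherited verbatim from that of $\cW$. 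By construction $\cV\circ\uppi=\cW$, and starting instead from $\cV\in\Rep{\cG}$ the descent of $\cV\circ\uppi$ returns $\cV$ on the nose, so the two object assignments are mutually inverse bijections.

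It remains to handle morphisms. Faithfulness of $\uppi_{\Sscript{*}}$ is clear, since $\uppi_{\Sscript{0}}$ is surjective and $\uppi_{\Sscript{*}}(f)_{\Sscript{u}}=f_{\Sscript{\bara{u}}}$. For fullness onto $\Rep{\cH}^{\Sscript{\cN}}$, I would take a morphism $g\colon \cV\circ\uppi\to\cV'\circ\uppi$ in $\Rep{\cH}$ and set $f_{\Sscript{\bara{u}}}:=g_{\Sscript{u}}$; applying the naturality of $g$ to an $\cN$-arrow $e$ from $u$ to $u'$, and using that both $\cV\circ\uppi$ and $\cV'\circ\uppi$ send $e$ to the identity, gives $g_{\Sscript{u}}=g_{\Sscript{u'}}$, so $f$ is well defined, while its naturality against a $\cG$-arrow $\bara{h}$ is a direct transcription of the naturality of $g$ against any lift $h\in\Ha$. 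Thus $\uppi_{\Sscript{*}}$ is full, faithful and bijective on objects onto $\Rep{\cH}^{\Sscript{\cN}}$, with the explicit strict inverse constructed above, hence an isomorphism of categories.

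The step I expect to be the genuine, if modest, obstacle is precisely the well-definedness on arrows, where one must invoke the exact description of the equivalence relation defining $\cH_{\Sscript{1}}/\cN$ from Section~\ref{ssec:normal} and use triviality on $\cN$ at full strength. The remaining care is bookkeeping: one must keep the identifications $\cW_{\Sscript{u}}=\cW_{\Sscript{u'}}$ as honest equalities of vector spaces rather than mere isomorphisms, so that the inverse assignment is strict and the conclusion is an \emph{isomorphism} of categories, and not only an equivalence.
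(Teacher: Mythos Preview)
Your proof is correct and follows essentially the same route as the paper's: both construct the inverse of $\uppi_{\Sscript{*}}$ by descending an $\cN$-trivial $\cH$-representation to the quotient $\cH/\cN$. The paper packages this descent more abstractly, by viewing $\cW$ as a morphism of groupoids $\varrho_{\Sscript{\cW}}\colon \cH \to {\rm Iso}(\bara{\cW},\pi_{\Sscript{\cW}})$ with $\cN\subseteq\Ker{\varrho_{\Sscript{\cW}}}$ and then invoking the universal property of Proposition~\ref{prop:quotient} to factor it through $\cG$, whereas you carry out the same factorization by hand and are more explicit about well-definedness, the morphism-level argument, and the strictness of the inverse.
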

\begin{proof}We know that $\uppi_{*}\colon \Rep{\cG} \!\to\! \Rep{\cH}$ has the image in the full subcategory $\Rep{\cH}^{\Sscript{\cN}}\!$. Let us denote also by $\uppi_{*}\colon \Rep{\cG} \to \Rep{\cH}^{\Sscript{\cN}}$ the resulting functor. The inverse of this functor is construct with the help of Proposition~\ref{prop:quotient}. Explicitly, given a representation $(\cV, \varrho_{\Sscript{\cV}})$ in $\Rep{\cH}^{\Sscript{\cN}}$, we have by equation~\eqref{Eq:rhoV} a morphism of groupoids $\varrho_{\Sscript{\cV}}\colon \cH \to {\rm Iso}(\bara{\cV}, \uppi_{\Sscript{\cV}})$ with $\cN \subset \Ker{\varrho_{\Sscript{\cV}}}$. Then by Proposition~\ref{prop:quotient}, we have a~representation $\bara{\varrho_{\Sscript{\cV}}}\colon \cG=\cH/\cN \to {\rm Iso}(\bara{\cV}, \pi_{\Sscript{\cV}})$. This establishes a functor $\uppi^{*}\colon \Rep{\cH}^{\Sscript{\cN}} \to
\Rep{\cG}$ which turns out to be the inverse of~$\uppi_{*}$.
\end{proof}

\begin{Remark}\label{rem:WN} Let $\cN$ be a normal subgroupoid of $\cH$. Then for any representation $\cW \in \Rep{\cH}$, we can consider the assignment
\begin{gather*}
\cW^{\Sscript{\cN}}\colon \ \Ho \longrightarrow \Vect, \qquad \big( u \longmapsto \cW^{\Sscript{\cN^u}} \big),
\end{gather*}
where the subspace $\cW^{\Sscript{\cN^u}}$ of $\Wu$ consists of those vectors which are invariant under the action of loops in $\cN$, that is, those $v \in \Wu$ such that $\We(v)=e v=v$, for every $e \in \cN^{\Sscript{u}}$. It turns out that $\cW^{\Sscript{\cN}}$ gives a well defined functor which acts by restriction on arrows, because $\cN$ is normal. Indeed, take a~vector $w \in \cW^{\Sscript{\cN^{s(h)}}}$ for some $h \in \Ha$ and $e \in \cN^{\Sscript{t(h)}}$. Then, we get that $e (h w)= h \big(\big(h^{-1}eh\big)w \big)=h w$. Therefore, $\cW^{\Sscript{\cN}}$ is a sub-representation of the $\cH$-representation~$\cW$.
\end{Remark}

\subsection{The induction functor}\label{ssec:Ind}
Let $\upphi: \cH \to \cG$ be a morphism of groupoids. Fix an object $x \in \Go$ and consider the functor
\begin{gather*}
\phix\colon \ \cH \longrightarrow \Sets, \qquad \big( u \longmapsto \cG(x,\upphi_{\Sscript{0}}(u)) \big),
\end{gather*}
which acts as follows: For any arrow $h\in \Ha$, we have $\phix(h)=\cG(x, \upphi_{\Sscript{1}}(h))$:
\begin{gather}\label{Eq:Tartu}
\phix(h) \colon \ \cG\big(x, \upphi_{\Sscript{0}}(s(h)) \big) \longrightarrow \cG\big(x, \upphi_{\Sscript{0}}(t(h)) \big), \qquad \big( p \longmapsto \upphi_{\Sscript{1}}(h) p\big).
\end{gather}

As a contravariant functor $\upphi^{-}\colon \cG \to \left[\cH, \Sets\right]$, it acts by
\begin{gather}\label{Eq:Oslo}
\upphi^{\Sscript{g}}_u=\cG(g,\phi_{\Sscript{0}}(u))\colon \ \phix(u)=\cG(x,\upphi_{\Sscript{0}}(u) ) \longrightarrow \phixp(u)=\cG(x',\upphi_{\Sscript{0}}(u)), \qquad \big( q \longmapsto q g \big),
\end{gather}
for every object $u \in \Ho$ and arrow $g\colon x' \to x$ in $\Ga$.

\looseness=-1 Given a representation $\cW$ in $\Rep{\cH}$, we can construct a family of $\Bbbk$-vector spaces \linebreak $ \big\{ {\rm Nat}\big(\phix, \tilde{\cW}\big) \big\}_{x \in \Go}$ by using the fibrewise $\Bbbk$-vector space structure of $\cW$, where $\tilde{\cW}\colon \cH \to \Sets$ is the image of $\cW$ by the forgetful functor. More precisely, for each $x \in \Go$, the set of natural transformations ${\rm Nat}\big(\phix,\tilde{\cW}\big)$ admits a canonical structure of $\Bbbk$-vector space given componentwise by
\begin{gather*}
(\alpha+ \beta)_{u} =\alpha_u + \beta_u,\ (\lambda \alpha)_{u}= \lambda \alpha_u \colon \ \phix_{\Sscript{u}} \longrightarrow \Wu,
\end{gather*}
for every $u \in \Ho$ and $\alpha, \beta \in {\rm Nat}\big(\phix, \tilde{\cW}\big)$. We have then construct a functor
\begin{gather}\label{Eq:Phistarup}
\begin{gathered}
\xymatrix@R=0pt{ \upphi^*(\cW) \colon \ \cG \ar@{->}^-{}[rr] & & \Vect, \\ x \ar@{|->}^-{}[rr] & & {\rm Nat}\big(\phix, \tilde{\cW}\big), \\ g \ar@{|->}[rr] & & {\rm Nat}\big(\upphi^{\Sscript{g}}, \tilde{\cW}\big). }
\end{gathered}
\end{gather}

\begin{Lemma}\label{lema:Induction}Let $\upphi\colon \cH \to \cG$ be a morphism of groupoids. Then the assignment $\upphi^{*}\colon \Rep{\cH} \to \Rep{\cG}$ given in equation~\eqref{Eq:Phistarup}, gives a well defined functor referred to as {\rm the induction functor}.
\end{Lemma}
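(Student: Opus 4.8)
The plan is to verify the two things that ``well defined functor'' requires: first, that for each fixed $\cW \in \Rep{\cH}$ the assignment in \eqref{Eq:Phistarup} is a genuine functor $\upphi^{*}(\cW)\colon \cG \to \Vect$, and second, that $\upphi^{*}$ sends a morphism $\tF\colon \cW \to \cW'$ of $\Rep{\cH}$ to a morphism of $\Rep{\cG}$ in a way compatible with identities and composition. Everything reduces to two elementary facts: the associativity of the composition in $\cG$ and the functoriality of $\upphi$ (so that $\upphi_{\Sscript{1}}(h'h)=\upphi_{\Sscript{1}}(h')\upphi_{\Sscript{1}}(h)$ and $\upphi_{\Sscript{1}}(\iota_{\Sscript{u}})=\iota_{\Sscript{\upphi_{0}(u)}}$).

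First I would record the building blocks coming from \eqref{Eq:Tartu} and \eqref{Eq:Oslo}. Each $\phix\colon \cH \to \Sets$ is a functor, since $\phix(h'h)(p)=\upphi_{\Sscript{1}}(h'h)p=\upphi_{\Sscript{1}}(h')\big(\upphi_{\Sscript{1}}(h)p\big)=\phix(h')\big(\phix(h)(p)\big)$ and $\phix(\iota_{\Sscript{u}})=\mathrm{id}$, both by functoriality of $\upphi$ and associativity. Moreover, for $g\colon x' \to x$ in $\Ga$ the family $\upphi^{\Sscript{g}}=\{q \mapsto qg\}$ is a natural transformation $\phix \to \phixp$: its naturality square against an arrow $h$ of $\cH$ reads $\upphi_{\Sscript{1}}(h)(qg)=\big(\upphi_{\Sscript{1}}(h)q\big)g$, which is again associativity. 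Hence $\upphi^{-}$ is a contravariant functor $\cG \to [\cH,\Sets]$; concretely, $\upphi^{\Sscript{\iota_{x}}}=\mathrm{id}_{\Sscript{\phix}}$, and for composable $g\colon x'\to x$, $g'\colon x''\to x'$ the identity $q(gg')=(qg)g'$ gives $\upphi^{\Sscript{gg'}}=\upphi^{\Sscript{g'}}\circ\upphi^{\Sscript{g}}$.

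Since $\cH$ is small, $\Nat{\phix}{\td{\cW}}$ is an honest set, and the fibrewise operations displayed just before \eqref{Eq:Phistarup} make it a $\Bbbk$-vector space. The map $\Nat{\upphi^{\Sscript{g}}}{\td{\cW}}$ is precomposition, $\alpha \mapsto \alpha \circ \upphi^{\Sscript{g}}$; because it acts componentwise by $\alpha_{\Sscript{u}} \mapsto \alpha_{\Sscript{u}} \circ \upphi^{\Sscript{g}}_{\Sscript{u}}$ and never touches the values in $\cW$, it is $\Bbbk$-linear, and it sends natural transformations to natural transformations (a composite of such). Using the two identities for $\upphi^{-}$ just recorded, I obtain $\upphi^{*}(\cW)(\iota_{\Sscript{x}})=\mathrm{id}$ and
\begin{gather*}
\upphi^{*}(\cW)(gg')(\beta)=\beta\circ\upphi^{\Sscript{gg'}}=\big(\beta\circ\upphi^{\Sscript{g'}}\big)\circ\upphi^{\Sscript{g}}=\upphi^{*}(\cW)(g)\big(\upphi^{*}(\cW)(g')(\beta)\big),
\end{gather*}
so $\upphi^{*}(\cW)$ is a covariant functor $\cG\to\Vect$; the variance comes out correctly precisely because $\upphi^{-}$ is contravariant. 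Each $\upphi^{*}(\cW)(g)$ is then automatically invertible, as $\cG$ is a groupoid.

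For functoriality in $\cW$, given $\tF\colon \cW\to\cW'$ I set $\upphi^{*}(\tF)_{\Sscript{x}}\colon \alpha\mapsto\td{\tF}\circ\alpha$ (postcomposition with the underlying natural transformation $\td{\tF}$), which is $\Bbbk$-linear fibrewise. Its naturality over $g\colon x'\to x$ amounts to $\big(\td{\tF}\circ\beta\big)\circ\upphi^{\Sscript{g}}=\td{\tF}\circ\big(\beta\circ\upphi^{\Sscript{g}}\big)$, once more associativity of composition, so $\upphi^{*}(\tF)$ is a morphism in $\Rep{\cG}$; and $\upphi^{*}(\mathrm{id}_{\Sscript{\cW}})=\mathrm{id}$, $\upphi^{*}(\tF'\circ\tF)=\upphi^{*}(\tF')\circ\upphi^{*}(\tF)$ because postcomposition is functorial. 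I expect no genuine obstacle here: the whole argument is bookkeeping, and the only two points deserving a moment's attention are tracking the variances (so that precomposing with the contravariant $\upphi^{-}$ yields a \emph{covariant} representation) and the smallness of $\cH$, which guarantees that the $\mathrm{Nat}$-spaces are sets and hence legitimate objects of $\Vect$.
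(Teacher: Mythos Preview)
Your proof is correct and follows essentially the same approach as the paper: both reduce the verification to the associativity of composition in $\cG$ (in the paper's proof, the key line is $\big({\rm Nat}(\upphi^{t(g)},\tilde{\tF})\circ{\rm Nat}(\upphi^{g},\tilde{\cW})\big)(\alpha)=\tilde{\tF}\circ\alpha\circ\upphi^{g}=\big({\rm Nat}(\upphi^{g},\tilde{\cW'})\circ{\rm Nat}(\upphi^{s(g)},\tilde{\tF})\big)(\alpha)$, which is exactly your ``$(\tilde{\tF}\circ\beta)\circ\upphi^{g}=\tilde{\tF}\circ(\beta\circ\upphi^{g})$''). The only difference is scope: the paper's proof checks just the naturality square for $\upphi^{*}(\tF)$, treating the functoriality of each $\upphi^{*}(\cW)$ and of $\upphi^{-}$ as already established in the discussion preceding the lemma, whereas you spell out those preliminary verifications as well.
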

\begin{proof}Given a morphism $\tF\colon \cW \to \cW'$ in the category $\Rep{\cH}$ and an arrow $g \in \Ga$, we need to check that the following diagram
\begin{gather*}
\xymatrix@C=45pt@R=35pt{ {\rm Nat}\big(\upphi^{\Sscript{s(g)}},
\tilde{\cW}\big) \ar@{->}^-{ \upphi^*(\tF)_{\Sscript{s(g)}} = {\rm
Nat}(\upphi^{\Sscript{s(g)}},\tilde{\tF})}[rr] \ar@{->}|-{{\rm
Nat}\big(\upphi^{\Sscript{g}}, \tilde{\cW}\big)}[d] & & {\rm
Nat}(\upphi^{\Sscript{s(g)}}, \tilde{\cW'}) \ar@{->}|-{{\rm
Nat}(\upphi^{\Sscript{g}}, \tilde{\cW'})}[d] \\ {\rm
Nat}\big(\upphi^{\Sscript{t(g)}}, \tilde{\cW}\big) \ar@{->}^-{
\upphi^*(\tF)_{\Sscript{t(g)}} = {\rm
Nat}(\upphi^{\Sscript{t(g)}},\tilde{\tF})}[rr] & & {\rm
Nat}\big(\upphi^{\Sscript{t(g)}}, \tilde{\cW'}\big) }
\end{gather*}
of vector spaces, commutes. This follows from the equality
\begin{gather*}
{\rm Nat}\big(\upphi^{\Sscript{t(g)}},\tilde{\tF}\big) \circ {\rm Nat}\big(\upphi^{\Sscript{g}}, \tilde{\cW}\big) (\alpha)
 = \tilde{\tF} \circ \alpha \circ \upphi^{\Sscript{g}} = {\rm Nat}\big(\upphi^{\Sscript{g}}, \tilde{\cW'}\big) \circ {\rm Nat}\big(\upphi^{\Sscript{s(g)}},\tilde{\tF}\big) (\alpha),
\end{gather*}
for every $\alpha \in {\rm Nat}\big(\upphi^{\Sscript{s(g)}},\tilde{\cW}\big)$.
\end{proof}

\begin{Example}\label{exam:GM}Let $\cH:=(G\times X, X)$ be an action groupoid as in Example~\ref{exam:action} and consider the morphism $\upphi:={\rm pr}_{\Sscript{1}}\colon \cH \to G$ of groupoids, where~$G$ is considered as a groupoid with one object. Let $\cW \in \Rep{\cH}$ and set $\Gamma(\bara{\cW})$ the $\Bbbk$-vector space of global sections of the vector bundle $\bara{\cW}$ attached to the representation $\cW$. Then $\Gamma(\bara{\cW}) \in \Rep{G}$ and the assignment $\cW \to \Gamma(\bara{\cW})$ establishes a functor from $\Rep{\cH}$ to $\Rep{G}$. Furthermore, we have a natural isomorphism $\upphi^{\Sscript{*}}(\cW) \cong \Gamma(\bara{\cW})$, for every $\cH$-representation~$\cW$.
\end{Example}

\begin{Remark}\label{rem:HomSet} Let $\upphi\colon \cH \to \cG$ be a morphism of groupoids. Denote by ${}^{\Sscript{\upphi}}\uU(\cG):=\Ho \due \times {\Sscript{\upphi_0}}{ \Sscript{t}} \Ga$ the $(\cH,\cG)$-biset of Example \ref{exam:bisets}. Given an $\cH$-representation $\cW$ and consider it associated vector bundle $(\bara{\cW},\pi_{\Sscript{\cW}})$ as a left $\cH$-set by using the action of equation \eqref{Eq:actionV}. Then we have a natural isomorphism
\begin{gather*}
\xymatrix@R=0pt@C=20pt{ \hom{\cH\text{-}{\rm Sets}}{{
}^{\Sscript{\upphi}}\uU(\cG)}{\bara{\cW}} \ar@{->}^-{}[rr] & &
\underset{x \in \Go}{\prod} \upphi^{*}(\cW)_{\Sscript{x}},
\\ {\sf{f}} \ar@{|->}^-{}[rr] & & \big(
{\sf{f}}^{\Sscript{x}}\big)_{\Sscript{x \in \Go}}, \quad \text{where
} \big( {\sf{f}}^{\Sscript{x}}_{\Sscript{u}}\colon
\upphi^{\Sscript{x}}_{\Sscript{u}} \to \Wu, \big(a \mapsto
{\sf{f}}(u,a) \big) \big)_{\Sscript{u \in \Ho}}, \\
\big[ (u,a) \mapsto {\sf{p}}^{\Sscript{s(a)}}_{\Sscript{u}}(a)
\big]& & \big( {\sf{p}}^{\Sscript{x}}\big)_{\Sscript{x \in
\Go}}. \ar@{|->}^-{}[ll] }
\end{gather*}
This in fact comes from the natural isomorphism
\begin{gather*}
\xymatrix@R=0pt@C=20pt{ \hom{\cH\text{-}{\rm Sets}} {\vartheta(\{ x
\})^{-1}}{\bara{\cW}} \ar@{->}^-{}[r] & {\rm
Nat}\big(\upphi^{\Sscript{x}}, \tilde{\cW}\big) =
\upphi^{*}(\cW)_{\Sscript{x}},
\\ F \ar@{|->}^-{}[r] & \big[\upphi^{\Sscript{x}}_{\Sscript{u}} \to
W_{\Sscript{u}},\; \big( a \mapsto F(u, a)\big)\big], \\
\big[ (u,a) \mapsto \upeta_{\Sscript{u}}(a)\big] & \upeta,
\ar@{|->}^-{}[l] }
\end{gather*}
where $\vartheta\colon { }^{\Sscript{\upphi}}\uU(\cG) \to \Go$ is as before, the map $(u,a) \mapsto s(a)$.
\end{Remark}

\begin{Remark}[projection formula]\label{rem:PFromula}Analogue to the group case, see for instance~\cite[Proposition~2.10.18]{Kowalski:2014}, one can show that there is a natural isomorphism
\begin{gather*}
\upphi^{\Sscript{*}}\big( \cW \tensor{}\upphi_{\Sscript{*}}(\cV)\big) \cong \upphi^{\Sscript{*}}(\cW)\tensor{} \cV,
\end{gather*}
for any pair of representations $\cV \in \Rep{\cG}$ and $\cW \in \Rep{\cH}$. At the level of objects this isomorphism is given by
\begin{gather*}
\xymatrix@R=0pt{ {\rm Nat}\big(\upphi^{\Sscript{x}}, \tilde{\cW}\big) \tensor{\Bbbk} \Vx \ar@{->}^-{}[rr] & & {\rm Nat}\big(\upphi^{\Sscript{x}}, \widetilde{\cW
\tensor{}\upphi_{\Sscript{*}}(\cV)}\big), \\ \upeta\tensor{} {\bf{v}} \ar@{|->}^-{}[rr] & & \big[ \upphi^{\Sscript{x}}(u) \to \cW_{\Sscript{u}}\tensor{}\cV_{\Sscript{\phi(u)}}, \big( b \mapsto \upeta_{\Sscript{u}}(b)\tensor{}\cV^{\Sscript{b}}({\bf{v}})\big) \big]_{\Sscript{u \in \Ho}}, }
\end{gather*}
whose inverse is computed by fixing a dual basis $\{{\bf{v}}_j\}_{j} \subset \Vx$ and employing the dual basis $\{\cV^{\Sscript{b}}({\bf{v}}_j)\}_{j} \subset \cV_{\Sscript{\phi(u)}}$, for any $b \in \upphi^{\Sscript{x}}(u)$ and $u \in \Ho$. The rest of verifications are left to the reader.

A more conceptual proof can be given using Mitchell's Theorem \cite[Theorem 4.5.2]{Mitchell:1965} employing the set of small projective generators that enjoy both categories $\Rep{\cH}$ and $\Rep{\cG}$.
\end{Remark}

\subsection{The co-induction functor}\label{ssec:coInd}
Consider as before $\upphi\colon \cH \to \cG$ a morphism of groupoids. Denote by $\upg:=\Ga \due \times {\Sscript{s}}{ \Sscript{\upphi_0} } \Ho$ the underlying set of the $(\cG,\cH)$-biset described in Example~\ref{exam:bisets} with the two structure maps $ \varsigma\colon \upg \to \Go$, $(a,u) \mapsto t(a)$ and ${\rm pr}_{\Sscript{2}}\colon \upg \to \Ho$, $(a,u) \mapsto u$.

For a given $x \in\Go$, we consider the fibre $\varsigma^{-1}(\{x\})=\big\{ (a,u) \in \upg| t(a)=x \big\}$ as a~left $\cH$-invariant subset of $\upg$. We then consider its associated right translation groupoid $\varsigma^{-1}(\{x\}) \rJoin \cH$, which we denote by $\cH^{\Sscript{\upphi, x}}$, together with the canonical morphism of groupoids
\begin{gather*}
\vV^{\Sscript{\upphi, x}}\colon \ \cH^{\Sscript{\upphi, x}}= \varsigma^{-1}(\{x\}) \rJoin \cH \longrightarrow \cH, \qquad \big[ \big( ((a',u'),h), (a,u)\big) \longmapsto (h,u) \big],
\end{gather*}
that is, $\vV^{\Sscript{\upphi, x}}_{\Sscript{0}}(a,u)=u$ and $\vV^{\Sscript{\upphi, x}}_{\Sscript{1}}\big( ((a',u'),h) \big)=h$. As in Section~\ref{ssec:Res}, we have the attached restriction
functor $\vV^{\Sscript{\upphi, x}}_*\colon \Rep{\cH} \to \Rep{\cH^{\Sscript\upphi, x}}$, for any $x \in \Go$.
In this way, for any $x \in \Go$, and $\cW \in \Rep{\cH}$, we set\begin{gather}\label{Eq:SPx} {}^*\upphi(\cW)_{\Sscript{x}}: = \injlimit{\cH^{\Sscript{\upphi,
x}}}{\vV^{\Sscript{\upphi, x}}_*(\cW)}
\end{gather}
and denote by $\big\{\upupsilon^{\Sscript{x}}_{\Sscript{(a, u)}}\colon \vV^{\Sscript{\upphi, x}}_*(\cW)_{\Sscript{(a, u)}}=\cW_{\Sscript{u}} \longrightarrow {}^*\upphi(\cW)_{\Sscript{x}} \big\}_{\Sscript{(a, u) \in \varsigma^{-1}(\{x\})}}$ the structural $\Bbbk$-linear maps of this limit.

Now, given an arrow $g \in \Ga$, we have a diagram of morphism of groupoids
\begin{gather*}
\xymatrix{ \varsigma^{-1}(\{s(g)\}) \rJoin \cH \ar@{->}^-{}[d]
\ar@{->}@/^1pc/^-{\vV^{\Sscript{\upphi, s(g)}}}[drr] & & \\
\varsigma^{-1}(\{t(g)\}) \rJoin \cH \ar@{->}_-{\vV^{\Sscript{\upphi, t(g)}}}[rr] & & \cH, }
\end{gather*}
whose vertical arrow is the isomorphism sending $((a,u), h) \mapsto ( (ga,u),h)$. Therefore, by the definition of the limit of equation~\eqref{Eq:SPx}, there is a unique $\Bbbk$-linear map
${}^*\upphi(\cW)^{\Sscript{g}}\colon {}^*\upphi(\cW)_{\Sscript{s(g)}} \to {}^*\upphi(\cW)_{\Sscript{t(g)}}$ rendering commutative the following diagrams
\begin{gather}\label{Eq:lphiup}
\begin{gathered}
\xymatrix@R=25pt{ \pls(\cW)_{\Sscript{s(g)}}
\ar@{-->}^-{\pls(\cW)^g}[rr] & & \pls(\cW)_{\Sscript{t(g)}} \\
\cW_{\Sscript{u}}=\vV^{\Sscript{\upphi,
s(g)}}_*(\cW)_{\Sscript{(a, u)}}
\ar@{->}^-{\upupsilon^{\Sscript{s(g)}}_{\Sscript{(a, u)}}}[u]
\ar@{->}^-{\zeta^g_{(a, u)}}[rr] & &
\ar@{->}_-{\upupsilon^{\Sscript{t(g)}}_{\Sscript{(ga, u)}}}[u]
\vV^{\Sscript{\upphi, t(g)}}_*(\cW)_{\Sscript{(ga, u)}}
=\cW_{\Sscript{u}} }
\end{gathered}
\end{gather}
for every $(a,u) \in \varsigma^{-1}(\{s(g)\})$, where $\zeta^g_{(a, u)}$ acts by identity, that is,
\begin{gather*}
\zeta^g_{(a, u)}\colon \ \vV^{\Sscript{\upphi, s(g)}}_*(\cW)_{\Sscript{(a, u)}}=\cW_{\Sscript{u}}\longrightarrow \vV^{\Sscript{\upphi,t(g)}}_*(\cW)_{\Sscript{(ga, u)}}=\cW_{\Sscript{u}}, \qquad \big(w \longmapsto w \big).
\end{gather*}

Equations \eqref{Eq:SPx} and \eqref{Eq:lphiup} lead then to a functor
\begin{gather}\label{Eq:starphiup}
\begin{gathered}
\xymatrix@R=0pt{ \pls(\cW) \colon \ \cG \ar@{->}^-{}[rr] & & \Vect, \\ x \ar@{|->}^-{}[rr] & & \pls(\cW)_{\Sscript{x}}, \\ g \ar@{|->}[rr] & & \pls(\cW)^g. }
\end{gathered}
\end{gather}
On the other hand, if $\tF\colon \cW \to \cW'$ is a morphism of $\cH$-representations, then, for every $x \in \Go$, we define the following $\Bbbk$-linear map
\begin{gather}\label{Eq:pf}
{}^*\upphi(\tF)_{\Sscript{x}}\colon \ {}^*\upphi(\cW)_{\Sscript{x}} \longrightarrow {}^*\upphi(\cW')_{\Sscript{x}}
\end{gather}
as the unique $\Bbbk$-linear map which renders commutative the following diagram of $\Bbbk$-vector spaces
\begin{gather*}
\xymatrix@R=25pt{ \pls(\cW)_{\Sscript{x}} \ar@{-->}^-{\pls(\tF)_{\Sscript{x}}}[rr] & & \pls(\cW')_{\Sscript{x}} \\ \vV^{\Sscript{\upphi, x}}_*(\cW)_{\Sscript{(a, u)}}
\ar@{->}^-{\upupsilon^{\Sscript{\cW}}_{\Sscript{(a, u)}}}[u] \ar@{->}^-{\xi^{\Sscript{\tF}}_{\Sscript{(a, u)}}}[rr] & & \ar@{->}_-{\upupsilon^{\Sscript{\cW'}}_{\Sscript{(a, u)}}}[u]
\vV^{\Sscript{\upphi, x}}_*(\cW')_{\Sscript{(a, u)}}, }
\end{gather*}
where $\xi^{\Sscript{\tF}}_{\Sscript{(a, u)}}\colon \vV^{\Sscript{\upphi, x}}_*(\cW)_{\Sscript{(a, u)}} \to \vV^{\Sscript{\upphi, x}}_*(\cW')_{\Sscript{(a, u)}} $ is the $\Bbbk$-linear map sending $w \mapsto \tF_{\Sscript{u}}(w)$.

\begin{Lemma}\label{lema:Co-Induction}Let $\upphi\colon \cH \to \cG$ be a morphism of groupoids. Then the assignment ${}^{*}\upphi\colon \Rep{\cH} \to \Rep{\cG}$ described in equation~\eqref{Eq:starphiup}, gives rise to a well defined functor, referred to as {\rm the co-induction functor} of $\upphi$.
\end{Lemma}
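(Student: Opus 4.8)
\textit{Proof plan.} The plan is to reduce every assertion to the universal property of the inductive limits defining $\pls(\cW)_{\Sscript{x}}$ in \eqref{Eq:SPx}, since both the action on arrows and the action on morphisms are, by construction, the unique $\Bbbk$-linear maps out of a colimit that make the diagrams \eqref{Eq:lphiup} and \eqref{Eq:pf} commute. First I would fix the cocone $\big\{\upupsilon^{\Sscript{x}}_{\Sscript{(a,u)}}\colon \cW_{\Sscript{u}}\to\pls(\cW)_{\Sscript{x}}\big\}$ and recall that, by definition of the limit over the translation groupoid $\cH^{\Sscript{\upphi,x}}=\varsigma^{-1}(\{x\})\rJoin\cH$, these structural maps are compatible with the arrows of $\cH^{\Sscript{\upphi,x}}$. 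The organizing device is the isomorphism of groupoids $\Theta_{\Sscript{g}}\colon \cH^{\Sscript{\upphi,s(g)}}\to\cH^{\Sscript{\upphi,t(g)}}$, $((a,u),h)\mapsto((ga,u),h)$, attached to an arrow $g\in\Ga$, together with the fact that each transition map $\zeta^{\Sscript{g}}_{\Sscript{(a,u)}}\colon\cW_{\Sscript{u}}\to\cW_{\Sscript{u}}$ is the identity. Indeed $\Theta_{\Sscript{g}}$ strictly intertwines the restricted representations, $\vV^{\Sscript{\upphi,t(g)}}_*(\cW)\circ\Theta_{\Sscript{g}}=\vV^{\Sscript{\upphi,s(g)}}_*(\cW)$, so that $\big\{\upupsilon^{\Sscript{t(g)}}_{\Sscript{(ga,u)}}\big\}_{\Sscript{(a,u)}}$ is again a cocone on the diagram $\vV^{\Sscript{\upphi,s(g)}}_*(\cW)$ whose colimit is $\pls(\cW)_{\Sscript{s(g)}}$; this is precisely what guarantees the existence and uniqueness of $\pls(\cW)^{\Sscript{g}}$.

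Next I would establish functoriality of \eqref{Eq:starphiup} in the $\cG$-variable. Since $\Theta_{\Sscript{\iota_x}}=\id$ and every $\zeta^{\Sscript{\iota_x}}_{\Sscript{(a,u)}}$ is the identity, the defining property of $\pls(\cW)^{\Sscript{\iota_x}}$ forces it to equal $\id_{\Sscript{\pls(\cW)_{\Sscript{x}}}}$. For composable $g,g'$ with $s(g)=t(g')$ I would use the key identity $\Theta_{\Sscript{g}}\circ\Theta_{\Sscript{g'}}=\Theta_{\Sscript{gg'}}$, which holds fibrewise by associativity of the left $\cG$-action, $g(g'a)=(gg')a$; precomposing $\pls(\cW)^{\Sscript{g}}\circ\pls(\cW)^{\Sscript{g'}}$ with the structural maps $\upupsilon^{\Sscript{s(g')}}_{\Sscript{(a,u)}}$ and comparing with the cocone attached to $gg'$ yields, by uniqueness of the induced map, that $\pls(\cW)^{\Sscript{gg'}}=\pls(\cW)^{\Sscript{g}}\circ\pls(\cW)^{\Sscript{g'}}$. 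Hence $\pls(\cW)$ is a genuine functor $\cG\to\Vect$, that is, an object of $\Rep{\cG}$.

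It then remains to treat morphisms. For $\tF\colon\cW\to\cW'$ in $\Rep{\cH}$, the components $\xi^{\Sscript{\tF}}_{\Sscript{(a,u)}}=\tF_{\Sscript{u}}$ assemble, by naturality of $\tF$ and functoriality of the restriction $\vV^{\Sscript{\upphi,x}}_*$, into a morphism of diagrams $\vV^{\Sscript{\upphi,x}}_*(\cW)\to\vV^{\Sscript{\upphi,x}}_*(\cW')$, and hence induce the unique map $\pls(\tF)_{\Sscript{x}}$ on colimits as in \eqref{Eq:pf}. To see that $\pls(\tF)=\big\{\pls(\tF)_{\Sscript{x}}\big\}_{\Sscript{x\in\Go}}$ is a morphism in $\Rep{\cG}$, I would verify, for each $g\in\Ga$, the naturality square relating $\pls(\cW)^{\Sscript{g}}$ and $\pls(\cW')^{\Sscript{g}}$ by precomposing with each $\upupsilon^{\Sscript{s(g)}}_{\Sscript{(a,u)}}$: using that $\zeta^{\Sscript{g}}$ acts by the identity and $\xi^{\Sscript{\tF}}$ by $\tF_{\Sscript{u}}$, both composites collapse to the same structural map of $\pls(\cW')_{\Sscript{t(g)}}$ applied to $\tF_{\Sscript{u}}$, whence the square commutes by uniqueness. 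Finally, functoriality of $\pls$ in $\cW$, namely $\pls(\id_{\Sscript{\cW}})=\id$ and $\pls(\tF'\circ\tF)=\pls(\tF')\circ\pls(\tF)$, drops out of $\xi^{\Sscript{\id}}_{\Sscript{(a,u)}}=\id$ and $\xi^{\Sscript{\tF'\circ\tF}}_{\Sscript{(a,u)}}=\tF'_{\Sscript{u}}\circ\tF_{\Sscript{u}}=\xi^{\Sscript{\tF'}}_{\Sscript{(a,u)}}\circ\xi^{\Sscript{\tF}}_{\Sscript{(a,u)}}$, again by the uniqueness clause.

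The whole argument carries no conceptual difficulty; the only point that requires care, and which I regard as the main obstacle, is the bookkeeping in the $\cG$-variable: one must check that the isomorphisms $\Theta_{\Sscript{g}}$ of translation groupoids compose strictly and correctly identify the restricted representations $\vV^{\Sscript{\upphi,x}}_*(\cW)$, so that the relevant cocones match on the nose and the uniqueness of the colimit-induced maps can be invoked at each step. Once this compatibility is set up cleanly, identities, composition, naturality and functoriality in $\cW$ all follow from the same uniqueness principle.
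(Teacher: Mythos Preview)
Your proposal is correct and follows essentially the same approach as the paper: both arguments reduce every verification to the universal property of the colimits defining $\pls(\cW)_{\Sscript{x}}$, and in particular the naturality square for $\pls(\tF)$ with respect to an arrow $g\in\Ga$ is checked by precomposing with the structural maps $\upupsilon^{\Sscript{s(g)}}_{\Sscript{(a,u)}}$ and observing that, since $\zeta^{\Sscript{g}}_{\Sscript{(a,u)}}$ acts by the identity and $\xi^{\Sscript{\tF}}_{\Sscript{(a,u)}}=\tF_{\Sscript{u}}$, the two composites agree on the source diagram. The paper's proof records only this naturality square explicitly, leaving the remaining functoriality checks (identities, composition in $\cG$, and functoriality in $\cW$) implicit, whereas you spell them all out; but the method is the same throughout.
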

\begin{proof}Given a morphism $\tF\colon \cW \to \cW'$ in the category $\Rep{\cH}$ and an arrow $g \in \Ga$, we need to check that the following diagram
\begin{gather*}
\xymatrix@C=45pt@R=35pt{ \pls(\cW)_{\Sscript{s(g)}} \ar@{->}^-{
\pls(\tF)_{\Sscript{s(g)}}}[rr]
\ar@{->}|-{\pls(\cW)^{\Sscript{g}}}[d] & &
\pls(\cW')_{\Sscript{s(g)}} \ar@{->}|-{\pls(\cW')^{\Sscript{g}}}[d]
\\ \pls(\cW)_{\Sscript{t(g)}} \ar@{->}^-{
\pls(\tF)_{\Sscript{t(g)}} }[rr] & & \pls(\cW')_{\Sscript{t(g)}} }
\end{gather*}
of vector spaces, commutes. This is equivalent to show the commutativity of the following diag\-rams:
\begin{gather*}
\xymatrix@C=45pt@R=25pt{ \vV^{\Sscript{\upphi,
s(g)}}_*(\cW)_{\Sscript{(a, u)}} \ar@{->}^-{
\xi^{\Sscript{\tF}}_{\Sscript{(a,u)}}}[rr]
\ar@{->}_-{\zeta_{\Sscript{(a,u)}}^{\Sscript{\cW, g}}}[d] & &
\vV^{\Sscript{\upphi, s(g)}}_*(\cW')_{\Sscript{(a, u)}}
\ar@{->}^-{\zeta_{\Sscript{(a,u)}}^{\Sscript{\cW', g}}}[d] \\
\vV^{\Sscript{\upphi, t(g)}}_*(\cW)_{\Sscript{(ga, u)}}
\ar@{->}^-{ \xi^{\Sscript{\tF}}_{\Sscript{(ga,u)}} }[rr] & &
\vV^{\Sscript{\upphi, t(g)}}_*(\cW')_{\Sscript{(ga, u)}} }
\end{gather*}
for any $(a, u) \in \varsigma^{-1}(\{s(g)\})$. However, this is immediate from the definitions of the involved maps.
\end{proof}

\begin{Remark}Given $\cW$ an $\cH$-representation, then one can consider, as in Section~\ref{ssec:biset}, the tensor product $\upg \tensor{\cH} \bara{\cW}$, where as above $(\bara{\cW},\pi_{\Sscript{\cW}})$ is the underlying vector bundle of $\cW$ endowed within its canonical left $\cH$-action, as in equation~\eqref{Eq:actionV}. An equivalence class of an element $((a,u),w) \in \upg \due \times {\Sscript{{\rm pr}_2}}{\; \Sscript{\pi}} \bara{\cW}$ will be denoted by $(a,u)\tensor{\cH}w$. Now, if we consider the vector bundle $\big(\bara{{}^*\upphi(\cW)}, \pi \big)$ endowed with its canonical left $\cG$-action, then we obtain the following $\cG$-equivariant map
\begin{gather*}
\xymatrix@R=0pt{ \upg \tensor{\cH} \bara{\cW} \ar@{->}^-{}[rr] & & \bara{{}^*\upphi(\cW)}, \\ (a,u)\tensor{\cH}w \ar@{|->}^-{}[rr] & &
\upupsilon^{\Sscript{t(a)}}_{\Sscript{(a,u)}} (w), }
\end{gather*}
which is not in general an isomorphism.
\end{Remark}

\section{Frobenius reciprocity formulae}\label{sec:FRF}
In this section we prove the left and the right Frobenius reciprocity formulae. This mainly establishes, from one side an adjunction between the restriction and induction functors, and from another one an adjunction between the restriction and co-induction functors. Form a~categorical point of view, this amounts to the notions of left and right Kan extensions. Here we follow an elementary and direct exposition, taking advantage of groupoids structure, without appealing to any heavy categorical notions.

\subsection{Right Frobenius reciprocity formula}\label{ssec:Adj}
In this subsection we show that the induction functor is a right adjoint functor to the restriction functor, that is, the right Frobenius reciprocity formula. So, let $\upphi\colon \cH \to \cG$ be a morphism of groupoids and consider two representations $\cV \in \Rep{\cG}$ and $\cW \in \Rep{\cH}$. Take a morphism $ \sigma \in \hom{\cH}{\upphi_{*}(\cV)}{\cW}$, and define, for every $x \in \Go$, the linear map
\begin{gather}
\Psi(\sigma)_{\Sscript{x}}\colon \ \Vx \longrightarrow \upphi^*(\cW)_{\Sscript{x}}={\rm Nat}\big(\phix, \tilde{\cW}\big),\nonumber\\
\hphantom{\Psi(\sigma)_{\Sscript{x}}\colon}{} \ \left( v \longmapsto \left[ \xymatrix@R=0pt{ \upphi^{\Sscript{x}}_{\Sscript{u}} =\cG(x,\phio(u)) \ar@{->}^-{}[r] & \cW_{\Sscript{u}} \\ p \ar@{|->}^-{}[r] & \sigma_{\Sscript{u}} (p v) } \right]_{u \in \Ho} \right),\label{Eq:Psigma}
\end{gather}
where $pv=\cV^{\Sscript{p}}(v) \in \cV_{\Sscript{\phio(u)}}$ is the left $\cG$-action on $\bara{\cV}$ defined in equation~\eqref{Eq:actionV}.

\begin{Lemma}\label{lema:Psi}The family of linear map $\{\Psi(\sigma)_{\Sscript{x}}\}_{x \in \Go}$ stated in equation~\eqref{Eq:Psigma} defines a natural transformation. That is, $\Psi(\sigma) \in \hom{\cG}{\cV}{\upphi^*(\cW)}$.
\end{Lemma}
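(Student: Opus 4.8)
The plan is to verify two separate things: first, that for each $x \in \Go$ and each $v \in \Vx$ the prescription $p \mapsto \sigma_{\Sscript{u}}(pv)$ really assembles into a natural transformation $\phix \Rightarrow \td{\cW}$, so that $\Psi(\sigma)_{\Sscript{x}}$ genuinely takes values in $\upphi^{*}(\cW)_{\Sscript{x}} = {\rm Nat}(\phix, \td{\cW})$ and is moreover $\Bbbk$-linear; and second, that the family $\{\Psi(\sigma)_{\Sscript{x}}\}_{x \in \Go}$ is natural in $x$, i.e., is a morphism in $\Rep{\cG}$. Both verifications rest on exactly two ingredients, namely the functoriality of $\cV$ applied to the left $\cG$-action $pv = \cV^{\Sscript{p}}(v)$ of equation~\eqref{Eq:actionV}, together with the naturality of the given $\sigma$.

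First I would fix $x$ and $v$ and take an arrow $h \in \Ha$, and check that the square formed by $\phix(h)\colon p \mapsto \phia(h)p$ (equation~\eqref{Eq:Tartu}) and by $\Wh$ commutes after applying the two components $p \mapsto \sigma_{\Sscript{u}}(pv)$. Chasing $p \in \cG(x, \phio(s(h)))$ downwards then rightwards yields $\sigma_{\Sscript{t(h)}}\big((\phia(h)p)v\big)$; since $\upphi$ is a functor, $\phia(h)p$ is a legitimate composite and functoriality of $\cV$ gives $(\phia(h)p)v = \cV^{\Sscript{\phia(h)}}(pv)$. The naturality of $\sigma$ as a morphism $\upphi_{*}(\cV) \to \cW$, whose action on $h$ is precisely $\cV^{\Sscript{\phia(h)}}$, reads $\Wh \circ \sigma_{\Sscript{s(h)}} = \sigma_{\Sscript{t(h)}} \circ \cV^{\Sscript{\phia(h)}}$, and its right-hand side is exactly the rightwards-then-downwards composite $\Wh\big(\sigma_{\Sscript{s(h)}}(pv)\big)$. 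This closes the square. Linearity of $\Psi(\sigma)_{\Sscript{x}}$ in $v$ is immediate, because $v \mapsto pv = \cV^{\Sscript{p}}(v)$ and each $\sigma_{\Sscript{u}}$ are $\Bbbk$-linear, while the vector space structure on ${\rm Nat}(\phix, \td{\cW})$ is computed componentwise.

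Next I would establish naturality in $x$. Given $g \in \Ga$, the goal is to show $\upphi^{*}(\cW)^{\Sscript{g}} \circ \Psi(\sigma)_{\Sscript{s(g)}} = \Psi(\sigma)_{\Sscript{t(g)}} \circ \Vg$ as maps $\cV_{\Sscript{s(g)}} \to \upphi^{*}(\cW)_{\Sscript{t(g)}}$. Recalling from equation~\eqref{Eq:Oslo} that $\upphi^{*}(\cW)^{\Sscript{g}} = {\rm Nat}(\upphi^{\Sscript{g}}, \td{\cW})$ acts by precomposition with $\upphi^{\Sscript{g}}\colon q \mapsto qg$, the $u$-component of the left-hand side sends $q \in \cG(t(g), \phio(u))$ to $\sigma_{\Sscript{u}}\big((qg)v\big)$, whereas the $u$-component of the right-hand side sends $q$ to $\sigma_{\Sscript{u}}\big(q\,(\Vg(v))\big)$. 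These coincide because functoriality of $\cV$ yields $(qg)v = \cV^{\Sscript{q}}(\cV^{\Sscript{g}}(v)) = q\,(\Vg(v))$.

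The content here is genuinely routine; the only place demanding care is the bookkeeping of sources and targets and, in particular, the contravariance of the assignment $\upphi^{-}$ in equation~\eqref{Eq:Oslo}, which reverses $g$ and inserts it on the right of $q$. Once the two functoriality identities $(\phia(h)p)v = \cV^{\Sscript{\phia(h)}}(pv)$ and $(qg)v = q\,(\Vg(v))$ are written down and matched against the naturality of $\sigma$, both diagrams close at once, which completes the argument.
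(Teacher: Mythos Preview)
Your argument is correct and follows essentially the same route as the paper's own proof: first verifying that $\Psi(\sigma)_{\Sscript{x}}(v)$ is a genuine natural transformation $\phix \Rightarrow \td{\cW}$ by checking the square for each $h \in \Ha$ via naturality of $\sigma$, and then establishing naturality in $x$ by unwinding ${\rm Nat}(\upphi^{\Sscript{g}}, \td{\cW})$ and using functoriality of $\cV$. The only (harmless) addition you make is the explicit mention of $\Bbbk$-linearity, which the paper leaves implicit.
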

\begin{proof}First let us check that each of the $\Psi(\sigma)_{\Sscript{x}}$'s is well defined. So given an arbitrary arrow $h \in \Ha$, we need to show that the diagram
\begin{gather}\label{Eq:digPhi}
\begin{gathered}
\xymatrix@C=40pt{ \phix_{\Sscript{s(h)}} \ar@{->}^-{\Psi(\sigma)_{\Sscript{x}}(v)_{\Sscript{s(h)}}}[rr] \ar@{->}_-{\phix_{\Sscript{h}}}[d] & & \ar@{->}^-{\cW^{\Sscript{h}}}[d] \cW_{\Sscript{s(h)}} \\ \phix_{\Sscript{t(h)}} \ar@{->}^-{\Psi(\sigma)_{\Sscript{x}}(v)_{\Sscript{t(h)}}}[rr] & & \cW_{\Sscript{t(h)}} }
\end{gathered}
\end{gather}
is commutative. So take an arrow $p \in \phix(s(h))= \cG(x,\phio(s(h))$, then
\begin{gather*}
h \Psi(\sigma)_{\Sscript{x}}(v)_{\Sscript{s(h)}}(p) = h \sigma_{\Sscript{s(h)}} (p v) =\sigma_{\Sscript{t(h)}} (\phia(h) p v),
\end{gather*}
because $\sigma$ is $\cH$-equivariant. On the other hand, we have that
\begin{gather*}
\Psi(\sigma)_{\Sscript{x}}(v)_{\Sscript{t(h)}} \circ \phix_{\Sscript{h}}(p) = \Psi(\sigma)_{\Sscript{x}}(v)_{\Sscript{t(h)}}(\upphi_{\Sscript{1}}(h)p) = \sigma_{\Sscript{t(h)}}(\upphi_{\Sscript{1}}(h)pv),
\end{gather*}
and this shows the commutativity of \eqref{Eq:digPhi}. Therefore, $\Psi(\sigma)_{\Sscript{x}}\colon \Vx \to \upphi^*(\cW)_{\Sscript{x}}$ is a well defined linear map. Now, take an arrow $ g \in \Ga$, we have to check that
\begin{gather*}
\xymatrix@C=40pt{ \cV_{\Sscript{s(g)}} \ar@{->}^-{\Psi(\sigma)_{\Sscript{s(g)}}}[rr] \ar@{->}_-{\cV^{\Sscript{g}}}[d] & & \ar@{->}^-{{\rm
Nat} (\upphi^{\Sscript{g}}, \tilde{\cW} )}[d] {\rm Nat}\big(\upphi^{\Sscript{s(g)}}, \tilde{\cW}\big) \\ \cV_{\Sscript{t(g)}} \ar@{->}^-{\Psi(\sigma)_{\Sscript{t(g)}}}[rr] & & {\rm Nat}\big(\upphi^{\Sscript{t(g)}}, \tilde{\cW}\big) }
\end{gather*}
is a commutative diagram of $\Bbbk$-vector spaces. To this end, consider an arbitrary $u \in \Ho$ and an arrow $p \in \upphi^{\Sscript{t(g)}}_{\Sscript{u}}=\cG(t(g), \upphi_{\Sscript{0}}(u))$. Then, for any vector $v \in \cV_{\Sscript{s(g)}}$, we have from one hand that
\begin{gather*}
\big( {\rm Nat}\big(\upphi^{\Sscript{g}}, \tilde{\cW}\big) \circ \Psi(\sigma)_{\Sscript{s(g)}}(v) \big)_{\Sscript{u}}(p) = \big(\Psi(\sigma)_{\Sscript{s(g)}}(v) \circ \upphi^{\Sscript{g}}\big)_{\Sscript{u}}(p) =\Psi(\sigma)_{\Sscript{s(g)}}(v)_{\Sscript{u}} \circ
\upphi^{\Sscript{g}}_{\Sscript{u}}(p) \\
\hphantom{\big( {\rm Nat}\big(\upphi^{\Sscript{g}}, \tilde{\cW}\big) \circ \Psi(\sigma)_{\Sscript{s(g)}}(v) \big)_{\Sscript{u}}(p)}{} = \Psi(\sigma)_{\Sscript{s(g)}}(v)_{\Sscript{u}} \big(pg\big) =
\sigma_{\Sscript{u}} \big(pg v\big),
\end{gather*}
and from the other one, we have that
\begin{gather*}
\big( \Psi(\sigma)_{\Sscript{t(g)}}(gv)\big)_{\Sscript{u}}(p) = \sigma_{\Sscript{u}}(p(gv)) = \sigma_{\Sscript{u}}(pgv),
\end{gather*}
whence the commutativity of that diagram.
\end{proof}

Reciprocally, take a morphism $\gamma \in \hom{\cG}{\cV}{\upphi^*(\cW)}$, then for every object $u \in\Ho$,
we set
\begin{gather*}
\Phi(\gamma)_{\Sscript{u}}\colon \ \cV_{\Sscript{\upphi_{0}(u)}} \longrightarrow \Wu, \qquad \big( v \longmapsto \gamma_{\Sscript{\upphi_{0}(u)}}(v)_{\Sscript{u}}(\iota_{\Sscript{\upphi_{0}(u)}})\big).
\end{gather*}

\begin{Lemma}\label{lema:Phi}The family of linear maps $\{ \Phi(\gamma)\}_{\Sscript{u \in \Ho}}$ defines a natural transformation. That is, we have a morphism $\Phi(\gamma) \in \hom{\cH}{\upphi_{*}(\cV)}{\cW}$.
\end{Lemma}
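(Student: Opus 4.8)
The plan is to proceed exactly as in the proof of Lemma~\ref{lema:Psi}, but reading the two naturality conditions in the opposite order. First I would dispatch well-definedness: for each $u \in \Ho$ the map $\Phi(\gamma)_{\Sscript{u}}$ is the composite of the $\Bbbk$-linear map $\gamma_{\Sscript{\phio(u)}}\colon \cV_{\Sscript{\phio(u)}} \to \upphi^{*}(\cW)_{\Sscript{\phio(u)}}=\Nat{\upphi^{\Sscript{\phio(u)}}}{\tilde{\cW}}$ with the evaluation $\alpha \mapsto \alpha_{\Sscript{u}}(\iota_{\Sscript{\phio(u)}})$. The latter is $\Bbbk$-linear precisely because the vector space structure on the space of natural transformations is defined componentwise, so each $\Phi(\gamma)_{\Sscript{u}}$ is linear. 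It then remains to verify the single naturality square: for every arrow $h \in \Ha$ one must show $\cW^{\Sscript{h}} \circ \Phi(\gamma)_{\Sscript{s(h)}} = \Phi(\gamma)_{\Sscript{t(h)}} \circ \upphi_{*}(\cV)^{\Sscript{h}}$, where $\upphi_{*}(\cV)^{\Sscript{h}}=\cV^{\phia(h)}$.

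Fix $v \in \cV_{\Sscript{\phio(s(h))}}$ and abbreviate $\beta := \gamma_{\Sscript{\phio(s(h))}}(v) \in \Nat{\upphi^{\Sscript{\phio(s(h))}}}{\tilde{\cW}}$. For the left-hand side I would use the naturality of $\beta$ with respect to the arrow $h \in \Ha$: the commuting square of $\beta$ reads $\cW^{\Sscript{h}} \circ \beta_{\Sscript{s(h)}} = \beta_{\Sscript{t(h)}} \circ \upphi^{\Sscript{\phio(s(h))}}(h)$, and by eq.~\eqref{Eq:Tartu} the transition map $\upphi^{\Sscript{\phio(s(h))}}(h)$ sends $\iota_{\Sscript{\phio(s(h))}} \mapsto \phia(h)\iota_{\Sscript{\phio(s(h))}}=\phia(h)$. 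Evaluating at $\iota_{\Sscript{\phio(s(h))}}$ therefore gives $\cW^{\Sscript{h}}\big(\Phi(\gamma)_{\Sscript{s(h)}}(v)\big)=\cW^{\Sscript{h}}\big(\beta_{\Sscript{s(h)}}(\iota_{\Sscript{\phio(s(h))}})\big)=\beta_{\Sscript{t(h)}}(\phia(h))$.

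For the right-hand side I would instead invoke the naturality of $\gamma$ itself, as a morphism in $\Rep{\cG}$, applied to the arrow $\phia(h) \in \Ga$ (whose source is $\phio(s(h))$ and target $\phio(t(h))$): this yields $\gamma_{\Sscript{\phio(t(h))}}\big(\cV^{\phia(h)}(v)\big)=\upphi^{*}(\cW)^{\phia(h)}(\beta)=\beta \circ \upphi^{\phia(h)}$. Taking the $t(h)$-component and evaluating at $\iota_{\Sscript{\phio(t(h))}}$, and using the description \eqref{Eq:Oslo} of $\upphi^{\phia(h)}_{\Sscript{t(h)}}$ (which sends $\iota_{\Sscript{\phio(t(h))}}\mapsto \iota_{\Sscript{\phio(t(h))}}\phia(h)=\phia(h)$), the right-hand side collapses to $\Phi(\gamma)_{\Sscript{t(h)}}\big(\cV^{\phia(h)}(v)\big)=\beta_{\Sscript{t(h)}}(\phia(h))$. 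The two computations land on the same element $\beta_{\Sscript{t(h)}}(\phia(h))$, which closes the square and shows $\Phi(\gamma)\in\hom{\cH}{\upphi_{*}(\cV)}{\cW}$. I expect the only real obstacle to be bookkeeping: keeping the two distinct naturalities apart (that of the outer transformation $\gamma$ with respect to arrows of $\cG$, and that of the inner transformation $\beta$ with respect to arrows of $\cH$) and checking that the opposite variance conventions of \eqref{Eq:Tartu} and \eqref{Eq:Oslo} make the identity arrows absorb $\phia(h)$ on the correct side, so that both sides reduce to exactly the same expression.
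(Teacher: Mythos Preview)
Your proposal is correct and follows essentially the same route as the paper's own proof: both compute the left-hand side via the naturality of the inner transformation $\beta=\gamma_{\Sscript{\phio(s(h))}}(v)$ with respect to $h\in\Ha$ (this is the paper's equation~\eqref{Eq:vW}), and the right-hand side via the naturality of $\gamma$ with respect to $\phia(h)\in\Ga$ (the paper's equations~\eqref{Eq:up} and~\eqref{Eq:upp}), arriving at the common value $\beta_{\Sscript{t(h)}}(\phia(h))$. Your explicit remark on linearity of $\Phi(\gamma)_{\Sscript{u}}$ is a small addition the paper leaves implicit.
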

\begin{proof}Let $h \in \Ha$ and a vector $v \in \cV_{\Sscript{s(h)}}$. Then, from one hand, we have
\begin{gather}\label{Eq:vW}
\cW^{\Sscript{h}} \circ \Phi(\gamma)_{\Sscript{s(h)}}(v) = \cW^{\Sscript{h}} \circ \gamma_{\Sscript{\phi_0(s(h))}}(v)_{\Sscript{s(h)}}(\iota_{\Sscript{\upphi_{0}(s(h))}}) =\gamma_{\Sscript{\phi_0(s(h))}}(v)_{
\Sscript{t(h)}}(\upphi_{\Sscript{1}}(h)),
\end{gather}
where the second equality follows from the fact that $\gamma_{\Sscript{\phi_0(s(h))}}(v) \in {\rm Nat}\big(\upphi^{\Sscript{\phi_0(s(h))}}, \tilde{\cW}\big)$. On the other hand, since $\gamma \in \hom{\cG}{\cV}{\upphi^*(\cW)}$, we know that
\begin{gather}\label{Eq:up}
\gamma_{\Sscript{\upphi_0(t(h))}}\big( \upphi_{\Sscript{1}}(h) v \big)_{\Sscript{u}} (p) = \gamma_{\Sscript{\upphi_0(s(h))}}( v)_{\Sscript{u}} \circ \upphi^{\Sscript{\upphi_1(h)}}_{\Sscript{u}}(p),
\end{gather}
for every $u \in \Ho$ and $p \in \upphi^{\Sscript{\upphi_1(h)}} (u)=\cG( \upphi_{\Sscript{0}}(t(h)),\upphi_{\Sscript{0}}(u) )$, see equation~\eqref{Eq:Oslo}. Substituting in equation~\eqref{Eq:up}, $u= t(h)$ and $p = \iota_{\Sscript{\upphi_0(t(h))}}$, we then get
\begin{gather}\label{Eq:upp}
\gamma_{\Sscript{\upphi_0(t(h))}}\big( \upphi_{\Sscript{1}}(h) v
\big)_{\Sscript{t(h)}} (\iota_{\Sscript{\upphi_0(t(h))}}) =
\gamma_{\Sscript{\upphi_0(s(h))}}(v)_{\Sscript{t(h)}}
\circ \upphi^{\Sscript{\upphi_1(h)}}_{\Sscript{t(h)}}(\iota_{\Sscript{\upphi_0(t(h))}})
= \gamma_{\Sscript{\upphi_0(s(h))}}(v)_{\Sscript{t(h)}}
\big( \upphi_1(h) \big).
\end{gather}
Therefore,
\begin{gather*}
\Phi(\gamma)_{\Sscript{t(h)}}\circ \cV^{\Sscript{\phia(h)}} (v) = \Phi(\gamma)_{\Sscript{t(h)}}\big( \phia(h) v \big) = \gamma_{\Sscript{\upphi_0(t(h))}}\big( \phia(h) v\big)_{\Sscript{t(h)}}\big(\iota_{\Sscript{\upphi_0(t(h))}} \big)\\
\hphantom{\Phi(\gamma)_{\Sscript{t(h)}}\circ \cV^{\Sscript{\phia(h)}} (v)}{}\overset{\eqref{Eq:upp}}{=} \gamma_{\Sscript{\upphi_0(s(h))}}(v)_{\Sscript{t(h)}}
\big( \upphi_1(h) \big) \overset{\eqref{Eq:vW}}{=} \gamma_{\Sscript{\phi_0(s(h))}}(v)_{ \Sscript{t(h)}}(\upphi_{\Sscript{1}}(h)) = \cW^{\Sscript{h}} \circ \Phi(\gamma)_{\Sscript{s(h)}}(v),
\end{gather*}
for every $h \in \Ha$ and $v \in \cV_{\Sscript{s(h)}}$, and this finishes the proof.
\end{proof}

\begin{Proposition}[right Frobenius reciprocity]\label{prop:FrobeniusR} Let $\upphi\colon \cH \to \cG$ be a morphism of groupoids and consider the restriction $\upphi_{*}\colon \Rep{\cG} \to \Rep{\cH}$ and the induction $\upphi^{*}\colon \Rep{\cH} \to \Rep{\cG}$ functors. Then the maps $\Psi$ and $\Phi$ described, respectively, in Lemmas~{\rm \ref{lema:Psi}} and~{\rm \ref{lema:Phi}}, define a~natural isomorphism
\begin{gather*}
\xymatrix@R=0pt{ \hom{\cH}{\upphi_{*}(\cV)}{\cW} \ar@<1ex>@{->}^-{\Psi}[rr] & & \hom{\cG}{\cV}{\upphi^*(\cW)},\ar@<1ex>@{->}^-{\Phi}[ll] }
\end{gather*}
for every $\cG$-representation $\cV$ and $\cH$-representation $\cW$. In other words, the induction functor is a right adjoint functor of the restriction functor.
\end{Proposition}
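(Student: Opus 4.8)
The plan is to take the two explicit families $\Psi$ and $\Phi$ of \eqref{Eq:Psigma} and of Lemma~\ref{lema:Phi}, which already land in the correct $\hom$-spaces by Lemmas~\ref{lema:Psi} and~\ref{lema:Phi}, and to verify that they are mutually inverse and natural in both $\cV$ and $\cW$. Because both maps are defined by hand, the entire statement reduces to two ``triangle'' computations together with a routine naturality check; no adjoint machinery or universal properties of limits are invoked.

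First I would establish $\Phi \circ \Psi = \mathrm{Id}$. For $\sigma \in \hom{\cH}{\upphi_{*}(\cV)}{\cW}$, an object $u \in \Ho$ and a vector $v \in \cV_{\Sscript{\phio(u)}}$, unwinding the two definitions gives
\begin{gather*}
\Phi(\Psi(\sigma))_{\Sscript{u}}(v) = \Psi(\sigma)_{\Sscript{\phio(u)}}(v)_{\Sscript{u}}\big(\iota_{\Sscript{\phio(u)}}\big) = \sigma_{\Sscript{u}}\big(\iota_{\Sscript{\phio(u)}} v\big) = \sigma_{\Sscript{u}}(v),
\end{gather*}
where the final equality uses that the identity arrow acts trivially on $\cV_{\Sscript{\phio(u)}}$ (since $\cV$ is a functor and the action is the one of \eqref{Eq:actionV}). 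Hence $\Phi(\Psi(\sigma)) = \sigma$.

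The crux is the reverse identity $\Psi \circ \Phi = \mathrm{Id}$, which I expect to be the main obstacle, because it forces one to combine the naturality of $\gamma$ with the contravariant action \eqref{Eq:Oslo} of the induction functor while keeping the direction of the arrow $p$ straight. Given $\gamma \in \hom{\cG}{\cV}{\upphi^{*}(\cW)}$, an object $u \in \Ho$, a vector $v \in \Vx$ and an arrow $p \in \cG(x,\phio(u))$, the definitions yield
\begin{gather*}
\Psi(\Phi(\gamma))_{\Sscript{x}}(v)_{\Sscript{u}}(p) = \Phi(\gamma)_{\Sscript{u}}(pv) = \gamma_{\Sscript{\phio(u)}}(pv)_{\Sscript{u}}\big(\iota_{\Sscript{\phio(u)}}\big).
\end{gather*}
Viewing $p$ as an arrow $x \to \phio(u)$ in $\cG$ and applying the naturality square of $\gamma$ to $p$, I would rewrite $\gamma_{\Sscript{\phio(u)}}(pv) = \gamma_{\Sscript{\phio(u)}}\big(\cV^{\Sscript{p}}(v)\big) = \upphi^{*}(\cW)^{\Sscript{p}}\big(\gamma_{\Sscript{x}}(v)\big) = \gamma_{\Sscript{x}}(v) \circ \upphi^{\Sscript{p}}$, the last step by the definition \eqref{Eq:Phistarup} of $\upphi^{*}$ on arrows. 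Evaluating the $u$-component at $\iota_{\Sscript{\phio(u)}}$ and using that, by \eqref{Eq:Oslo}, the map $\upphi^{\Sscript{p}}_{\Sscript{u}}$ sends $\iota_{\Sscript{\phio(u)}}$ to $\iota_{\Sscript{\phio(u)}} p = p$, I obtain $\gamma_{\Sscript{\phio(u)}}(pv)_{\Sscript{u}}\big(\iota_{\Sscript{\phio(u)}}\big) = \gamma_{\Sscript{x}}(v)_{\Sscript{u}}(p)$. Thus $\Psi(\Phi(\gamma)) = \gamma$, so $\Psi$ and $\Phi$ are mutually inverse bijections for each fixed pair $(\cV,\cW)$.

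Finally I would verify naturality of the isomorphism in both arguments, i.e.\ that for morphisms $\tF \in \hom{\cG}{\cV'}{\cV}$ and $\tG \in \hom{\cH}{\cW}{\cW'}$ the bijection $\Psi$ intertwines the pre- and post-composition maps induced on the two $\hom$-spaces by $\upphi_{*}(\tF)$, $\upphi^{*}(\tG)$ on one side and $\tF$, $\tG$ on the other. This is a direct chase from \eqref{Eq:Psigma}: the assignment $\sigma \mapsto \big(v \mapsto [\,p \mapsto \sigma_{\Sscript{u}}(pv)\,]\big)$ commutes with composing by $\tF$ or $\tG$ because $\tF$ and $\tG$ are $\cG$- respectively $\cH$-equivariant, so they respect the actions $pv=\cV^{\Sscript{p}}(v)$ used in the formula. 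This produces the desired natural isomorphism and shows that $\upphi^{*}$ is a right adjoint of $\upphi_{*}$.
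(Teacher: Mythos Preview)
Your proof is correct and follows essentially the same route as the paper: both verify $\Phi\circ\Psi=\mathrm{Id}$ by plugging in the identity arrow, and both handle the key step $\Psi\circ\Phi=\mathrm{Id}$ by invoking the naturality of $\gamma$ along the arrow $p$ and then using $\upphi^{\Sscript{p}}_{\Sscript{u}}(\iota_{\Sscript{\phio(u)}})=p$ from \eqref{Eq:Oslo}. The only cosmetic difference is that the paper dismisses naturality in $\cV$ and $\cW$ with the phrase ``fulfilled by construction'', whereas you spell out a short justification; both amount to the same computation.
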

\begin{proof}The naturality of both $\Psi$ and $\Phi$ are fulfilled by construction. Let us check that they are mutually inverse. So fixing $\sigma \in \hom{\cH}{\upphi_*(\cV)}{\cW}$ and $\gamma \in \hom{\cG}{\cV}{\upphi^*(\cW)}$, for every $u \in \Ho$ and $v \in \cV_{\Sscript{\phi_0(u)}}$, we have that
\begin{gather*}
\Phi\big( \Psi(\sigma)\big)_{\Sscript{u}}(v) =\Psi(\sigma)_{\Sscript{\phi_0(u)}}\big( v\big)_{\Sscript{u}}(\iota_{\Sscript{\upphi_0(u)}}) = \sigma_{\Sscript{u}}(v),
\end{gather*}
which implies that $\Phi \circ \Psi= {\rm id}$. On the other way around, for every $x \in \Go$, $w \in \Vx$, $u \in \Ho$ and $p \in \upphi^{\Sscript{x}}(u)=\cG(x,\upphi_{\Sscript{0}}(u))$, we have that
\begin{gather*}
\Psi\big( \Phi(\gamma)\big)_{\Sscript{x}}(w)_{\Sscript{u}}(p) = \Phi(\gamma)_{\Sscript{u}}(p w) =\gamma_{\Sscript{\phi_{\Sscript{0}}(u)}}\big( p w\big)_{\Sscript{u}}(\iota_{\Sscript{\phi_0(u)}}) =
 \big( {\rm Nat}\big(\upphi^{\Sscript{p}}, \tilde{\cW}\big) \circ \gamma_{\Sscript{x}}\big)(w)_{\Sscript{u}}(\iota_{\Sscript{\phi_0(u)}})\\
\hphantom{\Psi\big( \Phi(\gamma)\big)_{\Sscript{x}}(w)_{\Sscript{u}}(p)}{}
 = \gamma_{\Sscript{x}}(w)_{\Sscript{u}} \circ \upphi^{\Sscript{p}}_{\Sscript{u}}(\iota_{\Sscript{\phi_0(u)}}) = \gamma_{\Sscript{x}}(w)_{\Sscript{u}}\big(\iota_{\Sscript{\phi_0(u)}} p\big) =
\gamma_{\Sscript{x}}(w)_{\Sscript{u}}(p),
\end{gather*}
where in the third equality we have used the naturality of $\gamma$. Therefore, for every $x \in \Go$ and $w \in \Vx$, we have checked that $\Psi\big( \Phi(\gamma)\big)_{\Sscript{x}}(w)= \gamma_{\Sscript{x}}(w)$. This means that $\Psi \circ \Phi (\gamma)= \gamma$, for an arbitrary $\gamma$, which implies that $\Psi \circ \Phi = {\rm id}$ and this finishes the proof.
\end{proof}

\subsection{Left Frobenius reciprocity formula}\label{ssec:LAdj}

Keep the notations occurring in Section~\ref{ssec:coInd}. Next we proceed to show that the co-induction functor is a left adjoint functor of the restriction functor. To this end, the subsequent lemma is needed. We consider then a morphism of groupoids $\upphi\colon \cH \to \cG$.
\begin{Lemma}\label{lema:Gamma} Let $\cV$ and $\cW$ be, respectively, a $\cG$-representation and $\cH$-representation. For any morphism $\theta \in \hom{\cG}{\pls(\cW)}{\cV}$, the family of $\Bbbk$-linear maps:
\begin{gather*}
\bigg\{ \xymatrix@C=40pt{ \Gamma(\theta)_{\Sscript{u}}\colon \Wu =
\vV^{\Sscript{\upphi,\upphi(u)}}_*(\cW)_{\Sscript{(\iota_{\upphi(u)}, u)}}
\ar@{->}^-{\upupsilon^{\Sscript{\upphi(u)}}_{\Sscript{(\iota_{\upphi(u)},
 u)}} }[r] & \pls(\cW)_{\Sscript{\upphi(u)}}
\ar@{->}^-{\theta_{\Sscript{\upphi(u)}}}[r] & \cV_{\Sscript{\upphi(u)}} }\bigg\}_{u \in \Ho}
\end{gather*}
defines a morphism $\Gamma(\theta) \in \hom{\cH}{\cW}{\upphi_{*}(\cV)}$.
\end{Lemma}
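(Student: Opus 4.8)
The map $\Gamma(\theta)_{u}$ is already well defined, being the composite of two prescribed $\Bbbk$-linear maps, so the entire content of the lemma is the \emph{naturality} of the family $\{\Gamma(\theta)_{u}\}_{u \in \Ho}$, i.e., that it constitutes a morphism in $\Rep{\cH}$ from $\cW$ to $\upphi_{*}(\cV) = \cV \circ \upphi$. Since $\upphi_{*}(\cV)^{h} = \cV^{\phia(h)}$ for every arrow $h \in \Ha$, the plan is to verify the single identity $\cV^{\phia(h)} \circ \Gamma(\theta)_{s(h)} = \Gamma(\theta)_{t(h)} \circ \cW^{h}$ as maps $\cW_{s(h)} \to \cV_{\phio(t(h))}$, for an arbitrary $h \in \Ha$.

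I would assemble the argument from three ingredients, all already available. First, the $\cG$-equivariance of $\theta \in \hom{\cG}{\pls(\cW)}{\cV}$ applied to the arrow $\phia(h) \in \Ga$ gives $\cV^{\phia(h)} \circ \theta_{\phio(s(h))} = \theta_{\phio(t(h))} \circ \pls(\cW)^{\phia(h)}$. Second, the defining property \eqref{Eq:lphiup} of the arrow action of the co-induction functor, evaluated at $g = \phia(h)$ and at the element $(\iota_{\phio(s(h))}, s(h)) \in \varsigma^{-1}(\{\phio(s(h))\})$, yields $\pls(\cW)^{\phia(h)} \circ \upupsilon^{\phio(s(h))}_{(\iota_{\phio(s(h))}, s(h))} = \upupsilon^{\phio(t(h))}_{(\phia(h), s(h))}$ once we simplify $\phia(h)\iota_{\phio(s(h))} = \phia(h)$. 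Third, the arrow $\big((\iota_{\phio(t(h))}, t(h)), h\big)$ of the translation groupoid $\cH^{\upphi, \phio(t(h))}$ has source $(\phia(h), s(h))$ and target $(\iota_{\phio(t(h))}, t(h))$, so the universal property of the inductive limit \eqref{Eq:SPx} forces the relation $\upupsilon^{\phio(t(h))}_{(\iota_{\phio(t(h))}, t(h))} \circ \cW^{h} = \upupsilon^{\phio(t(h))}_{(\phia(h), s(h))}$, using $\iota_{\phio(t(h))}\phia(h) = \phia(h)$.

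With these in hand the computation is short. Unfolding the left-hand side as $\cV^{\phia(h)} \circ \theta_{\phio(s(h))} \circ \upupsilon^{\phio(s(h))}_{(\iota_{\phio(s(h))}, s(h))}$ and applying the first ingredient followed by the second collapses it to $\theta_{\phio(t(h))} \circ \upupsilon^{\phio(t(h))}_{(\phia(h), s(h))}$. Unfolding the right-hand side as $\theta_{\phio(t(h))} \circ \upupsilon^{\phio(t(h))}_{(\iota_{\phio(t(h))}, t(h))} \circ \cW^{h}$ and applying the third ingredient collapses it to the very same expression. Hence the two sides coincide and $\Gamma(\theta)$ is natural.

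The routine portion is simply expanding the composites from the definition of $\Gamma(\theta)$; the delicate step---and the one I expect to be the main obstacle---is the correct bookkeeping of the structural maps $\upupsilon^{x}_{(a,u)}$ of the various inductive limits: one must track in which fibre $\varsigma^{-1}(\{x\})$ each index $(a,u)$ sits, and recognize that the two \emph{a priori} different mechanisms at work---the $\cG$-action on the limit coming from \eqref{Eq:lphiup}, and the internal identification within a single limit coming from an arrow of $\cH^{\upphi, \phio(t(h))}$---both funnel the computation through the common representative $(\phia(h), s(h))$, which is exactly what makes the two sides meet.
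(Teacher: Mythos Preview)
Your proof is correct and follows essentially the same route as the paper's: the paper packages your three ingredients into a single commutative diagram whose right square is the naturality of $\theta$, whose upper-left triangle is the instance of \eqref{Eq:lphiup} you invoke, and whose lower-left triangle is the colimit identification coming from the arrow of $\cH^{\upphi,\phio(t(h))}$ with source $(\phia(h),s(h))$ and target $(\iota_{\phio(t(h))},t(h))$. Your exposition makes the bookkeeping of the indices $(a,u)$ more transparent than the paper's compressed diagram (whose diagonal map $\mathfrak{f}$ is in fact just your $\upupsilon^{\phio(t(h))}_{(\phia(h),s(h))}$ composed with the identity $\zeta$).
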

\begin{proof}Given an arrow $h \in \Ha$, we set $\mathfrak{f}=\upupsilon^{\Sscript{\upphi(t(h))}}_{\Sscript{(\iota_{\upphi(t(h))},t(h))}} \circ\zeta^{\Sscript{\upphi(h)}}_{\Sscript{(\iota_{\phi(s(h))},s(h))}}$, see diagram~\eqref{Eq:lphiup}. Then, we have a commutative diagram
\begin{gather*}
\xymatrix@R=35pt{\vV^{\Sscript{\upphi,
\upphi(s(h))}}_*(\cW)_{\Sscript{(\iota_{\upphi(s(h))},s(h))}} =\cW_{\Sscript{s(h)}}
\ar@{->}^-{\upupsilon^{\Sscript{\upphi(s(h))}}_{\Sscript{(\iota_{\upphi(s(h))},s(h))}}}[rr] \ar@{->}_-{\Wh}[d] \ar@{->}^-{\mathfrak{f}}[drr] &
& \pls(\cW)_{\Sscript{\upphi(s(h))}}\ar@{->}^-{\theta_{\Sscript{\upphi(s(h))}}}[rr]\ar@{->}^-{\pls(\cW)^{\upphi(h)}}[d] & & \cV_
{\Sscript{\upphi(s(h))}} \ar@{->}^-{\cV^{\Sscript{\upphi(h)}}}[d]
\\ \vV^{\Sscript{\upphi, \upphi(t(h))}}_*(\cW)_{\Sscript{(\iota_{\upphi(t(h))}, t(h))}}=\cW_{\Sscript{t(h)}}
\ar@{->}_-{\upupsilon^{\Sscript{\upphi(t(h))}}_{\Sscript{(\iota_{\upphi(t(h))},t(h))}}}[rr] & & \pls(\cW)_{\Sscript{\upphi(t(h))}}\ar@{->}_-{\theta_{\Sscript{\upphi(t(h))}}}[rr] & & \cV_
{\Sscript{\upphi(t(h))}}, }
\end{gather*}
where the left hand square commutes, since the upper triangle is so by diagram \eqref{Eq:lphiup}, while the lower triangle commutes because of the limit defining $\pls(\cW)_{\Sscript{\upphi(t(h))}}$ and because the $\zeta$ map acts by identities. This shows that $\Gamma(\theta)\colon \cW \to \upphi_{*}(\cV)$ is a natural transformation, as desired.
\end{proof}

In the other way around, consider $\delta \in \hom{\cH}{\cW}{\upphi_{*}(\cV)}$. For a fixed object $x \in \Go$, we set the following family of $\Bbbk$-linear maps
\begin{gather}\label{Eq:S}
\Big\{ \xymatrix{ \upsigma^{\Sscript{x}}_{\Sscript{(a, u)}}\colon \vV^{\Sscript{\upphi, x}}_*(\cW)_{\Sscript{(a, u)}} = \Wu
\ar@{->}^-{\delta_u}[r] & \cV_{\Sscript{\upphi(u)}}
\ar@{->}^-{\cV^{\Sscript{a}}}[r] & \Vx } \Big\}_{\Sscript{(a,u) \in \cH^{\Sscript{\upphi, x}}}},
\end{gather}
where $\cH^{\Sscript{\upphi, x}}$ is, as in Section~\ref{ssec:coInd}, the right translation groupoid $\varsigma^{-1}(\{x\})\rJoin \cH$. It is from it own definition that the family $\big\{ \upsigma^{\Sscript{x}}_{\Sscript{(a,
u)}}\big\}_{\Sscript{(a, u) \in \cH^{\Sscript{\upphi, x}}}}$ is an inductive system of $\Bbbk$-vector spaces. Therefore, for every $x \in \Go$, there is a unique $\Bbbk$-linear map
\begin{gather}\label{Eq:SI}
\Sigma(\delta)_{\Sscript{x}} = \injlimit{(a, u) \in \cH^{\Sscript{\upphi,x}}}{\upsigma^{\Sscript{x}}_{\Sscript{(a, u)}}} \colon \
{}^*\upphi(\cW)_{\Sscript{x}} \longrightarrow \Vx
\end{gather}
such that $\Sigma(\delta)_{\Sscript{x}} \circ \upupsilon^{\Sscript{x}}_{\Sscript{(a, u)}} = \upsigma^{\Sscript{x}}_{\Sscript{(a, u)}}$, for any object $(a,u)$ in $\cH^{\Sscript{\upphi,x}}$. Furthermore, given an arrow $g \in
\Ga$ and an object $(a,u)$ in $\cH^{\Sscript{\upphi,\ x}}$, we have a~commutative diagram
\begin{gather}\label{Eq:SII}
\xymatrix@R=25pt{ \cV_{\Sscript{s(g)}} \ar@{->}^-{\Vg}[rr] & &
\cV_{\Sscript{t(g)}} \\ \vV^{\Sscript{\upphi,
s(g)}}_*(\cW)_{\Sscript{(a, u)}}
\ar@{->}^-{\upsigma^{\Sscript{s(g)}}_{\Sscript{(a, u)}}}[u]
\ar@{->}^-{\zeta^g_{(a, u)}}[rr] & &
\ar@{->}_-{\upsigma^{\Sscript{t(g)}}_{\Sscript{(ga, u)}}}[u]
\vV^{\Sscript{\upphi, t(g)}}_*(\cW)_{\Sscript{(ga, u)}}, }
\end{gather}
where $\zeta^g_{(a, u)}$ is as in diagram \eqref{Eq:lphiup}.

\begin{Lemma}\label{lema:Sigma}Let $\cW$ and $\cV$ be, respectively, an $\cH$-representation and a~$\cG$-representation with a morphism $\delta \in \hom{\cH}{\cW}{\upphi_{*}(\cV)}$. Then the family of $\Bbbk$-linear
maps $\big\{ \Sigma(\delta)_{\Sscript{x}} \big\}_{\Sscript{ x \in \Go}}$ of equation~\eqref{Eq:SI}, defines a morphism $\Sigma(\delta) \in \hom{\cG}{{}^*\upphi(\cW)}{\cV}$.
\end{Lemma}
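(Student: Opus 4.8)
The statement to establish is the naturality of the family $\{\Sigma(\delta)_{\Sscript{x}}\}_{x \in \Go}$ constructed in equation~\eqref{Eq:SI}; concretely, that for every arrow $g \in \Ga$ the square
\begin{gather*}
\xymatrix@C=45pt@R=35pt{ \pls(\cW)_{\Sscript{s(g)}} \ar@{->}^-{\Sigma(\delta)_{\Sscript{s(g)}}}[rr] \ar@{->}|-{\pls(\cW)^{\Sscript{g}}}[d] & & \cV_{\Sscript{s(g)}} \ar@{->}|-{\cV^{\Sscript{g}}}[d] \\ \pls(\cW)_{\Sscript{t(g)}} \ar@{->}^-{\Sigma(\delta)_{\Sscript{t(g)}}}[rr] & & \cV_{\Sscript{t(g)}} }
\end{gather*}
commutes. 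The plan is to exploit that, by equation~\eqref{Eq:SPx}, the space $\pls(\cW)_{\Sscript{s(g)}}$ is a directed colimit with structural cone $\big\{\upupsilon^{\Sscript{s(g)}}_{\Sscript{(a, u)}}\big\}_{(a,u) \in \varsigma^{-1}(\{s(g)\})}$. Since both composites in the square are $\Bbbk$-linear maps out of this colimit, the universal property reduces the problem to checking that they agree after precomposition with each structural map $\upupsilon^{\Sscript{s(g)}}_{\Sscript{(a, u)}}$, an object-by-object verification on the cone rather than on vectors.

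I would then compute the two composites on a fixed $(a,u) \in \varsigma^{-1}(\{s(g)\})$. For the left-then-bottom route, diagram~\eqref{Eq:lphiup} gives $\pls(\cW)^{\Sscript{g}} \circ \upupsilon^{\Sscript{s(g)}}_{\Sscript{(a, u)}} = \upupsilon^{\Sscript{t(g)}}_{\Sscript{(ga, u)}} \circ \zeta^g_{(a, u)} = \upupsilon^{\Sscript{t(g)}}_{\Sscript{(ga, u)}}$, the last equality because $\zeta^g_{(a, u)}$ is the identity; applying $\Sigma(\delta)_{\Sscript{t(g)}}$ and the defining property $\Sigma(\delta)_{\Sscript{t(g)}} \circ \upupsilon^{\Sscript{t(g)}}_{\Sscript{(ga, u)}} = \upsigma^{\Sscript{t(g)}}_{\Sscript{(ga, u)}}$ from equation~\eqref{Eq:SI}, this route yields $\upsigma^{\Sscript{t(g)}}_{\Sscript{(ga, u)}}$. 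For the top-then-right route, the same defining property gives $\Sigma(\delta)_{\Sscript{s(g)}} \circ \upupsilon^{\Sscript{s(g)}}_{\Sscript{(a, u)}} = \upsigma^{\Sscript{s(g)}}_{\Sscript{(a, u)}}$, and then the commutativity of diagram~\eqref{Eq:SII} gives $\cV^{\Sscript{g}} \circ \upsigma^{\Sscript{s(g)}}_{\Sscript{(a, u)}} = \upsigma^{\Sscript{t(g)}}_{\Sscript{(ga, u)}}$. Both routes therefore land on the same map $\upsigma^{\Sscript{t(g)}}_{\Sscript{(ga, u)}}$, and the universal property of the colimit closes the argument. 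If one prefers to avoid invoking~\eqref{Eq:SII}, the equality $\cV^{\Sscript{g}} \circ \upsigma^{\Sscript{s(g)}}_{\Sscript{(a, u)}} = \upsigma^{\Sscript{t(g)}}_{\Sscript{(ga, u)}}$ is anyway immediate from the definition~\eqref{Eq:S}, since $\upsigma^{\Sscript{s(g)}}_{\Sscript{(a, u)}} = \cV^{\Sscript{a}} \circ \delta_{\Sscript{u}}$ and functoriality of $\cV$ gives $\cV^{\Sscript{g}} \cV^{\Sscript{a}} = \cV^{\Sscript{ga}}$.

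I do not anticipate a genuine obstacle: this lemma is the exact dual of Lemma~\ref{lema:Co-Induction}, where the same cone-wise reasoning was used to well-define $\pls(\cW)^{\Sscript{g}}$ itself, so the only subtlety is bookkeeping the colimit structural maps correctly and remembering that the transition maps $\zeta^g_{(a, u)}$ act as identities. Naturality of $\Sigma$ in the pair $(\cV,\cW)$, which is needed later to match it against $\Gamma$ as an adjunction isomorphism, is automatic from the construction and will not require separate verification here.
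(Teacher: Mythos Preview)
Your argument is correct and is essentially the paper's own proof: the paper assembles the same pieces into one commutative diagram containing the $\upupsilon$, $\upsigma$, $\zeta$ maps together with the upper square, observes that everything except the upper square commutes by construction and by~\eqref{Eq:SII}, and concludes via the universal property of the colimit. Your route-by-route computation unwinds that diagram in words, with no substantive difference.
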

\begin{proof}For any arrow $g \in \Ga$ and an object $(a,u)$ in $\cH^{\Sscript{\upphi, s(g)}}$, we have that the diagram
\begin{gather*}
\xymatrix@R=30pt{ & \cV_{\Sscript{s(g)}} \ar@{->}^-{\Vg}[rrr] & & &
\cV_{\Sscript{t(g)}} \\ {}^*\upphi(\cW)_{\Sscript{s(g)}}
\ar@{->}^-{{}^*\upphi(\cW)^g}[rrr]
\ar@{->}^-{\Sigma(\delta)_{\Sscript{s(g)}}}[ur] & & &
{}^*\upphi(\cW)_{\Sscript{t(g)}}
\ar@{->}^-{\Sigma(\delta)_{\Sscript{t(g)}}}[ur] & \\
\vV^{\Sscript{\upphi, s(g)}}_*(\cW)_{\Sscript{(a, u)}}
\ar@{->}@/_1pc /_->>>>{\upsigma^{\Sscript{s(g)}}_{\Sscript{(a,
u)}}}[uur] \ar@{->}^-{\zeta^g_{(a, u)}}[rrr]
\ar@{->}^-{\upupsilon^{\Sscript{s(g)}}_{\Sscript{(a, u)}}}[u] & &
& \ar@{->}^-{\upupsilon^{\Sscript{t(g)}}_{\Sscript{(ga, u)}}}[u]
\ar@{->}@/_1pc /_->>>>{\upsigma^{\Sscript{t(g)}}_{\Sscript{(ga,
u)}}}[uur] \vV^{\Sscript{\upphi, t(g)}}_*(\cW)_{\Sscript{(ga,
u)}} & }
\end{gather*}
commutes by the definition of the involved maps and diagram \eqref{Eq:SII}. Therefore, the upper square should commutes as~well, and this shows that $\Sigma(\delta)\colon {}^*\upphi(\cW) \to \cV $ is a morphism of $\cG$-representations, as claimed.
\end{proof}

\begin{Proposition}[left Frobenius reciprocity]\label{prop:FrobeniusL} Let $\upphi\colon \cH \to \cG$ be a morphism of groupoids and consider the restriction $\upphi_{*}\colon \Rep{\cG} \to \Rep{\cH}$ and the co-induction ${}^{*}\upphi\colon \Rep{\cH} \to \Rep{\cG}$ functors. Then the maps $\Gamma$ and $\Sigma$ described, respectively, in Lemmas~{\rm \ref{lema:Gamma}} and~{\rm \ref{lema:Sigma}}, define a natural isomorphism
\begin{gather*}
\xymatrix@R=0pt{ \hom{\cH}{\cW}{\upphi_{*}(\cV)}
\ar@<1ex>@{->}^-{\Sigma}[rr] & & \hom{\cG}{{}^*\upphi(\cW)}{\cV},
\ar@<1ex>@{->}^-{\Gamma}[ll] }
\end{gather*}
for every $\cG$-representation $\cV$ and $\cH$-representation $\cW$. In other words, the co-induction functor is a left adjoint functor of the restriction functor.
\end{Proposition}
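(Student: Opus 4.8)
The plan is to establish that $\Gamma$ and $\Sigma$ are mutually inverse natural transformations, since their naturality follows automatically from the universal property of the colimit and from the functoriality of the constructions carried out in Lemmas~\ref{lema:Gamma} and~\ref{lema:Sigma}. The essential content is therefore the pair of identities $\Sigma \circ \Gamma = {\rm id}$ and $\Gamma \circ \Sigma = {\rm id}$, which I verify by unwinding the definitions of the structural maps $\upupsilon$ of the colimit \eqref{Eq:SPx} together with the formulas \eqref{Eq:S} and \eqref{Eq:SI}. The key technical tool throughout is that the colimit $\pls(\cW)_{\Sscript{x}}$ is generated by the images of the structural maps $\upupsilon^{\Sscript{x}}_{\Sscript{(a,u)}}$, so that any two $\Bbbk$-linear maps out of it agree once they agree after precomposition with every $\upupsilon^{\Sscript{x}}_{\Sscript{(a,u)}}$.

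First I would check that $\Sigma \circ \Gamma = {\rm id}$. Fix $\delta \in \hom{\cH}{\cW}{\upphi_{*}(\cV)}$ and compute $\Gamma(\Sigma(\delta))$: for $u \in \Ho$, by Lemma~\ref{lema:Gamma} this equals $\Sigma(\delta)_{\Sscript{\upphi(u)}} \circ \upupsilon^{\Sscript{\upphi(u)}}_{\Sscript{(\iota_{\upphi(u)},u)}}$, which by the defining property $\Sigma(\delta)_{\Sscript{x}} \circ \upupsilon^{\Sscript{x}}_{\Sscript{(a,u)}} = \upsigma^{\Sscript{x}}_{\Sscript{(a,u)}}$ of \eqref{Eq:SI} equals $\upsigma^{\Sscript{\upphi(u)}}_{\Sscript{(\iota_{\upphi(u)},u)}} = \cV^{\Sscript{\iota_{\upphi(u)}}} \circ \delta_{\Sscript{u}} = \delta_{\Sscript{u}}$, using $\cV^{\Sscript{\iota_{\upphi(u)}}} = {\rm id}$. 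Hence $\Gamma \circ \Sigma = {\rm id}$. For the reverse composite, fix $\theta \in \hom{\cG}{\pls(\cW)}{\cV}$ and compute $\Sigma(\Gamma(\theta))_{\Sscript{x}}$. By the generation remark it suffices to precompose with $\upupsilon^{\Sscript{x}}_{\Sscript{(a,u)}}$ for an arbitrary object $(a,u) \in \cH^{\Sscript{\upphi,x}}$, which gives $\upsigma^{\Sscript{x}}_{\Sscript{(a,u)}} = \cV^{\Sscript{a}} \circ \Gamma(\theta)_{\Sscript{u}} = \cV^{\Sscript{a}} \circ \theta_{\Sscript{\upphi(u)}} \circ \upupsilon^{\Sscript{\upphi(u)}}_{\Sscript{(\iota_{\upphi(u)},u)}}$.

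The crux, which I expect to be the main obstacle, is to show that this last expression coincides with $\theta_{\Sscript{x}} \circ \upupsilon^{\Sscript{x}}_{\Sscript{(a,u)}}$, so that $\Sigma(\Gamma(\theta))_{\Sscript{x}}$ and $\theta_{\Sscript{x}}$ agree on all generators. The trick is to realize $(a,u) = (a,u)$ as the image of $(\iota_{\upphi(u)},u)$ under the right $\cG$-action, noting that $a \in \cG(\upphi(u),x)$ so that $a \cdot (\iota_{\upphi(u)},u)$ lands in $\varsigma^{-1}(\{x\})$; more precisely, the arrow $a$ of $\cG$ induces via diagram~\eqref{Eq:lphiup} (with $g = a$) the identity $\pls(\cW)^{\Sscript{a}} \circ \upupsilon^{\Sscript{\upphi(u)}}_{\Sscript{(\iota_{\upphi(u)},u)}} = \upupsilon^{\Sscript{x}}_{\Sscript{(a\iota_{\upphi(u)},u)}} = \upupsilon^{\Sscript{x}}_{\Sscript{(a,u)}}$, because the transition map $\zeta$ acts by the identity. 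Combining this with the naturality of $\theta$, namely $\cV^{\Sscript{a}} \circ \theta_{\Sscript{\upphi(u)}} = \theta_{\Sscript{x}} \circ \pls(\cW)^{\Sscript{a}}$, yields
\begin{gather*}
\cV^{\Sscript{a}} \circ \theta_{\Sscript{\upphi(u)}} \circ \upupsilon^{\Sscript{\upphi(u)}}_{\Sscript{(\iota_{\upphi(u)},u)}} = \theta_{\Sscript{x}} \circ \pls(\cW)^{\Sscript{a}} \circ \upupsilon^{\Sscript{\upphi(u)}}_{\Sscript{(\iota_{\upphi(u)},u)}} = \theta_{\Sscript{x}} \circ \upupsilon^{\Sscript{x}}_{\Sscript{(a,u)}},
\end{gather*}
as required. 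Since the generators $\upupsilon^{\Sscript{x}}_{\Sscript{(a,u)}}$ span the colimit, this forces $\Sigma(\Gamma(\theta))_{\Sscript{x}} = \theta_{\Sscript{x}}$ for every $x \in \Go$, hence $\Sigma \circ \Gamma = {\rm id}$. Finally I would record that naturality in both $\cV$ and $\cW$ holds by construction, so $\Gamma$ and $\Sigma$ assemble into the asserted natural isomorphism, exhibiting $\pls$ as left adjoint to $\upphi_{*}$.
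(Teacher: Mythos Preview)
Your proof is correct and follows essentially the same approach as the paper: both verify $\Gamma\circ\Sigma={\rm id}$ by evaluating at the structural map $\upupsilon^{\Sscript{\upphi(u)}}_{\Sscript{(\iota_{\upphi(u)},u)}}$, and both verify $\Sigma\circ\Gamma={\rm id}$ by precomposing with an arbitrary $\upupsilon^{\Sscript{x}}_{\Sscript{(a,u)}}$, invoking the naturality of $\theta$ and diagram~\eqref{Eq:lphiup} with $g=a$ exactly as you do. The only blemish is a harmless slip in your opening sentence, where you announce ``$\Sigma\circ\Gamma={\rm id}$'' but then (correctly) compute $\Gamma(\Sigma(\delta))$ and conclude $\Gamma\circ\Sigma={\rm id}$.
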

\begin{proof}For a given $\delta \in\hom{\cH}{\cW}{\upphi_{*}(\cV)} $ and $u \in\Ho$, we know from Lemma~\ref{lema:Gamma} that
\begin{gather*}
\xymatrix@C=40pt{ \Gamma(\Sigma(\delta)){\Sscript{u}}\colon \vV^{\Sscript{\upphi,
\upphi(u)}}_*(\cW)_{\Sscript{(\iota_{\upphi(u), u})}}
\ar@{->}^-{\upupsilon^{\Sscript{\upphi(u)}}_{\Sscript{(\iota_{\upphi(u)},
u)}}}[r] & {}^*\upphi(\cW)_{\Sscript{\upphi(u)}}
\ar@{->}^-{\Sigma(\delta)_{\Sscript{\phi(u)}}}[r] &
\cV_{\Sscript{\upphi(u)}}. }
\end{gather*}
Therefore,
\begin{gather*}
\Gamma(\Sigma(\delta)){\Sscript{u}} = \Sigma(\delta)_{\Sscript{\phi(u)}} \circ\upupsilon^{\Sscript{\upphi(u)}}_{\Sscript{(\iota_{\upphi(u)},
u)}} \overset{\eqref{Eq:SI}}{=}
\upsigma^{\Sscript{\upphi(u)}}_{\Sscript{(\iota_{\upphi(u)}, u)}}
 = \delta_{\Sscript{u}}
\end{gather*}
for every object $u \in\Ho$, and so $\Gamma \circ \Sigma = {\rm id}$. Conversely, starting with a morphism $\theta \in \hom{\cG}{\pls(\cW)}{\cV}$ and an object $x \in \Go$, we have that
\begin{gather*}
\Sigma( \Gamma(\theta))_{\Sscript{x}} \overset{\eqref{Eq:S},\, \eqref{Eq:SI}}{=} \injlimit{(a, u) \in \cH^{\upphi,
x}}{\cV^{\Sscript{a}} \circ \Gamma(\theta)_{\Sscript{u}}}
 \overset{\ref{lema:Gamma}}{=} \injlimit{(a, u) \in\cH^{\upphi, x}}{ \cV^{\Sscript{a}} \circ \theta_{\Sscript{\upphi(u)}} \circ \upupsilon^{\Sscript{\upphi(u)}}_{(\iota_{\upphi(u)}, u)} }\\
\hphantom{\Sigma( \Gamma(\theta))_{\Sscript{x}}}{}
 = \injlimit{(a, u) \in \cH^{\upphi, x}}{\theta_{\Sscript{x}} \circ {}^*\upphi(\cW)^{\Sscript{g}} \circ
\upupsilon^{\Sscript{\upphi(u)}}_{(\iota_{\upphi(u)}, u)} }
= \injlimit{(a, u) \in \cH^{\upphi, x}}{
\theta_{\Sscript{x}} \circ \upupsilon^{\Sscript{x}}_{(a, u)}} \\
\hphantom{\Sigma( \Gamma(\theta))_{\Sscript{x}}}{}
= \theta_{\Sscript{x}} \circ \injlimit{(a, u) \in \cH^{\upphi, x}}{ \upupsilon^{\Sscript{x}}_{(a, u)} }
 = \theta_{\Sscript{x}},
\end{gather*}
where in the third equality we have used the naturality of $\theta$ and in the fourth one the diagram~\eqref{Eq:lphiup}. This shows that $\Sigma \circ \Gamma = {\rm id}$ and finishes the proof.
\end{proof}

\begin{Remark}\label{rem:LRKan} As the expertise reader can observe, the construction of the induction and co-induction functors $\upphi^{*}$ and ${}^{*}\upphi$ corresponds, respectively, (up to natural isomorphisms) to the well known universal construction of the right and left Kan extensions of the functor $\upphi_{*}$, see~\cite{MacLane} for more details. The proof presented here is somehow elementary and makes use of the groupoid structure, for instance the notion of translation groupoid among others. This also have the advantage of describing explicitly the natural isomorphisms establishing these adjuntions, which in fact is crucial to follow the arguments of the main result of the paper stated in the forthcoming section.
\end{Remark}

\section{Frobenius extensions in groupoids context}\label{sec:Frobenius}
The main aim of this section is to characterize Frobenius morphism of groupoids, see Definition~\ref{def:Frobenius} below. This in fact is a kind of an universal definition which can be applied to any functor with left and right adjoints functors. Typical examples are Frobenius algebras over a given field (or commutative ring), where the forgetful functor form the category of modules to vector spaces has isomorphic left and right adjoint functors, namely, the tensor and the homs functors (see~\cite{Kadison:1999}). Our main result can be seen also as an approach to Frobenius extensions of algebras with enough orthogonal idempotents.

\begin{Definition}\label{def:Frobenius}Let $\upphi \colon \cH \to \cG$ be a morphism of groupoids. We say that $\upphi$ is a \emph{Frobenius morphism} provided that the induction and the co-induction functors $\upphi^{*}$ and ${}^{*}\upphi$ are naturally isomorphic.
\end{Definition}

From now on, we will freely use the notations and the notions expounded in Section~\ref{ssec:notation}. So let us consider a morphism $\upphi\colon \cH \to \cG$ of groupoids, and denote by $\phi\colon A \to B$ the associated morphism of $\Bbbk$-vector space defined in equation~\eqref{Eq:phi}. If we assume that $\phio\colon \Go \to \Go$ is injective, then $\phi$ becomes a morphism of rings with enough orthogonal idempotents. The following counterexample shows that $\phi$ could not be multiplicative, without assuming $\phio$ injective:

\begin{Example}\label{exam:Phi} Let $f\colon X \to Y$ be a non injective map, by choosing two distinct elements $x, x' \in X$ whose images are equal $f(x)=f(x')$. Take $\cH:=(X,X)$ to be a trivial groupoid as in Example~\ref{exam:trivial}, and $\cG=(Y\times Y, Y)$ to be a groupoid of pairs as in Example~\ref{exam:X}(1). It is by definition that, for any two objects $y$, $y'$ in~$\cG$, there is only one arrow from $y$ to $y'$, namely, the one defined by the pair $(y,y')$. We denote by $\bd{1}_{\Sscript{(y, y')}}$ the image of this arrow in the ring $B$, so that we have $\bd{1}_{\Sscript{(z, z)}} = 1_{\Sscript{z}}$, for any $z \in Y$. In this situation, the associated rings with enough orthogonal idempotents are the direct sums of the form $A=\Bbbk^{(X)}$ and $B=\Bbbk^{(Y \times Y)}$, respectively. Consider the functor $\upphi\colon \cH \to \cG$ whose arrows map is $\phia\colon X \to Y \times Y$, which sends $\iota_{\Sscript{x}} \mapsto \iota_{\Sscript{(f(x), f(x))}}$, and its objects map is given by~$\phio=f$. Therefore, the $\Bbbk$-linear map $\phi\colon \Bbbk^{(X)} \to \Bbbk^{(Y \times Y)}$ attached to~$\upphi$ sends $1_{\Sscript{u}} \mapsto \bd{1}_{\Sscript{(f(u), f(u))}}$, for any $u \in X$. Now coming back to the chosen elements $x\neq x'$, we know that $\phi(1_{\Sscript{x'}}) = 1_{\Sscript{\phio(x')}}=1_{\Sscript{f(x')}}= 1_{\Sscript{f(x)}}=1_{\Sscript{\phio(x)}} =\phi(1_{\Sscript{x}})$, so that we get
\begin{gather*}
\phi( 1_{\Sscript{x}} . 1_{\Sscript{x'}}) = \phi(0) = 0 \qquad \text{and} \qquad \phi( 1_{\Sscript{x}}) . \phi(1_{\Sscript{x'}}) =1_{\Sscript{f(x)}} . 1_{\Sscript{f(x')}} =1_{\Sscript{f(x)}} \neq 0,
\end{gather*}
which show that $\phi( 1_{\Sscript{x}} . 1_{\Sscript{x'}}) \neq \phi( 1_{\Sscript{x}}) . \phi(1_{\Sscript{x'}})$, and so $\phi$ is not multiplicative.
\end{Example}

By scalar restriction, $B$ is considered as an $A$-bimodule, although, this is not necessarily an unital one. The underlying vector spaces of the unital left, right $A$-module and $A$-bimodule parts of $B$ are, respectively, given by the direct sums:
\begin{gather*}
AB = \bigoplus_{u \in \Ho, x \in \Go} \Bbbk \cG(x, \phio(u)),\qquad BA = \bigoplus_{u \in \Ho, x \in \Go} \Bbbk \cG(\phio(u), x), \nonumber\\
ABA = \bigoplus_{u, u' \in \Ho} \Bbbk \cG(\phio(u),\phio(u')).
\end{gather*}
Since we are in groupoids context, it is clear that $AB \cong BA$ as $\Bbbk$-vector spaces. We refer to~\cite{ElKaoutit:2009}, for more details on unital modules and on the notion of finitely generated and projective unital modules over rings with local units, specially their characterization by means of tensor and homs functors.

The following is our main result:
\begin{Theorem}\label{thm:FM}Let $\upphi\colon \cH \to \cG$ be a morphism of groupoids and consider as above the associated algebras $A$ and $B$, respectively. Assume that $\upphi_{\Sscript{0}}$ is an injective map. Then the following are equivalent.
\begin{enumerate}[$(i)$]\itemsep=0pt
\item $\upphi$ is a Frobenius morphism;
\item There exists a natural transformation $\Sf{E}_{\Sscript{(u, v)}}\colon \cG(\phio(u),\phio(v)) \longrightarrow \Bbbk \cH(u,v)$ in $\cH^{\rm op} \times \cH$, and for every $x \in \Go$, there exists a~finite set
$\{\big((u_{i},b_{i}), c_{i}\big)\}_{i=1, \dots, N} \in \varsigma^{-1}\big( \{x\}\big)\times \Bbbk \cG(x, \phio(u_{i}))$ such that, for every pair of elements $ (b, b') \in \cG(x, \phio(u)) \times \cG(\phio(u), x)$, we have
\begin{gather*}
\sum_{i}\Sf{E}(bb_{i})c_{i} = b \in \Bbbk \cG(x, \phio(u)) \qquad \text{and} \qquad b' = \sum_{i}b_{i}\Sf{E}(c_{i}b') \in \Bbbk \cG(\phio(u), x).
\end{gather*}
\item For every $x\in \Go$, the left unital $A$-module $AB1_{x}$ is finitely generated and projective and there is a natural isomorphism $B1_{u} \cong B\hom{A-}{AB}{A1_{u}}$, of left unital $B$-modules, for every $u\in \Ho$.
\end{enumerate}
\end{Theorem}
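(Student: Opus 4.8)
The plan is to transport the entire problem to the module categories of the path algebras and to recognize the Frobenius condition as the classical one for the extension $\phi\colon A\to B$. Since $A$ and $B$ are rings with enough orthogonal idempotents, there are equivalences of categories $\Rep{\cG}\simeq\lmod{B}$ and $\Rep{\cH}\simeq\lmod{A}$ with the respective categories of \emph{unital} left modules, obtained by sending a representation $\cV$ to $\bigoplus_{x\in\Go}\cV_{\Sscript{x}}$ and conversely decomposing a unital module along the idempotents $1_{\Sscript{x}}$. Because $\phio$ is injective, $\phi$ is a genuine ring homomorphism and, under these equivalences, the restriction functor $\phil$ corresponds to restriction of scalars along $\phi$ (the fibre of $\phil(\cV)$ at $u$ being $\cV_{\Sscript{\phio(u)}}$). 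First I would invoke Propositions~\ref{prop:FrobeniusR} and~\ref{prop:FrobeniusL}, which identify $\upphi^{*}$ and $\pls$ as the right and the left adjoint of $\phil$. On the module side the right and left adjoints of restriction of scalars are $\hom{A-}{B}{-}$ and $B\tensor{A}-$, so by uniqueness of adjoints $\upphi^{*}\cong\hom{A-}{B}{-}$ and $\pls\cong B\tensor{A}-$ after the equivalences. Hence $(i)$ is equivalent to the existence of a natural isomorphism $B\tensor{A}-\;\cong\;\hom{A-}{B}{-}$ of functors on unital left $A$-modules, which is the defining property of a Frobenius extension.

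Next I would prove $(i)\Leftrightarrow(iii)$. Evaluating the natural isomorphism $B\tensor{A}-\cong\hom{A-}{B}{-}$ on the unital modules $A1_{\Sscript{u}}$ and using $B\tensor{A}A1_{\Sscript{u}}\cong B1_{\Sscript{u}}$ reduces it to the family $B1_{\Sscript{u}}\cong B\hom{A-}{AB}{A1_{\Sscript{u}}}$ of left unital $B$-modules in~$(iii)$; the functor $\hom{A-}{AB}{-}$ commutes with the colimits needed to build this natural isomorphism exactly when each fibre $AB1_{\Sscript{x}}$ is finitely generated and projective as a left $A$-module, the remaining clause of~$(iii)$. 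Here the non-unital nature of $A$ and $B$ forces one to argue idempotent by idempotent, and I would use the characterization of finitely generated projective unital modules over rings with local units by means of the tensor and hom functors from~\cite{ElKaoutit:2009} to pass rigorously between ``$AB1_{\Sscript{x}}$ finitely generated projective together with fibrewise duality'' and the functorial isomorphism $B\tensor{A}-\cong\hom{A-}{B}{-}$.

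Finally I would establish $(ii)\Leftrightarrow(iii)$ by unwinding the data of~$(iii)$ into an explicit Frobenius system. The natural transformation $\Sf{E}$, being natural in $\cH^{\Sscript{\rm op}}\times\cH$, is precisely an $A$-bimodule homomorphism $E\colon B\to A$: it is forced to vanish on the components $\cG(x,x')$ not of the form $\cG(\phio(u),\phio(v))$ by unitality, and injectivity of $\phio$ makes this description unambiguous. The finite family $\{((u_{i},b_{i}),c_{i})\}_{i=1,\dots,N}$ over a fixed $x$ then plays the role of a \emph{local dual basis}: the first identity $\sum_{i}\Sf{E}(bb_{i})c_{i}=b$ is exactly the dual-basis equation exhibiting $AB1_{\Sscript{x}}$ as finitely generated and projective over $A$ with projectivity datum $\big(c_{i},\,b\mapsto\Sf{E}(bb_{i})\big)_{i}$, while the second identity $b'=\sum_{i}b_{i}\Sf{E}(c_{i}b')$ supplies the inverse to the natural comparison map $B1_{\Sscript{u}}\to B\hom{A-}{AB}{A1_{\Sscript{u}}}$ induced by $E$, thereby yielding the required bimodule isomorphism. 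Conversely, one reads $E$ off this isomorphism and extracts the family $\{(b_{i},c_{i})\}$ from a projective dual basis of $AB1_{\Sscript{x}}$.

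I expect the main obstacle to be the careful bookkeeping in the unit-free setting: verifying that the functor-level isomorphisms of the first step really localize to the fibrewise statements of~$(iii)$, and that the pointwise finiteness condition (each $AB1_{\Sscript{x}}$ finitely generated projective) is the correct replacement for global finite generation. Matching the indexing of $\Sf{E}$ — defined only on the $\phio$-image hom-sets — with a genuine $A$-bimodule map $B\to A$, and checking that the two dual-basis identities simultaneously encode projectivity and the inverse isomorphism, will require the explicit descriptions of the adjunctions furnished by Lemmas~\ref{lema:Induction} and~\ref{lema:Co-Induction} rather than the abstract uniqueness of adjoints alone.
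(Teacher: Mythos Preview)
Your approach is correct and leads to the same theorem, but it is organized quite differently from the paper. The paper argues cyclically, $(i)\Rightarrow(ii)\Rightarrow(iii)\Rightarrow(i)$, and stays almost entirely inside the representation categories: for $(i)\Rightarrow(ii)$ it evaluates the isomorphism $\pls\cong\upphi^{*}$ on the representables $\cH_{u}$, identifies $\pls(\cH_{u})_{x}\cong\Bbbk\cG(\phio(u),x)$ by an explicit colimit computation (Lemma~\ref{lema:I-II-1}), and then invokes a general representability lemma (Lemma~\ref{lema:I-II-2}) to extract $\Sf{E}$ and the dual-basis elements; the step $(iii)\Rightarrow(i)$ rebuilds the natural isomorphism fibre by fibre via the comparison~\eqref{Eq:Calentaminetoglobal}. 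You instead transport everything to $\lmod{A}$ and $\lmod{B}$ at the outset, identify $\upphi^{*}$ and $\pls$ with (the unital versions of) $\hom{A-}{AB}{-}$ and $B\tensor{A}-$ by uniqueness of adjoints, and then prove $(i)\Leftrightarrow(iii)$ and $(ii)\Leftrightarrow(iii)$ in the style of the classical Frobenius-extension theory. Your route is shorter and more conceptual, and makes the link with Kadison's notion of Frobenius extension immediate; the paper's route is more self-contained and keeps the groupoid combinatorics explicit, at the cost of the two auxiliary lemmas and a longer calculation for $(i)\Rightarrow(ii)$. The one place your sketch needs tightening is exactly the one you anticipate: in the non-unital setting the right adjoint is $B\hom{A-}{AB}{-}$ rather than $\hom{A-}{B}{-}$, and the identification of $\Sf{E}$ with an $A$-bimodule map really has domain $ABA$ rather than all of $B$; the passage from the functorial isomorphism $B\tensor{A}-\cong\hom{A-}{AB}{-}$ to the fibrewise statements of~$(iii)$ therefore requires the idempotent-by-idempotent bookkeeping you allude to, together with the fact that $\{A1_{u}\}_{u\in\Ho}$ is a generating family so that a natural transformation between colimit-preserving functors is determined on these objects.
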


The proof of this theorem will be done in several steps, following the path: $(i) \Rightarrow (ii) \Rightarrow (iii) \Rightarrow (i)$.

\subsection[The proof of $(i) \Rightarrow (ii)$ in Theorem~\ref{thm:FM}]{The proof of $\boldsymbol{(i) \Rightarrow (ii)}$ in Theorem~\ref{thm:FM}}\label{sssec:I-II}
Assume that there is a natural isomorphism ${}^{*}\upphi \cong \upphi^{*}$. This in particular implies that, for every $x \in \Go$, there is a natural isomorphism
\begin{gather}\label{Eq:I-II}
{\rm Nat}(\upphi^{\Sscript{x}}, \tilde{(-)}) \cong \injlimit{\cH^{\Sscript{\upphi, x}}}{\vV^{\Sscript{\upphi, x}}_*(-)}.
\end{gather}
Now, for a given object $u \in \Ho$, let us denote by $\cH_{u}\colon \cH \to \Vect$ the $\cH$-representation given over objects by $v \to \Bbbk \cH(u, v)$ and obviously defined over arrows. The natural isomorphism of~\eqref{Eq:I-II}, leads then to a family of bijections
\begin{gather*}
{\rm Nat}\big(\upphi^{\Sscript{x}}, \tilde{\cH_{u}}\big) \cong \injlimit{\cH^{\Sscript{\upphi, x}}}{\vV^{\Sscript{\upphi, x}}_*(\cH_{u})},
\end{gather*}
which is clearly natural in $u \in \Ho$. In the previous situation, we have
\begin{Lemma}\label{lema:I-II-1} For any $x \in\Go$ and $u \in \Ho$, there is an isomorphism
\begin{gather*}
\injlimit{\cH^{\Sscript{\upphi, x}}}{\vV^{\Sscript{\upphi,x}}_*(\cH_{u})} \cong \Bbbk \cG(\phio(u),x),
\end{gather*}
which is natural in both components $(u, x) \in \cH^{\Sscript{\rm op}} \times \cG$.
\end{Lemma}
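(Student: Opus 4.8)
The plan is to compute the inductive limit directly, by exhibiting a cocone over the translation groupoid $\cH^{\Sscript{\upphi,x}}$ with apex $\Bbbk\cG(\phio(u),x)$ and then producing an explicit inverse. Conceptually, the functor $\vV^{\Sscript{\upphi,x}}_*(\cH_u)$ is the free $\Bbbk$-vector space functor applied to the $\Sets$-valued functor $(a,v)\mapsto\cH(u,v)$ (on arrows $p\mapsto hp$), and since $\Bbbk(-)$ is a left adjoint it preserves colimits; this already predicts that the answer is the free vector space on a suitable set of arrows of $\cG$. I will, however, work directly with the structural maps $\upupsilon^{\Sscript{x}}_{\Sscript{(a,v)}}$ of the limit \eqref{Eq:SPx}. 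The key book-keeping fact is that an arrow $\big((a,v),h\big)$ of $\cH^{\Sscript{\upphi,x}}$ runs from $(a\phia(h),s(h))$ to $(a,v)$ (with $t(h)=v$), and that $\vV^{\Sscript{\upphi,x}}_*(\cH_u)$ sends it to the transition map $\cH_u(h)\colon p\mapsto hp$.

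First I would define, for each object $(a,v)\in\varsigma^{-1}(\{x\})$, the linear map $\theta_{\Sscript{(a,v)}}\colon \Bbbk\cH(u,v)\to\Bbbk\cG(\phio(u),x)$ by $q\mapsto a\phia(q)$, which makes sense since $q\colon u\to v$ gives $\phia(q)\colon\phio(u)\to\phio(v)$ and $a\colon\phio(v)\to x$. These form a cocone: along the arrow above one has $\theta_{\Sscript{(a,v)}}(hp)=a\phia(h)\phia(p)=\theta_{\Sscript{(a\phia(h),s(h))}}(p)$, so by the universal property of \eqref{Eq:SPx} they induce a map $\Theta_{\Sscript{x}}\colon \pls(\cH_u)_{\Sscript{x}}\to\Bbbk\cG(\phio(u),x)$ with $\Theta_{\Sscript{x}}\big(\upupsilon^{\Sscript{x}}_{\Sscript{(a,v)}}(q)\big)=a\phia(q)$. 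For the inverse I would set $\Xi_{\Sscript{x}}(b)=\upupsilon^{\Sscript{x}}_{\Sscript{(b,u)}}(\iota_u)$ for $b\in\cG(\phio(u),x)$, noting $(b,u)\in\varsigma^{-1}(\{x\})$. Then $\Theta_{\Sscript{x}}\circ\Xi_{\Sscript{x}}(b)=b\phia(\iota_u)=b$, while $\Xi_{\Sscript{x}}\circ\Theta_{\Sscript{x}}\big(\upupsilon^{\Sscript{x}}_{\Sscript{(a,v)}}(q)\big)=\upupsilon^{\Sscript{x}}_{\Sscript{(a\phia(q),u)}}(\iota_u)=\upupsilon^{\Sscript{x}}_{\Sscript{(a,v)}}(q)$, the last equality being precisely the cocone relation for the arrow $\big((a,v),q\big)\colon (a\phia(q),u)\to(a,v)$ evaluated at $\iota_u$. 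Hence $\Theta_{\Sscript{x}}$ is the claimed isomorphism.

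Finally I would check naturality in $(u,x)\in\cH^{\Sscript{\rm op}}\times\cG$. For the variable $x$: given $g\in\Ga$, the description of $\pls(\cH_u)^{\Sscript{g}}$ in \eqref{Eq:lphiup} (the groupoid isomorphism $(a,v)\mapsto(ga,v)$ with $\zeta$ acting by the identity) gives $\upupsilon^{\Sscript{s(g)}}_{\Sscript{(a,v)}}(q)\mapsto\upupsilon^{\Sscript{t(g)}}_{\Sscript{(ga,v)}}(q)$, which $\Theta$ carries to $(ga)\phia(q)=g\big(a\phia(q)\big)$, matching post-composition $b\mapsto gb$ on $\Bbbk\cG(\phio(u),-)$. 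For the variable $u$: a morphism $k\colon u'\to u$ in $\cH$ induces by pre-composition a morphism $\cH_u\to\cH_{u'}$ of $\cH$-representations, and \eqref{Eq:pf} shows $\pls$ sends it to $\upupsilon^{\Sscript{x}}_{\Sscript{(a,v)}}(q)\mapsto\upupsilon^{\Sscript{x}}_{\Sscript{(a,v)}}(qk)$; under $\Theta$ this becomes $a\phia(q)\phia(k)$, matching pre-composition $b\mapsto b\phia(k)$ induced by $\phia(k)\colon\phio(u')\to\phio(u)$. I expect the main obstacle to be entirely one of book-keeping: correctly reading off the sources, targets and transition maps of $\cH^{\Sscript{\upphi,x}}$ from the right $\cH$-action on the fibre $\varsigma^{-1}(\{x\})$, so that the single cocone relation and the two naturality squares are oriented consistently; once these conventions are pinned down, every verification reduces to the functoriality of $\phia$ and the associativity of composition in $\cG$.
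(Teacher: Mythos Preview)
Your proof is correct and follows essentially the same approach as the paper: you build the very same cocone $\theta_{\Sscript{(a,v)}}(q)=a\phia(q)$ (the paper calls it $\tau_{\Sscript{(b,v)}}$) and use the same key element $\upupsilon^{\Sscript{x}}_{\Sscript{(b,u)}}(\iota_u)$ for the inverse direction. The only cosmetic difference is that the paper verifies universality directly by factoring an arbitrary cone through $b\mapsto\xi^{\Sscript{x}}_{\Sscript{(b,u)}}(1_u)$, whereas you pass through the universal map and exhibit an explicit two-sided inverse; your version has the minor advantage of also spelling out the naturality in $(u,x)$, which the paper leaves to the reader.
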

\begin{proof}Fix for the moment $x$ and $u$ as in the statement. We need to check that the vector space $\Bbbk \cG(\phio(u), x)$ is the inductive limit of the inductive system
\begin{gather*}
\big\{ \vV^{\Sscript{\upphi, x}}_*(\cH_{u})_{\Sscript{(b, v)}}=\Bbbk \cH(v,u) \big\}_{\Sscript{(b, v) \in \varsigma^{-1}(\{x\})}}.
\end{gather*}
Let us first define an inductive cone over $\Bbbk \cG(\phio(u), x)$. So take $(b,v) \in \varsigma^{-1}(\{x\})$, that is, $b \in \cG(\phio(v), x)$, then we have a linear map
\begin{gather*}
\tau_{\Sscript{(b, v)}}\colon \ \Bbbk \cH(u,v) \longrightarrow \Bbbk\cG(\phio(u), x),\qquad \bigg( \sum_{i}\lambda_{i} a_{i} \longmapsto \sum_{i}\lambda_{i} b\phia(a_{i})\bigg),
\end{gather*}
which is clearly compatible with the arrows of the groupoid $\cH^{\Sscript{\upphi, x}}$, and this gives us the desired inductive cone. We need then to check that this is an initial object among all others cones. So given an arbitrary cone $\big\{\xi^{\Sscript{x}}_{\Sscript{(b, v)}}\colon \Bbbk\cH(u,v) \to V\big\}_{\Sscript{(b, v)} \in \varsigma^{-1}(\{x\}) }$, for any $b \in \cG(\phio(u), x)$, we can consider the vector $\xi^{\Sscript{x}}_{\Sscript{(b, u)}}(1_{u}) \in V$; whence we have a linear map
\begin{gather*}
\xi_{ }\colon \ \Bbbk \cG(\phio(u),x) \longrightarrow V, \qquad \big( b \longmapsto \xi^{\Sscript{x}}_{\Sscript{(b, u)}}(1_{u}) \big).
\end{gather*}
It turns out that this is a morphism of inductive cones and this finishes the proof of the lem\-ma.
\end{proof}

By Lemma \ref{lema:I-II-1}, the natural isomorphisms of equation~\eqref{Eq:I-II}, lead to a natural isomorphism
\begin{gather*}
{\rm Nat}\big(\upphi^{\Sscript{x}}, \tilde{\cH_{u}}\big) \cong \Bbbk \cG(\phio(u),x),
\end{gather*}
for every $x \in \Go$ and $u \in \Ho$. The following general lemma characterizes these kind of natural transformations.

\begin{Lemma}\label{lema:I-II-2}Let $F\colon \mathcal{C} \to \cD$ be a covariant functor between small categories. Then there is a~natural isomorphism
\begin{gather*}
{\rm Nat}\big[ \cD(F(-),+), {\rm Nat} \big( \cD(+, F(\star)), \Bbbk \mathcal{C}(-,\star) \big) \big] \cong {\rm Nat}\big( \cD(F(\dag),F(\ddag)), \Bbbk \mathcal{C}(\dag,\ddag) \big),
\end{gather*}
where the left hand side term stands for the set of all natural transformations of the form:
\begin{gather*}
\zeta_{(c, d)}\colon \ \cD(F(c),d) \longrightarrow{\rm Nat} \big( \cD(d, F(\star)), \Bbbk \mathcal{C}(c,\star) \big), \qquad (c,d) \in \mathcal{C}^{\Sscript{\rm op}} \times \cD,
\end{gather*}
and the right hand side term is the set of all natural transformations of the form:
\begin{gather*}
\cE_{(c', c'')} \colon \ \cD(F(c'), F(c'')) \longrightarrow \Bbbk \mathcal{C}(c',c''), \qquad (c',c'') \in \mathcal{C}^{\Sscript{\rm op}} \times \mathcal{C}.
\end{gather*}
\end{Lemma}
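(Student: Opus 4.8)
The plan is to recognize this statement as an instance of the Yoneda lemma, with the functor $F$ serving only to transport objects of $\mathcal{C}$ into $\cD$, and to exhibit an explicit pair of mutually inverse $\Bbbk$-linear maps between the two sides. Writing $\zeta$ for a typical element of the left-hand side (so that $\zeta_{(c,d)}(p)$ is, for each $p\in\cD(F(c),d)$, a natural transformation with components $\zeta_{(c,d)}(p)_{c'}\colon \cD(d,F(c'))\to\Bbbk\mathcal{C}(c,c')$) and $\cE$ for a typical element of the right-hand side, I would define the forward map $\Theta\colon \cE\mapsto\zeta$ by
\begin{gather*}
\zeta_{(c,d)}(p)_{c'}(q) := \cE_{(c,c')}(q\circ p), \qquad p\in\cD(F(c),d),\quad q\in\cD(d,F(c')),
\end{gather*}
and the backward map $\Xi\colon\zeta\mapsto\cE$ by evaluating at $d=F(c')$ on an identity arrow,
\begin{gather*}
\cE_{(c,c')}(f) := \zeta_{(c,F(c'))}(f)_{c'}(\mathrm{id}_{F(c')}), \qquad f\in\cD(F(c),F(c')).
\end{gather*}

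The first block of work is to confirm that these assignments land in the intended sets. For $\Theta$, naturality of $\zeta_{(c,d)}(p)$ in $c'$ reduces to naturality of $\cE$ in its second argument, and naturality of the whole family $\zeta$ in $(c,d)\in\mathcal{C}^{\rm op}\times\cD$ follows from naturality of $\cE$ together with associativity of composition; these are routine square-chasing verifications. For $\Xi$, the naturality of $\cE$ in $(c,c')\in\mathcal{C}^{\rm op}\times\mathcal{C}$ is obtained by feeding the two separate naturality conditions on $\zeta$ (inner naturality in the $\mathcal{C}$-variable and outer naturality in the $\cD$-variable) into the defining formula and tracking how $\mathrm{id}_{F(c')}$ is moved by the arrows $F(\gamma)$.

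It then remains to check that $\Theta$ and $\Xi$ are mutually inverse. One composite is immediate, since unwinding definitions gives $\Xi(\Theta(\cE))_{(c,c')}(f)=\cE_{(c,c')}(\mathrm{id}_{F(c')}\circ f)=\cE_{(c,c')}(f)$. The other composite is the heart of the argument: one computes
\begin{gather*}
\Theta(\Xi(\zeta))_{(c,d)}(p)_{c'}(q) = \zeta_{(c,F(c'))}(q\circ p)_{c'}(\mathrm{id}_{F(c')}),
\end{gather*}
and then applies the naturality of $\zeta$ in the $\cD$-variable along the arrow $q\colon d\to F(c')$. That naturality square, once the variances are correctly set up, reads $\zeta_{(c,F(c'))}(q\circ p)_{c'}(r)=\zeta_{(c,d)}(p)_{c'}(r\circ q)$ for $r\colon F(c')\to F(c')$, and taking $r=\mathrm{id}_{F(c')}$ returns exactly $\zeta_{(c,d)}(p)_{c'}(q)$. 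This is precisely the Yoneda reduction that collapses the mediating variable $d$.

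I expect the main obstacle to be purely notational: keeping straight the mixed variances of the three hom-bifunctors $\cD(F(-),+)$, $\cD(+,F(\star))$ and $\Bbbk\mathcal{C}(-,\star)$ so that the single naturality identity above can be invoked, and organizing the standard principle that a transformation on a product category $\mathcal{C}^{\rm op}\times\mathcal{C}$ is natural jointly iff it is natural in each factor separately (used implicitly when passing from $\Xi$ back to $\Theta$). There is no genuine analytic or algebraic difficulty: once the bookkeeping is fixed, every verification is a one-step consequence of the Yoneda lemma, and since $\Theta$ and $\Xi$ are manifestly $\Bbbk$-linear the resulting bijection is automatically an isomorphism of $\Bbbk$-vector spaces.
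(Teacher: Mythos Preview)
Your proposal is correct and matches the paper's approach exactly: the paper's maps $\Omega$ and $\Gamma$ are precisely your $\Xi$ and $\Theta$, with the same evaluation-at-identity and compose-then-apply formulas. The paper merely states these two maps and writes ``the rest of the proof is left to the reader'', so your verifications of naturality and of the two composites (including the Yoneda-style naturality reduction for $\Theta\circ\Xi$) actually fill in what the paper omits.
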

\begin{proof}
The stated natural isomorphism is given by the following isomorphism:
\begin{gather*}
\xymatrix@R=0pt{ {\rm Nat}\big[ \cD(F(-),+), {\rm Nat} \big( \cD(+, F(\star)), \Bbbk \mathcal{C}(-,\star) \big) \big] \ar@<0.90ex>@{->}^-{\Omega}[rr] & & \ar@<0.90ex>@{->}^-{\Gamma}[ll] {\rm Nat}\big( \cD(F(\dag),F(\ddag)), \Bbbk \mathcal{C}(\dag,\ddag) \big),}
\end{gather*}
where, for a given natural transformation $\zeta$ in the domain of $\Omega$, we have
\begin{gather*}
\Omega(\zeta)_{(c',c'')} \colon \ \cD(F(c'),F(c'')) \longrightarrow \Bbbk \mathcal{C}(c',c''), \qquad \big( p \longmapsto \zeta_{(c', F(c''))}(p)_{c''}(1_{F(c'')}) \big).
\end{gather*}
As for a natural transformation $\cE$ in the domain of $\Gamma$, we have that
\begin{gather*}
\Gamma(\cE)_{(c, d)}\colon \ \cD(F(c),d) \longrightarrow {\rm Nat} \big( \cD(d, F(\star)), \Bbbk \mathcal{C}(c,\star) \big), \qquad \big( q \longmapsto \Gamma(\cE)_{(c, d)}(q) \big),
\end{gather*}
which assigns to every object $c' \in \mathcal{C}$, the map
\begin{gather*}
\Gamma(\cE)_{(c, d)}(q) { }_{c'}\colon \ \cD(d,F(c')) \longrightarrow \Bbbk \mathcal{C}(c,c'), \qquad \big[ r \mapsto \cE_{(c, c')}(rq) \big].
\end{gather*}
The rest of the proof is left to the reader.
\end{proof}

Applying Lemma \ref{lema:I-II-2} to our case, we know that any natural transformation
\begin{gather*}
\Theta_{\Sscript{(u,x)}}\colon \ \cG(\phio(u),x) \longrightarrow {\rm Nat}\big( \upphi^{\Sscript{x}}, \tilde{\cH_{u}}\big)
\end{gather*}
gives rise to a natural transformation
\begin{gather}\label{Eq:E}
{\sf{E}}_{\Sscript{(v,w)}}\colon \ \cG\big(\phio(v),\phio(w)\big) \longrightarrow \Bbbk\cH(v,w), \qquad \big( b \longmapsto \Theta_{\Sscript{(v,\phio(w))}}(b){}_{w}(1_{\phio(w)}) \big).
\end{gather}

On the other hand, the left hand functor in the isomorphism of equation~\eqref{Eq:I-II}, that is, the functor ${\rm Nat}\big(\upphi^{\Sscript{x}}, \tilde{(-)}\big)$ should preserves colimits, since the involves categories are Grothendieck ones with a set of small projective generators. Namely, in the case of $\Rep{\cH}$ this set of generators, is given by the family of representations $\{ \cH_{\Sscript{u}}\}_{\Sscript{u \in \Ho}}$. In this way, saying that ${\rm Nat}\big(\upphi^{\Sscript{x}}, \tilde{(-)}\big)$ preserves colimits, is equivalent to say that the $\cH$-representation $\Bbbk \upphi^{\Sscript{x}}\colon \cH \to \Vect$, $u \to \Bbbk \upphi^{\Sscript{x}}_{\Sscript{u}}=\Bbbk\cG(x,\phio(u))$, is finitely generated and projective. Therefore, we are assuming that there exists a positive integer $N \geq 1$ and a split monomorphism $\Bbbk \upphi^{\Sscript{x}} \hookrightarrow \oplus_{i =1,\dots, N} \cH_{\Sscript{u_{i}}}$. Hence, there is a set $\{ c_{i}\}_{\Sscript{i =1,\dots, N}}$ such that each of the $c_{i}$'s belong to $\Bbbk \cG(x, \phio(u_{i}))$, and natural transformations $\varphi{}^{i}_{-}\colon \Bbbk \cG(x,\phio(-)) \to \Bbbk \cH(u_{i},-)$, such that, for every $b \in \cG(x,\phio(u))$, we have that
\begin{gather*}
\sum_{i} \phi\big( \varphi^{i}_{u}(b) \big) c_{i} = b,
\end{gather*}
equality in the vector space $ \Bbbk\cG(x,\phio(u))$. In this direction, one can consider the family of elements $\{b_{i}\}_{\Sscript{i= 1,\dots, N}}$ where each $b_{i} = \Theta_{(u_{i}, x)}^{-1}\big( \varphi^{i}_{-}\big) \in \cG(\phio(u_{i}),x)$. Using the natural transformations~$\Sf{E}$ of~\eqref{Eq:E} together with the properties which $\Theta$ satisfies, we obtain
\begin{gather*}
\sum_{i} \Sf{E}_{\Sscript{(u_{i}, u)}} \big( bb_{i}\big) c_{i} = b,
\end{gather*}
for every $b$ as above. Take an element $b' \in \cG(\phio(u), x)$, then for every $c \in \cG(x,\phio(v))$ with $v \in \Ho$, we have that
\begin{align*}
\Theta_{\Sscript{(u, x)}} \bigg( \sum_{i} b_{i} \Sf{E}_{\Sscript{(u, u_{i})}} \big( c_{i} b'\big)\bigg)_{\Sscript{v}}(c) &= \sum_{i} \Theta_{\Sscript{(u, x
)}}(b_{i})_{\Sscript{v}}(c) \Sf{E}_{\Sscript{(u, u_{i})}} \big(c_{i} b'\big) \\
 &= \sum_{i} \Theta_{\Sscript{(u, x )}}\big(\Theta_{(u_{i}, x)}^{-1}\big( \varphi^{i}_{-}\big)\big)_{\Sscript{v}}(c) \Sf{E}_{\Sscript{(u, u_{i})}} \big( c_{i} b'\big) \\
&= \sum_{i} \varphi^{i}_{\Sscript{v}}(c) \Sf{E}_{\Sscript{(u,u_{i})}} \big( c_{i} b'\big) = \sum_{i}\Sf{E}_{\Sscript{(u, v)}}\big( \varphi^{i}_{\Sscript{v}}(c) c_{i}b'\big)\\
&= \Sf{E}_{\Sscript{(u, v)}} \bigg( \sum_{i}\varphi^{i}_{\Sscript{v}}(c) c_{i} b'\bigg) =\Sf{E}_{\Sscript{(u, v)}} \big( cb'\big) \\
&=\Theta_{\Sscript{(u,\upphi(v))}}(cb'){}_{v}(1_{\upphi(v)})= \Theta_{\Sscript{(u,x)}}(b'){}_{v}(c).
\end{align*}
Thus, $\Theta_{\Sscript{(u, x)}} \Big( \sum_{i} b_{i} \Sf{E}_{\Sscript{(u, u_{i})}} \big( c_{i} b'\big) \Big) = \Theta_{\Sscript{(u,x)}}(b')$ and so $\sum_{i} b_{i} \Sf{E}_{\Sscript{(u, u_{i})}} \big( c_{i} b'\big) = b'$, which completes the proof of $(ii)$. \QEDA

\subsection[The proof of $(ii) \Rightarrow (iii)$ in Theorem \ref{thm:FM}]{The proof of $\boldsymbol{(ii) \Rightarrow (iii)}$ in Theorem \ref{thm:FM}}\label{sssec:II-III}
For every $x \in \Go$, it is clear that we have
\begin{gather*}
AB1_{\Sscript{x}}=\underset{u \in \cH}{\oplus} \Bbbk \cG(x, \phio(u)).
\end{gather*}
Define left $A$-linear maps ${}^{*}e_{i}\colon AB1_{\Sscript{x}} \to A1_{\Sscript{u_{i}}}$ by sending $ b \in \cG(x,\phio(u)) \mapsto \Sf{E}_{\Sscript{(u_{i}, u)}}(bb_{i})$. Then, by hypothesis, the set $\{{}^{*}e_{i}, c_{i}\}_{\Sscript{i=1, \dots, N}}$ is a dual basis for the left $A$-module $AB1_{\Sscript{x}}$. Thus each of the modules $AB1_{\Sscript{x}}$ is finitely generated and projective, which is the first statement of part $(iii)$.

Now fixing $u \in \Ho$, we have a well defined left $B$-linear map
\begin{gather*}
\Psi_{u}\colon \ B1_{u} \longrightarrow B\hom{A-}{AB}{A1_{u}}, \qquad \big( b1_{u} \longmapsto \big[ ab' \mapsto \Sf{E}(ab'b) 1_{u} \big] \big),
\end{gather*}
which, by the naturality of $\Sf{E}$, is also natural. The maps $\Psi$ are bijective as they have inverses given by: $\mho\colon B\hom{A-}{AB}{A1_{u}} \to B1_{u}$ sending $\alpha \mapsto \sum_{i}b_{i}\alpha(c_{i})$. Indeed, for every $a \in A$ and $b' \in B$ with $ab' \in AB1_{u}$, we have
\begin{align*}
\Psi\mho(\alpha) (ab') &= \Psi\bigg( \sum_{i} b_{i}\alpha(c_{i})\bigg) (ab') \\
&= \sum_{i} \Sf{E}\big( ab'b_{i}\alpha(c_{i})\big) = \sum_{i} a \Sf{E}\big(b'b_{i}\big)\alpha(c_{i}) = \sum_{i} a \alpha\big( \Sf{E}\big( b'b_{i}\big)c_{i}\big) = \alpha(ab'),
\end{align*}
and so $\Psi\mho={\rm id}$. In the other way around, we have that $\mho \Psi(b1_{u}) = \sum_{i}b_{i}\Psi(b1_{u})(c_{i}) = \sum_{i} b_{i}\Sf{E}(c_{i}b) = b$. This gives the desired isomorphism and finishes the proof of part $(iii)$. \QEDA

\subsection[The proof of $(iii) \Rightarrow (i)$ in Theorem \ref{thm:FM}]{The proof of $\boldsymbol{(iii) \Rightarrow (i)}$ in Theorem \ref{thm:FM}}\label{sssec:III-IV}
To show this implication, it is sufficient to check, from one hand, that each of the functors ${\rm Nat}\big(\upphi^{\Sscript{x}}, \tilde{(-)}\big)\colon \Rep{\cH} \to \Vect$, with $x \in \Go$, preserves colimits, and from another one, there is a natural isomorphism $\Gamma_{\Sscript{(u, x)}}\colon \cG(\phio(u), x) \cong {\rm Nat}(\upphi^{\Sscript{x}}, \tilde{\cH_{u}})$, for ever pair $(u, x) \in \Ho \times \Go$.

The fact that ${\rm Nat}\big(\upphi^{\Sscript{x}}, \tilde{(-)}\big)$ preserves colimits, is deduced by using the first statement of $(iii)$ and the natural isomorphism
\begin{gather}\label{Eq:Calentaminetoglobal}
\hom{A-}{AB1_{x}}{M} \longrightarrow {\rm Nat}\big(\upphi^{\Sscript{x}}, \tilde{\oO(M)}\big),\qquad \big( f \longmapsto \big[ \upphi^{\Sscript{x}}_{\Sscript{v}} \to \tilde{\oO(M)}_{\Sscript{v}}, \big( b \mapsto f(b)
\big)\big]_{\Sscript{v \in \Ho}} \big),\!\!\!
\end{gather}
where $\oO\colon \lmod{A} \to \Rep{\cH}$ is the inverse functor of the following (symmetric monoidal) isomorphism of categories
\begin{gather*}
\oO^{-1}\colon \ \Rep{\cH} \longrightarrow \lmod{A}, \qquad \big( \cW \longrightarrow \oplus_{u \in \Ho} \Wu \big),
\end{gather*}
where $\lmod{A}$ denotes the category of unital left $A$-modules. In particular, this induces a natural isomorphism
\begin{gather*}
\hom{A-}{AB1_{x}}{A1_{u}} \cong {\rm Nat}\big(\upphi^{\Sscript{x}}, \tilde{\cH_{\Sscript{u}}}\big).
\end{gather*}

As for the second condition, if we assume that there is a left $B$-linear natural isomorphism $\Phi_{u}\colon B1_{u} \cong B\hom{A-}{AB}{A1_{u}}$, for every $u \in \Ho$, then we can consider $\Gamma_{\Sscript{(u, x)}}\colon \cG(\phio(u),x) \to {\rm Nat}\big(\upphi^{\Sscript{x}}, \tilde{\cH_{u}}\big)$ to be defined by
\begin{gather*}
\Gamma_{\Sscript{(u, x)}}\colon \ \cG(\phio(u), x) \longrightarrow {\rm Nat}\big(\upphi^{\Sscript{x}}, \tilde{\cH_{u}}\big), \\
\hphantom{\Gamma_{\Sscript{(u, x)}}\colon}{} \ \Big( b \longmapsto \big[ 1_{x}\Phi_{u}(b)\colon \upphi^{\Sscript{x}} _{\Sscript{v}} \to \Bbbk \cH(u,v), \;\big( b' \mapsto
\Phi_{u}(b)(b'1_{x})\big) \big]_{\Sscript{v \in \Ho} } \Big).
\end{gather*}
The fact that $\Gamma_{\Sscript{(u, x)}} \in {\rm Nat}\big(\upphi^{\Sscript{x}}, \tilde{\cH_{u}}\big)$, follows directly from the fact that $\Phi_{u}(b)$ is a left $A$-linear for every $b \in \cG(\phio(u),x)$. The naturality of $\Gamma$, that is, the commutativity of the diagrams of the form
\begin{gather*}
\xymatrix@C=45pt{ \cG(\phio(u), x) \ar@{->}[rr] \ar@{->}[d] & & {\rm Nat}\big(\upphi^{\Sscript{x}}, \tilde{\cH_{u}}\big) \ar@{->}[d] \\ \cG(\phio(u'), x') \ar@{->}[rr] & & {\rm Nat}\big(\upphi^{\Sscript{x'}},
\tilde{\cH_{u'}}\big) }
\end{gather*}
for every pairs $ a \in \cH(u',u)$ and $g\in \cG(x,x')$, is computed as follows. Take an element $b \in \cG(\phio(u), x)$, then, for every object $v \in \Ho$ and $ b' \in
\upphi^{\Sscript{x'}}_{\Sscript{v}}=\cG(x',\phio(v))$, we have
\begin{gather*}
{\rm Nat}\big(\upphi^{\Sscript{g}}, \tilde{\cH_{a}}\big) \circ \Gamma_{\Sscript{(u, x)}}(b)_{v}(b') = \Gamma_{\Sscript{(u, x)}}(b)_{v}(b'g) a = \Phi_{u}(b)(b'g) a = \Phi_{u'}(ba1_{u'})(b'g).
\end{gather*}
On the other hand, we have
\begin{gather*}
\Gamma_{\Sscript{(u', x')}}(gb\phi(a))_{v}(b') = \Phi_{u'}(gb\phi(a))(b') = \Phi_{u'}(b\phi(a))(b'g).
\end{gather*}
Comparing the two computations shows the commutativity of that diagram. Lastly, the inverse of $\Gamma_{\Sscript{(u, x)}}$ is provided by that of $1_{x}\Phi_{u}$ combined with the inverse of the natural isomorphism of equation~\eqref{Eq:Calentaminetoglobal}. This completes the proof of Theorem~\ref{thm:FM}.
\QEDA

\smallskip

The notion of a \emph{finite groupoid} is vague, in the sense that there are several interrelated notions and all generalize that of a finite group. Next we adopt the following one: a groupoid $\cG$ is said to be \emph{finite}, provided that $\pi_{0}(\cG)$ is a finite set as well as each of its isotropy groups $\cG^{\Sscript{x}}$ when $x$ runs in $\Go$. This is the case when for instance~$\Ga$ is a finite set. Given a morphism of groupoids $\upphi\colon \cH \to \cG$, we shall implicitly use the notations of Examples~\ref{exam: HG} and~\ref{exam:bisets}.

The following corollary characterizes Frobenius extension by subgroupoids, in terms of finiteness of the orbits of the fibres of the pull-back biset, which in particular applies to the case of
finite groupoids.

\begin{Corollary}\label{coro:Finite}Let $\upphi\colon \cH \to \cG$ be a morphism of groupoids such that $\upphi$ is a faithful functor and $\upphi_{\Sscript{0}}\colon \Ho \to \Go$ is an injective map $($e.g.,~$\phia\colon \Ha \to \Ga$ is an injective map$)$. Then the following are equivalent:
\begin{enumerate}[$(a)$]\itemsep=0pt
\item $\upphi$ is a Frobenius extension;
\item for any $x \in \Go$, the left $\cH$-set $\vartheta^{-1}\big( \{x\} \big)$ has finitely many orbits;
\item for any $x \in \Go$, the right $\cH$-set $\varsigma^{-1}\big( \{x\} \big)$ has finitely many orbits.
\end{enumerate}
In particular, any inclusion of finite groupoids is a Frobenius
extension.
\end{Corollary}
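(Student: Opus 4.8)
The plan is to deduce everything from the main Theorem~\ref{thm:FM}, using the extra faithfulness hypothesis to make the abstract conditions $(ii)$ and $(iii)$ concrete. First I would record that under the stated hypotheses the arrow map $\phia\colon \Ha \to \Ga$ is injective on all of $\Ha$: if $\phia(h)=\phia(h')$, then comparing sources and targets and using injectivity of $\phio$ forces $h,h'$ into the same hom-set $\cH(s(h),t(h))$, where faithfulness gives $h=h'$. This global injectivity lets me single out a \emph{canonical} natural transformation to feed into Theorem~\ref{thm:FM}$(ii)$: define $\Sf{E}_{\Sscript{(u,v)}}\colon \cG(\phio(u),\phio(v)) \to \Bbbk\cH(u,v)$ by $\Sf{E}(b)=h$ when $b=\phia(h)$ for the unique $h\in\cH(u,v)$, and $\Sf{E}(b)=0$ otherwise; naturality in $\cH^{\Sscript{\rm op}}\times\cH$ is immediate, since $\phia(h_2)\,b\,\phia(h_1)$ lies in the image of $\phia$ exactly when $b$ does. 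The equivalence $(b)\Leftrightarrow(c)$ I would dispatch separately and first, noting that the groupoid inversion $(u,a)\mapsto(a^{-1},u)$ gives a bijection $\vartheta^{-1}(\{x\})\to\varsigma^{-1}(\{x\})$ intertwining the left $\cH$-action by $h$ with the right $\cH$-action by $h^{-1}$ (this is exactly the opposite-set correspondence of Section~\ref{ssec:Grpd1}); hence the two bisets have equally many orbits.

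For $(c)\Rightarrow(a)$ I would verify condition $(ii)$ explicitly. Fixing $x\in\Go$, choose one representative $(b_i,u_i)$, $i=1,\dots,N$, for each of the finitely many orbits of the right $\cH$-set $\varsigma^{-1}(\{x\})$, so $b_i\in\cG(\phio(u_i),x)$, and set $c_i:=b_i^{-1}\in\cG(x,\phio(u_i))$. Given $b\in\cG(x,\phio(u))$, the element $(b^{-1},u)$ lies in a unique orbit, say that of $(b_i,u_i)$; since the action is free (stabilizers are trivial because $\phia$ is injective on isotropy groups) there is a unique $h\in\cH(u,u_i)$ with $b_i\phia(h)=b^{-1}$, whence $bb_i=\phia(h^{-1})$ and $\Sf{E}(bb_i)c_i=\phia(h^{-1})b_i^{-1}=b$, while $bb_j\notin\mathrm{im}\,\phia$ for $j\neq i$ (otherwise $(b_j,u_j)$ would fall into the orbit of $(b^{-1},u)$). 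This gives $\sum_i\Sf{E}(bb_i)c_i=b$, and the second identity $b'=\sum_i b_i\Sf{E}(c_ib')$ for $b'\in\cG(\phio(u),x)$ follows by the same argument applied to the orbit of $(b',u)$. Theorem~\ref{thm:FM} then yields that $\upphi$ is Frobenius.

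For $(a)\Rightarrow(c)$ I would run the first half of condition $(iii)$ backwards. Under the isomorphism $\oO^{-1}\colon\Rep{\cH}\to\lmod{A}$, the module $AB1_{x}=\bigoplus_{u\in\Ho}\Bbbk\cG(x,\phio(u))$ corresponds to the linearized $\cH$-set $\Bbbk\upphi^{\Sscript{x}}$, whose orbits match, via inversion, those of $\varsigma^{-1}(\{x\})$. Because $\phia$ is injective, the $\cH$-action on $\upphi^{\Sscript{x}}$ has trivial stabilizers, so $\Bbbk\upphi^{\Sscript{x}}$ is a direct sum of the projective generators $\cH_{\Sscript{u}}$, one per orbit; projectivity is therefore automatic and finite generation of $AB1_{x}$ is equivalent to finiteness of the number of orbits. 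Since $(a)$ forces $AB1_{x}$ to be finitely generated (Theorem~\ref{thm:FM}$(iii)$), $(c)$ follows. For the final assertion, if $\cH\hookrightarrow\cG$ is an inclusion of finite groupoids then each $\cG(x,\phio(u))$ is empty or a torsor under the finite group $\cG^{\Sscript{\phio(u)}}$, and $\pi_{\Sscript{0}}(\cH)$ is finite, so the free $\cH$-set $\upphi^{\Sscript{x}}$ has finitely many orbits and $(c)$ holds.

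The main obstacle is the step identifying Theorem~\ref{thm:FM}$(iii)$ with the orbit-counting condition $(c)$: the key point is that faithfulness is precisely what trivializes all stabilizers, turning $\Bbbk\upphi^{\Sscript{x}}$ into a \emph{free}, hence automatically projective, object of $\Rep{\cH}$, so that the a priori independent requirements of $(iii)$ (finite generation \emph{and} projectivity, together with the $B$-module isomorphism) collapse to the single combinatorial statement that there are finitely many orbits. Keeping careful track of the inversion that simultaneously relates the covariant functor $\upphi^{\Sscript{x}}$, the right biset $\varsigma^{-1}(\{x\})$ and the left biset $\vartheta^{-1}(\{x\})$ is what ties the three conditions together.
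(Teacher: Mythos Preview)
Your proposal is correct and follows essentially the same route as the paper: both arguments prove $(b)\Leftrightarrow(c)$ via the inversion bijection, deduce $(a)\Rightarrow(b)/(c)$ from the finite generation of $AB1_x$ in Theorem~\ref{thm:FM}$(iii)$ by decomposing $AB1_x$ as a direct sum over $\cH$-orbits, and establish $(c)\Rightarrow(a)$ by choosing orbit representatives and their inverses as the data $\{(b_i,u_i),c_i\}$ in Theorem~\ref{thm:FM}$(ii)$ with the same ``image-of-$\phia$-or-zero'' map $\Sf{E}$. Your direct definition of $\Sf{E}$ (using that faithfulness plus injectivity of $\phio$ forces $\phia$ to be globally injective) is exactly the projection $\fk{p}_{1,\phio(u)}$ the paper builds, written more transparently; the rest of the verification is the same computation.
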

\begin{proof}Let us first check the equivalence between conditions $(c)$ and $(b)$. To this aim, we recall from Section~\ref{ssec:Grpd1}, that the category of right groupoid-sets is isomorphic to left groupoid-sets (both categories are defined over the same groupoid). In particular the image, under this isomorphism, of the right $\cH$-set $\varsigma^{-1}\big( \{x\} \big)$ is $\varsigma^{-1}\big( \{x\} \big)^{\Sscript{o}}$ its opposite left $\cH$-set. As a consequence, there is a~bijection between the orbits set of $\varsigma^{-1}\big( \{x\} \big)$ and that of $\varsigma^{-1}\big( \{x\} \big)^{\Sscript{o}}$, which sends any orbit to its opposite.
Thus, $\varsigma^{-1}\big( \{x\} \big)$ and has finitely many right $\cH$-orbits if and only if $\varsigma^{-1}\big( \{x\} \big)^{\Sscript{o}}$ has finitely many left $\cH$-orbits. Therefore, conditions $(c)$ and $(b)$ are equivalent, since we know that there is an isomorphism of left $\cH$-sets:
\begin{gather}\label{Eq:TS}
\vartheta^{-1}\big( \{x\} \big) \longrightarrow \varsigma^{-1}\big( \{x\} \big)^{\Sscript{o}}, \qquad \big( (u,c) \longmapsto \big(c^{-1},u\big)^{\Sscript{o}} \big).
\end{gather}

The implication $(a) \Rightarrow (b)$ is derived as follows from the first condition of Theorem \ref{thm:FM}$(iii)$. For any $x \in \Go$, we know that
\begin{gather*}
AB1_{x}=\bigoplus_{u \in \Ho} \Bbbk \cG(x, \phio(u))= \Bbbk \vartheta^{-1}\big( \{x\}\big).
\end{gather*}
Therefore, as left $A$-module, $AB1_{x}$ can be decomposed as direct sum of cyclic unital $A$-sub\-mo\-du\-les of the form
\begin{gather*}
AB1_{x} = \bigoplus_{(u, q) \in {\rm rep}_{\cH}(\vartheta^{-1}( \{x\}))} A q,
\end{gather*}
where ${\rm rep}_{\cH}\big(\vartheta^{-1}\big( \{x\}\big)\big)$ is a set of representative classes modulo the left $\cH$-action on the fibre $\vartheta^{-1}\big( \{x\}\big)$, and $A q$ is the $\Bbbk$-vector space spanned by the orbit set ${\rm Orb}_{\Sscript{\cH}}^{\Sscript{l}}(u,q)$. Since $AB1_{x}$ is finitely generated and projective, this direct sum should be then finite, which means that ${\rm rep}_{\cH}\big(\vartheta^{-1}\big(
\{x\}\big)\big)$ is a finite set and this is precisely condition~$(b)$.

As for the implication $(b) \Rightarrow (a)$, we assume that $\vartheta^{-1}\big( \{x\}\big)$ has finitely many orbits, for any $x \in \Go$. Choose a finite set of representatives classes
$\{(u_{\Sscript{i, x}}, q_{\Sscript{i, x}})\}_{\Sscript{i=1,\dots, n_{x}}}$ such that $\vartheta^{-1}\big( \{x\}\big)$ decomposes as
\begin{gather*}
\vartheta^{-1}\big( \{x\}\big)= \biguplus_{i=1}^{n}{\rm Orb}_{\Sscript{\cH}}^{\Sscript{l}}(u_{i, x},q_{i, x}),
\end{gather*}
a disjoint union of left $\cH$-subsets, see~\cite{Kaoutit/Spinosa:2018}. Using the left $\cH$-equivariant isomorphism of~\eqref{Eq:TS}, we also have
\begin{gather*}
\varsigma^{-1}\big( \{x\}\big) = \biguplus_{I=1}^{n} {\rm Orb}_{\Sscript{\cH}}^{\Sscript{l}}\big(q_{i, x}^{-1}, u_{i, x}\big).
\end{gather*}
In this way, we obtain as above a decomposition of unital $A$-modules
\begin{gather}\label{Eq:AB}
AB1_{\Sscript{x}}=\Bbbk \vartheta^{-1}\big( \{x\}\big) = \bigoplus_{i=1}^{n} A q_{i, x}\qquad \text{and}\qquad 1_{\Sscript{x}}BA= \Bbbk \varsigma^{-1}\big( \{x\}\big) =\bigoplus_{i=1}^{n} q_{i, x}^{-1} A.
\end{gather}
Let us denote by $\fk{p}_{\Sscript{i, x}}\colon AB1_{\Sscript{x}} \to Aq_{\Sscript{i, x}}$ and $\fk{p}_{\Sscript{i, x}}'\colon 1_{\Sscript{x}}BA \to q_{\Sscript{i, x}}^{-1}A$ the canonical projections attached to the direct sums of~\eqref{Eq:AB}. Take $x$ to be of the form $\phio(u)$, for some $u \in \Ho$. Then the element $(u, \iota_{\Sscript{\phio(u)}}) \in \vartheta^{-1}(\{\phio(u)\})$ and $(\iota_{\Sscript{\phio(u)}}, u) \in \varsigma^{-1}(\{\phio(u)\})$. Thus, if $\Orbit{u, \iota_{\Sscript{\phio(u)}}}{H}$ is equal to some orbit of the form $\Orbit{u_{\Sscript{i_0, \phio(u)}}, q_{\Sscript{i_0, \phio(u)}}}{H}$, for some $i_0$. Thus, we can assume that at least one of the $q_{\Sscript{i, \phio(u)}}$'s is $\iota_{\Sscript{\phio(u)}}$. Henceforth, we set $q_{\Sscript{1, \phio(u)}} :=\iota_{\Sscript{\phio(u)}}$. Therefore, we obtain the following $A$-linear maps
\begin{gather*}
\fk{p}_{\Sscript{1, \phio(u)}}\colon \ AB1_{\Sscript{\phio(u)}} \to A 1_{\Sscript{u}}\qquad \text{and} \qquad \fk{p}_{\Sscript{1, \phio(u)}}'\colon \ 1_{\Sscript{\phio(u)}}BA \to 1_{\Sscript{\phio(u)}}A.
\end{gather*}
Specifically, for a given arrow $p \in\cG(\phio(u), \phio(v))$ with $u,v \in \Ho$, we have that $(v,p) \in \vartheta^{-1}(\{\phio(u)\})$, and the image of the element $ (1_{\Sscript{\phio(v)}} p) 1_{\Sscript{\phio(u)}} \in AB 1_{\Sscript{\phio(u)}}$ by the projection $\fk{p}_{\Sscript{1, \phio(u)}}$ is given by the rule
\begin{gather*}
\fk{p}_{\Sscript{1, \phio(u)}} = \begin{cases} h 1_{\Sscript{\phio(u)}}, & (v,p) = h . (u,\iota_{\Sscript{\phio(u)}}), \\ 0, & \text{ if } (v,p) \notin \Orbit{u, \iota_{\Sscript{\phio(u)}}}{H}. \end{cases}
\end{gather*}
Since $\upphi$ is a faithful functor, this $h$ is uniquely determined from the pair $(v,p)$. Furthermore, for any arrow $\alpha \in \cH(u',u)$, we have that $\fk{p}_{\Sscript{1,\phio(u')}} (p \phia(\alpha)) = \fk{p}_{\Sscript{1, \phio(u)}}(p) \alpha$, as we know that ${}^{\Sscript{\upphi}}\uU(\cG):=\Ho \due \times {\Sscript{\upphi_0}}{ \Sscript{t}} \Ga$ is an $(\cH,\cG)$-biset. As a consequence, we obtain a natural transformation
\begin{gather*}
\Sf{E}_{\Sscript{(u, v)}}\colon \ \cG(\phio(u), \phio(v)) \longrightarrow \Bbbk\cH(u,v), \qquad \big( p \longmapsto \fk{p}_{\Sscript{1, \phio(u)}}(p) \big).
\end{gather*}

\looseness=-1 On the other hand, if we take an element $b' \in \cG(\phio(u), x)$, for some $x \in \Go$ and $u \in \Ho$, then we can write $b'= \lambda_{\Sscript{1}} q_{\Sscript{1, x}}^{-1} a_{\Sscript{1}} + \cdots + \lambda_{\Sscript{n}} q_{\Sscript{n, x}}^{-1} a_{\Sscript{n}}$, where all the scalars $\lambda_{\Sscript{i}}$'s vanish except the one which correspond exactly to the orbit that contains $b$ and its value is $1_{\Sscript{\Bbbk}}$. Now, each of the element $q_{\Sscript{j, x}} b'$ has the image $\Sf{E}(q_{\Sscript{j, x}} b') = \lambda_{\Sscript{j}} a_{\Sscript{j}}$. Therefore, we have that $b' =\sum_j q_{\Sscript{j, x}}^{-1} \Sf{E}(q_{\Sscript{j, x}} b')$ as en element in the homogeneous component $\Bbbk \cG(\phi(u), x)$ of $B$. If we take now an element $b \in \cG(x,\phi(u))$, for some $x \in \Ga$ and $u \in \Ho$, then the same arguments will show that $ b = \sum_j \Sf{E}(bq_{\Sscript{j, x}}^{-1}) q_{\Sscript{j, x}}$ as an element in the component $\Bbbk \cG(x,\phi(u))$. In summary, we have shown condition~$(ii)$ of Theorem~\ref{thm:FM}, and thus $\upphi$ is a~Frobenius extension. The particular statement is now immediate, and this finishes the proof.
\end{proof}

The subsequent corollary is a direct consequence of Corollary \ref{coro:Finite} and Theorem \ref{thm:FM}, some of the implications stated there, can be deduced form \cite[Example~1.7] {Kadison:1999} and \cite[pp.~36, 39]{Kowalski:2014}. For the definition of Frobenius extension of algebras, we refer to \cite[Definition 1.1] {Kadison:1999}.

\begin{Corollary}\label{cor:FiniteI}Let $\upphi\colon H \to G$ be a monomorphism of groups, and consider the associated group algebra extension $\phi\colon A=\Bbbk H \to B=\Bbbk G$. Then the following are equivalent:
\begin{enumerate}[$1)$]\itemsep=0pt
\item $\upphi$ is a Frobenius extension;
\item the image of $H$ in $G$ is a finite-index subgroup;
\item $B/A$ is a Frobenius extension of algebras.
\end{enumerate}
In particular, all conditions hold true for any inclusion of finite groups.
\end{Corollary}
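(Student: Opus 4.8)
The plan is to specialize the two main results of the paper, Corollary~\ref{coro:Finite} and Theorem~\ref{thm:FM}, to the case of a groupoid with a single object, since a group is exactly a one-object groupoid and a group homomorphism is exactly a functor between such groupoids. First I would record the elementary reductions: because $H$ and $G$ have a single object, the object map $\upphi_{\Sscript{0}}$ is automatically injective, and $\upphi$ is a faithful functor precisely when the underlying homomorphism is a monomorphism, which is our hypothesis; moreover the associated rings $A=\Bbbk H$ and $B=\Bbbk G$ are the usual \emph{unital} group algebras, with $\phi\colon A\to B$ the classical algebra map. Thus both Corollary~\ref{coro:Finite} and Theorem~\ref{thm:FM} are applicable.

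For the equivalence $1)\Leftrightarrow 2)$ I would invoke Corollary~\ref{coro:Finite}. The only object of $\cG$ being $\ast$, the left pull-back biset of Example~\ref{exam:bisets} is $\pug=\Ho\due\times{\Sscript{\upphi_0}}{\Sscript{t}}\Ga\cong G$, and its structure map $\vartheta$ is constant, so $\vartheta^{-1}(\{\ast\})$ is all of $G$ equipped with the left $\cH$-action $h\cdot g=\phia(h)g=\upphi(h)g$. Hence the left $\cH$-orbits are exactly the right cosets $\upphi(H)g$, and the orbit set is in bijection with $\upphi(H)\backslash G$. Therefore $\vartheta^{-1}(\{\ast\})$ has finitely many orbits if and only if $[G:\upphi(H)]<\infty$, which turns condition $(b)$ of Corollary~\ref{coro:Finite} into condition $2)$ and so yields $1)\Leftrightarrow 2)$.

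For $1)\Leftrightarrow 3)$ I would specialize part $(iii)$ of Theorem~\ref{thm:FM}. With a single object $\Ho=\Go=\{\ast\}$ we have $1_{\Sscript{x}}=1_{\Sscript{u}}=1_{\Sscript{B}}$, $AB1_{\Sscript{x}}=\,{}_{\Sscript{A}}B$ and $A1_{\Sscript{u}}=A$, so $(iii)$ reads: ${}_{\Sscript{A}}B$ is finitely generated projective and there is a left $B$-linear isomorphism $B\cong\hom{A-}{B}{A}$. This is exactly the (left-handed) definition of a Frobenius extension of unital algebras in the sense of \cite[Definition~1.1]{Kadison:1999}, so $(i)\Leftrightarrow(iii)$ of Theorem~\ref{thm:FM} gives $1)\Leftrightarrow 3)$; alternatively one reads off the same conclusion from condition $(ii)$, whose specialization provides a Frobenius homomorphism $\Sf{E}\colon B\to A$ together with dual bases $\{b_{i}\},\{c_{i}\}$ satisfying the two Frobenius relations. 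Here I would use the groupoid symmetry $AB\cong BA$ recorded just before the theorem to pass freely between the left and right versions of the definition.

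The one point requiring genuine care---the main obstacle---is the bookkeeping in the identification of module structures when specializing $(iii)$ and $(ii)$: one must check that the naturality of $\Sf{E}$ in $\cH^{\Sscript{\rm op}}\times\cH$ becomes exactly $(A,A)$-bimodule linearity of the Frobenius homomorphism, and that the ``natural'' left $B$-linear isomorphism of $(iii)$ coincides with the bimodule isomorphism demanded in the classical definition. This is where I would compare with \cite[Example~1.7]{Kadison:1999} and \cite[pp.~36,~39]{Kowalski:2014}, which already record the classical equivalence $2)\Leftrightarrow 3)$ for group algebras and thus serve as a consistency check. Finally, the last assertion is immediate: for an inclusion of finite groups one has $[G:\upphi(H)]\le|G|<\infty$, so condition $2)$ always holds and hence all three conditions do.
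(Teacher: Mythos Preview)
Your proposal is correct and follows essentially the same approach as the paper, which states that the corollary is a direct consequence of Corollary~\ref{coro:Finite} and Theorem~\ref{thm:FM}; you have simply made explicit the specialization to the one-object case that the paper leaves to the reader. Your identification of the $\cH$-orbits with cosets and of condition~$(iii)$ with the classical Frobenius-extension definition is exactly what is needed.
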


\subsection*{Acknowledgements}
Research supported by the Spanish Ministerio de Econom\'{\i}a y Competitividad and the European Union FEDER, grant MTM2016-77033-P.
The authors would like to thanks the referees for their comments and suggestions, which help us to improve the earlier version of this paper. We are also grateful to Paolo Saracco for his careful reading and for his comments.

\pdfbookmark[1]{References}{ref}
\LastPageEnding


\begin{thebibliography}{99}
\footnotesize\itemsep=-1.0pt

\bibitem{Abrams:1983}
Abrams G.D., Morita equivalence for rings with local units, \href{https://doi.org/10.1080/00927878308822881}{\textit{Comm. Algebra}} \textbf{11} (1983), 801--837.

\bibitem{Anh/Marki:1983}
\'{A}nh P.N., M\'{a}rki L., Rees matrix rings, \href{https://doi.org/10.1016/0021-8693(83)90193-X}{\textit{J.~Algebra}} \textbf{81} (1983), 340--369.

\bibitem{Anh/Marki:1987}
\'{A}nh P.N., M\'{a}rki L., Morita equivalence for rings without identity,
 \href{https://doi.org/10.21099/tkbjm/1496160500}{\textit{Tsukuba~J. Math.}} \textbf{11} (1987), 1--16.

\bibitem{Bouc:2010}
Bouc S., Biset functors for finite groups, \textit{Lecture Notes in Math.},
 Vol. 1990, \href{https://doi.org/10.1007/978-3-642-11297-3}{Springer-Verlag}, Berlin, 2010.

\bibitem{Brown:1987}
Brown R., From groups to groupoids: a brief survey, \href{https://doi.org/10.1112/blms/19.2.113}{\textit{Bull. London Math.
 Soc.}} \textbf{19} (1987), 113--134.

\bibitem{DemGab:GATIGAGGC}
Demazure M., Gabriel P., Groupes alg\'{e}briques. {T}ome {I}:
 {G}\'{e}om\'{e}trie alg\'{e}brique, g\'{e}n\'{e}ralit\'{e}s, groupes
 commutatifs, Masson \& Cie, \'{E}diteur, Paris, North-Holland Publishing Co., Amsterdam, 1970.

\bibitem{ElKaoutit:2009}
El~Kaoutit L., Corings over rings with local units, \href{https://doi.org/10.1002/mana.200610767}{\textit{Math. Nachr.}}
 \textbf{282} (2009), 726--747, \href{https://arxiv.org/abs/math.RA/0512655}{arXiv:math.RA/0512655}.

\bibitem{ElKaoutit:2015}
El~Kaoutit L., On geometrically transitive {H}opf algebroids, \href{https://doi.org/10.1016/j.jpaa.2017.12.019}{\textit{J.~Pure
 Appl. Algebra}} \textbf{222} (2018), 3483--3520, \href{https://arxiv.org/abs/1508.05761}{arXiv:1508.05761}.

\bibitem{Kaoutit/Kowalzig:14}
El~Kaoutit L., Kowalzig N., Morita theory for {H}opf algebroids, principal
 bibundles, and weak equivalences, \href{https://doi.org/10.25537/dm.2017v22.551-609}{\textit{Doc. Math.}} \textbf{22} (2017),
 551--609, \href{https://arxiv.org/abs/1407.7461}{arXiv:1407.7461}.

\bibitem{Kaoutit/Spinosa:2018}
El~Kaoutit L., Spinosa L., Mackey formula for bisets over groupoids,
 \href{https://doi.org/10.1142/S0219498819501093}{\textit{J.~Algebra Appl.}}, {t}o appear, \href{https://arxiv.org/abs/1711.07203}{arXiv:1711.07203}.

\bibitem{Fuller:1976}
Fuller K.R., On rings whose left modules are direct sums of finitely generated
 modules, \href{https://doi.org/10.2307/2040744}{\textit{Proc. Amer. Math. Soc.}} \textbf{54} (1976), 39--44.

\bibitem{Gabriel:1962}
Gabriel P., Des cat\'{e}gories ab\'{e}liennes, \href{https://doi.org/10.24033/bsmf.1583}{\textit{Bull. Soc. Math. France}}
 \textbf{90} (1962), 323--448.

\bibitem{Guay/Hepburn:2009}
Guay A., Hepburn B., Symmetry and its formalisms: mathematical aspects,
 \href{https://doi.org/10.1086/600154}{\textit{Philos. Sci.}} \textbf{76} (2009), 160--178.

\bibitem{Higgins:1971}
Higgins P.J., Notes on categories and groupoids, \textit{Van Nostrand Rienhold
 Mathematical Studies}, Vol.~32, Van Nostrand Reinhold Co., London~-- New
 York~-- Melbourne, 1971.

\bibitem{Howe:1971} \looseness=-1
Howe R., On {F}robenius reciprocity for unipotent algebraic groups over~{$Q$},
 \href{https://doi.org/10.2307/2373455}{\textit{Amer.~J. Math.}} \textbf{93} (1971), 163--172.

\bibitem{Jelenc:2013}
Jelenc B., Serre fibrations in the {M}orita category of topological groupoids,
 \href{https://doi.org/10.1016/j.topol.2012.09.013}{\textit{Topology Appl.}} \textbf{160} (2013), 9--23, \href{https://arxiv.org/abs/1108.5284}{arXiv:1108.5284}.

\bibitem{Kadison:1999}
Kadison L., New examples of {F}robenius extensions, \textit{University Lecture
 Series}, Vol.~14, \href{https://doi.org/10.1090/ulect/014}{Amer. Math. Soc.}, Providence, RI, 1999.

\bibitem{Karoubi:1978}
Karoubi M., {$K$}-theory. An introduction, Classics in Mathematics,
 \href{https://doi.org/10.1007/978-3-540-79890-3}{Springer-Verlag}, Berlin~-- Heidelberg, 2008.

\bibitem{Kock:2003}
Kock J., Frobenius algebras and 2{D} topological quantum field theories,
 \textit{London Mathematical Society Student Texts}, Vol.~59, \href{https://doi.org/10.1017/CBO9780511615443}{Cambridge
 University Press}, Cambridge, 2003.

\bibitem{Kowalski:2014}
Kowalski E., An introduction to the representation theory of groups,
 \textit{Graduate Studies in Mathematics}, Vol.~155, Amer. Math. Soc.,
 Providence, RI, 2014.

\bibitem{Landsman:2005}
Landsman N.P., Lie groupoids and {L}ie algebroids in physics and noncommutative
 geometry, \href{https://doi.org/10.1016/j.geomphys.2005.04.005}{\textit{J.~Geom. Phys.}} \textbf{56} (2006), 24--54,
 \href{https://arxiv.org/abs/math-ph/0506024}{arXiv:math-ph/0506024}.

\bibitem{MacLane}
Mac~Lane S., Categories for the working mathematician, 2nd ed., \textit{Graduate Texts
 in Mathematics}, Vol.~5, \href{https://doi.org/10.1007/978-1-4757-4721-8}{Springer-Verlag}, New York, 1998.

\bibitem{Mackenzie:2005}
Mackenzie K.C.H., General theory of {L}ie groupoids and {L}ie algebroids,
 \textit{London Mathematical Society Lecture Note Series}, Vol.~213, \href{https://doi.org/10.1017/CBO9781107325883}{Cambridge
 University Press}, Cambridge, 2005.

\bibitem{Mackey:1953}
Mackey G.W., Induced representations of locally compact groups. {II}.~{T}he
 {F}robenius reciprocity theorem, \href{https://doi.org/10.2307/1969786}{\textit{Ann. of Math.}} \textbf{58} (1953),
 193--221.

\bibitem{Mitchell:1965}
Mitchell B., Theory of categories, \textit{Pure and Applied Mathematics},
 Vol.~17, Academic Press, New York~-- London, 1965.

\bibitem{Moedijk/Mrcun:2005}
Moerdijk I., Mr\v{c}un J., Lie groupoids, sheaves and cohomology, in Poisson
 Geometry, Deformation Quan\-ti\-sation and Group Representations, \textit{London
 Math. Soc. Lecture Note Ser.}, Vol.~323, \href{https://doi.org/10.1017/CBO9780511734878}{Cambridge University Press},
 Cambridge, 2005, 145--272.

\bibitem{MooreC:1962}
Moore C.C., On the {F}robenius reciprocity theorem for locally compact groups,
 \href{https://doi.org/10.2140/pjm.1962.12.359}{\textit{Pacific~J. Math.}} \textbf{12} (1962), 359--365.

\bibitem{Pysiak:2011}
Pysiak L., Imprimitivity theorem for groupoid representations,
 \href{https://doi.org/10.1515/dema-2013-0294}{\textit{Demonstratio Math.}} \textbf{44} (2011), 29--48, \href{https://arxiv.org/abs/1008.2084}{arXiv:1008.2084}.

\bibitem{Renault:1980}
Renault J., A groupoid approach to {$C^{\ast} $}-algebras, \textit{Lecture
 Notes in Math.}, Vol.~793, \href{https://doi.org/10.1007/BFb0091072}{Springer}, Berlin, 1980.

\bibitem{Sternberg:1994}
Sternberg S., Group theory and physics, Cambridge University Press, Cambridge, 1994.

\bibitem{Weinstein:1996}
Weinstein A., Groupoids: unifying internal and external symmetry. {A} tour
 through some examples, \textit{Notices Amer. Math. Soc.} \textbf{43} (1996),
 744--752, \href{https://arxiv.org/abs/math.RT/9602220}{arXiv:math.RT/9602220}.

\end{thebibliography}
\end{document}